\documentclass[review]{elsarticle}
\usepackage[usenames]{color}
\usepackage{lineno,hyperref}

\journal{Journal of \LaTeX\ Templates}

\usepackage{geometry}
\usepackage{amsfonts}
\usepackage{amsmath, cases}
\usepackage{indentfirst}
\usepackage{graphicx}
\usepackage{latexsym,bm, amsthm}
\usepackage{epsfig}
\usepackage{latexsym}
\usepackage{amssymb}\numberwithin{equation}{section}
\newtheorem{theorem}{T{\scriptsize HEOREM}}[section]

\newtheorem{definition}{D{\scriptsize  EFINITION}}[section]

\newtheorem{example}{E{\scriptsize  XAMPLE}}[section]

\bibliographystyle{elsarticle-num}

\begin{document}

\begin{frontmatter}
\title{Characterizations and properties of  dual matrix star  orders}

\author[mymainaddress]{Hongxing Wang}
\ead{winghongxing0902@163.com}

\author[mymainaddress]{Pei Huang}
\ead{Huangpei199904@163.com}

\address[mymainaddress]{College of Mathematics and Physics,
Guangxi Key Laboratory of Hybrid Computation and IC Design Analysis,
Guangxi Minzu University,
Nanning 530006, China}

\begin{abstract}
In this paper, we  introduce  D-star order, T-star order and  P-star order on the class of dual matrices. By applying matrix decomposition and dual generalized inverses, we  discuss  properties, characterizations and relations among these orders, and illustrate their relations  with examples.
\end{abstract}

\begin{keyword}
Dual generalized inverse;
D-star partial order;
P-star partial order;
Moore-Penrose dual generalized inverse;
Dual Moore-Penrose generalized inverse
\MSC[2010]  15A09\sep15A66\sep 06A06\sep 15A24
\end{keyword}
\end{frontmatter}


\section{Introduction}
\newcommand{\rk}{\rm{rk}}
\newcommand{\Ind}{{\rm{Ind}}}
\newcommand{\CM}{\tiny\mbox{\rm{CM}}}

In this paper, we adopt the following notations.
The symbol $\mathbb{R}^{m\times n}$
 denotes the set of all $m\times n$ real matrices.
$A^T$  and ${\rk}(A)$
denote the
transpose and rank of $A\in\mathbb{R}^{m\times n}$, respectively.
The Moore-Penrose inverse of $A\in\mathbb{R}^{m\times n}$ is
defined as the unique matrix $X\in\mathbb{R}^{n\times m}$
satisfying the Penrose equations:
$ AXA=A$, $XAX=X$,
$\left(AX\right)^\ast=AX$ and $\left(XA\right)^\ast=XA$,
and is usually denoted by $X=A^{\dag}$ \cite{R. Penrose 1955}.
Denote an $m\times n$  dual matrix by $\widehat  A=A+\varepsilon A_0$,
in which
$A$ and $A_0$ are all $m\times n$ real matrices,
and
$\varepsilon$ is the dual unit satisfying
$\varepsilon\neq0,\;0\varepsilon=\varepsilon0,\;
1\varepsilon=\varepsilon1=\varepsilon$,
$\varepsilon^2=0$.
Furthermore,
$\widehat A^{ T}$ denotes the  transpose of $\widehat  A$, that is, $A^{ T}=A^{ T}+\varepsilon A_0^{ T}$.
$\mathbb{D}^{m\times n}$ denotes  the set of all $m\times n$ dual matrices.

Dual matrices have been commonly used in various fields of science and engineering,
such as the kinematic analysis  synthesis of machines and mechanisms, robotics
and machine vision\cite{Udwadia F E.X 2021}.
Recently,  dual generalized inverses attracted much attention.
 Many researchers have acquired fruitful findings
 \cite{Belzile B X2019,Belzile B2020,Stefanelli R E2007,Valentini P P 2018}.
Let $\widehat  A=A+\varepsilon A_0\in\mathbb{D}^{m\times n}$, then
the   Moore-Penrose dual generalized inverse (MPDGI for short)
of  $\widehat A$ is denoted by $\widehat  A^{p}$  \cite{Stefanelli R E2007} and  displayed in the form
\begin{align}
\label{MPDGI-Def}
\widehat  A^{p}=A^{\dag}-\varepsilon A^{\dag}A_0A^{\dag}.
\end{align}
Obviously, every  dual matrix has MPDGI.
Pennestr\`{\i} et al.\cite{Valentini P P 2018}
 propose  novel and computationally efficient algorithms(formulas) for the
computation of the MPDGI.

If there exists a unique matrix $\widehat X\in\mathbb{D}^{n\times m}$ satisfying the Penrose equations:
\begin{align}
\label{DMPGI-Def}
\left(\widehat{1}\right)\;\widehat A\widehat X\widehat A=\widehat A, \
\left(\widehat{2}\right)\;\widehat X\widehat A\widehat X=\widehat X,\
\left(\widehat{3}\right)\;\widehat A\widehat X=\left(\widehat A\widehat X\right)^{  T},\
\left(\widehat{4}\right)\;\widehat X\widehat A=\left(\widehat X\widehat A\right)^{  T}
\end{align}
then $\widehat X$ is the dual Moore-Penrose generalized inverse (DMPGI for short)
 of $\widehat A$ \cite{Udwadia F E2020},
and denoted  by $\widehat X=\widehat A^{\dag}$.
Udwadia \cite{Udwadia F E2020}
shows  that not all  dual
matrices  have DMPGIs,
and gets some interesting properties of   DMPGI.
 Wang \cite{Wang H.2021} gives a compact formula for DMPGI. He also puts forward some  necessary and sufficient conditions for a
dual matrix to have  DMPGI.
These theories should be main tools to carry out studies on dual matrix partial order  in this paper.

Dual generalized inverses is a powerful tool to study the 
least-squares solutions to systems of linear dual equations \cite{Udwadia F E.2021}.
For example,
Belzile \cite{Belzile B X2019,Belzile B2020}
uses  dual generalized inverses,
the characteristic length and Householder reflections over the dual ring
to investigate  problems of both translation
and rotation in the realm of kinematic synthesis.
These applications provide the impetus for the in-depth study on  dual generalized inverse theory.

It is well known that an important application of generalized inverse is to study  matrix partial order theory,
such as   characterizations  and representations   of
star, sharp, core   and minus
partial orders \cite{Baksalary O M 2010,Hartwig R E1980,Mitra S K 2010}.
Matrix partial order theory can be applied
to solve optimization problems like the minimization of production costs in statistics \cite{Golubic I 2020}.
The theory is also used to study
 autonomous linear systems and control system problems \cite{Coll C2020,Herrero A2020}.

Abundant theories of dual generalized inverses provide
a sufficient basis for carrying out researchs on
dual matrix order theory and practice.
Because the dual matrix structure is special,
it makes DMPGI and MPDGI are closely related but they are different in essence.
These differences provide a basis
for conducting studies on dual matrix partial order
to obtain rich and interesting  results.
The expectant research results of dual matrix order will be more diversified.
For example,
we can use the transpose of real  matrices or Moore-Penrose inverse to characterize star partial order,
 but we cannot get similar results in the dual matrix partial order.
Since the existence of DMPGI has strict conditions,
but MPDGI always exists.
Therefore, both of the dual binary relations are not equivalent.
Next,
we will investigate star order of dual matrices.
The theoretical results will also provide a theoretical basis for linear systems of  dual equations.

The outline of this paper is as follows.
In Section \ref{Preliminaries},
we briefly review some preliminaries.
In Section \ref{Sect-3-D-Star-Partial-Order},
when DMPGIs of dual matrices exist,
we introduce the D-star order of dual matrices,
and give some necessary and sufficient conditions for the existence of D-star order.
Furthermore,we
  prove that it is a partial order
  and
derive characterizations and properties of
the   partial order
by applying matrix decomposition.
In Section \ref{Sect-4-P-Star-Partial-Order},
we present
a new binary relation(P-order)
by applying  MPDGIs.
When  DMPGIs of dual matrices  exist,
it is shown that the new binary relation is partial order
is called P-star  partial order.
In Section
\ref{Sect-5-Relationships-Star-Partial-Order},
we consider relations between
D-star partial order and P-star  partial order,
 and
  give examples to illustrate their differences and connections.

\section{Preliminaries}
\label{Preliminaries}
In this section, we give some basic theories for further research,
such as
the singular value decomposition (SVD for short) of real matrix,
 characterizations of star partial order and DMPGI and so on.

\begin{theorem}
[SVD]
\label{SVD}
Let $A \in \mathbb{R}^{m\times n}$ and ${\rk}\left( A \right)=a$.
Then
 there exist orthogonal matrices
$U \in \mathbb{R}^{m\times m}$
and
$V \in \mathbb{R}^{n\times n}$
such that
\begin{align}
A=U\left(\begin{matrix}
   T_1    &0
\\ 0         &0
\end{matrix}\right) V^T , \nonumber
\end{align}
where $T_1 \in \mathbb{R}^{a\times a}$  is a diagonal positive definite matrix.
\end{theorem}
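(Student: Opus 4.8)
The plan is to obtain the decomposition from the spectral theorem for symmetric matrices applied to $A^{T}A$. First I would observe that $A^{T}A\in\mathbb{R}^{n\times n}$ is symmetric and positive semidefinite and that $\text{rk}(A^{T}A)=\text{rk}(A)=a$. By the spectral theorem there is an orthogonal matrix $V\in\mathbb{R}^{n\times n}$ diagonalizing $A^{T}A$; ordering the eigenvalues so that the $a$ positive ones appear first, I would write $V^{T}A^{T}AV=\mathrm{diag}(\sigma_{1}^{2},\dots,\sigma_{a}^{2},0,\dots,0)$ with every $\sigma_{i}>0$. Then I would set $T_{1}=\mathrm{diag}(\sigma_{1},\dots,\sigma_{a})$, which is diagonal and positive definite, and partition $V=[\,V_{1}\ \ V_{2}\,]$ with $V_{1}\in\mathbb{R}^{n\times a}$.

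Next I would construct the left factor. Define $U_{1}:=AV_{1}T_{1}^{-1}\in\mathbb{R}^{m\times a}$. Using $V_{1}^{T}A^{T}AV_{1}=T_{1}^{2}$ one checks $U_{1}^{T}U_{1}=T_{1}^{-1}V_{1}^{T}A^{T}AV_{1}T_{1}^{-1}=I_{a}$, so the columns of $U_{1}$ are orthonormal; I would extend them to an orthonormal basis of $\mathbb{R}^{m}$, obtaining an orthogonal matrix $U=[\,U_{1}\ \ U_{2}\,]\in\mathbb{R}^{m\times m}$.

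Then I would verify the four blocks of $U^{T}AV$. From $V_{2}^{T}A^{T}AV_{2}=0$ it follows that $\|Av\|^{2}=v^{T}A^{T}Av=0$ for every column $v$ of $V_{2}$, hence $AV_{2}=0$; this makes the whole second block-column of $U^{T}AV$ vanish. For the first block-column, $U_{1}^{T}AV_{1}=T_{1}^{-1}V_{1}^{T}A^{T}AV_{1}=T_{1}^{-1}T_{1}^{2}=T_{1}$ and $U_{2}^{T}AV_{1}=U_{2}^{T}U_{1}T_{1}=0$ since $U_{2}^{T}U_{1}=0$. Therefore $U^{T}AV=\left(\begin{matrix}T_{1}&0\\0&0\end{matrix}\right)$, and left-multiplying by $U$ and right-multiplying by $V^{T}$ gives the asserted identity.

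The only point needing care — and the one I would state most carefully — is the rank bookkeeping: that the number of positive eigenvalues of $A^{T}A$ is exactly $a$, which rests on $\text{rk}(A^{T}A)=\text{rk}(A)$, together with the deduction $AV_{2}=0$ from $V_{2}^{T}A^{T}AV_{2}=0$. Everything else is routine once the spectral theorem is in hand. Alternatively one could run the symmetric argument starting from $AA^{T}$ to produce $U$ first, or argue by induction, successively extracting a unit vector maximizing $\|Ax\|$; the $A^{T}A$ route seems the most economical for the form required here.
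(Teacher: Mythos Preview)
Your argument is correct and is the standard spectral-theorem route to the SVD. Note, however, that the paper does not actually prove this statement: Theorem~\ref{SVD} appears in the Preliminaries section as a known background result and is stated without proof. So there is no ``paper's own proof'' to compare against; your write-up supplies a valid derivation where the paper simply cites the classical fact.
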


\begin{theorem}[\cite{Baksalary J K2003,Mitra S K 2010}]
\label{RStar-Def}
Let $A,B\in\mathbb{R}^{m\times n}$,
${\rk}\left( A \right)=a$,
 ${\rk}\left( B \right)=b$
 and
 $b> a$.
Then the following four statements are equivalent:
\begin{enumerate}
  \item[{\rm (1)}]
$A\overset\ast\leqslant B$;

  \item[{\rm (2)}]
$A^\dagger A=A^\dagger B$
and
$ AA^\dagger=BA^\dagger $;

  \item[{\rm (3)}]
 $A^{  T} A=A^{  T} B$
and
$AA^{  T}=BA^{  T}$;

  \item[{\rm (4)}]
There exist orthogonal matrices $U$ and $V$ such that
\begin{align}
\label{2.1}
A=U\begin{pmatrix}T_1&0&0\\0&0&0\\0&0&0\end{pmatrix}V^T, \
B=U\begin{pmatrix}T_1&0&0\\0&T_2&0\\0&0&0\end{pmatrix}V^T,
\end{align}
where
$T_1 \in \mathbb{R}^{a\times a}$
and
$T_2 \in \mathbb{R}^{(b-a)\times (b-a)}$
are diagonal positive definite matrices.
\end{enumerate}
\end{theorem}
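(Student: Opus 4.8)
The plan is to route the four equivalences through the two matrix identities in statement~(3). Under the customary definition of the star order for real matrices, statement~(1) is either the pair $A^{T}A=A^{T}B$, $AA^{T}=BA^{T}$ of~(3) or the pair in~(2), and these two pairs are equivalent to each other; so it is enough to prove (3)$\Leftrightarrow$(2) and (3)$\Leftrightarrow$(4), and for the second of these I would establish (3)$\Rightarrow$(4) and (4)$\Rightarrow$(3) separately.

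The heart of the argument is (3)$\Rightarrow$(4). First apply Theorem~\ref{SVD} to $A$, obtaining orthogonal $U_{1},V_{1}$ with $A=U_{1}\left(\begin{smallmatrix}T_{1}&0\\0&0\end{smallmatrix}\right)V_{1}^{T}$ and $T_{1}\in\mathbb{R}^{a\times a}$ diagonal positive definite, and write $B$ in the same coordinates as $B=U_{1}\left(\begin{smallmatrix}B_{11}&B_{12}\\B_{21}&B_{22}\end{smallmatrix}\right)V_{1}^{T}$ with $B_{11}\in\mathbb{R}^{a\times a}$. Substituting into $A^{T}A=A^{T}B$ gives $T_{1}^{2}=T_{1}B_{11}$ and $T_{1}B_{12}=0$, hence $B_{11}=T_{1}$ and $B_{12}=0$ since $T_{1}$ is invertible; likewise $AA^{T}=BA^{T}$ forces $B_{21}=0$. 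Thus $B=U_{1}\,\mathrm{diag}(T_{1},B_{22})\,V_{1}^{T}$, and ${\rk}(B)=b$ yields ${\rk}(B_{22})=b-a$. Now apply Theorem~\ref{SVD} to $B_{22}\in\mathbb{R}^{(m-a)\times(n-a)}$, obtaining orthogonal $U_{2},V_{2}$ and a diagonal positive definite $T_{2}\in\mathbb{R}^{(b-a)\times(b-a)}$ with $B_{22}=U_{2}\left(\begin{smallmatrix}T_{2}&0\\0&0\end{smallmatrix}\right)V_{2}^{T}$. Put $U=U_{1}\,\mathrm{diag}(I_{a},U_{2})$ and $V=V_{1}\,\mathrm{diag}(I_{a},V_{2})$; these are orthogonal, and since $\mathrm{diag}(I_{a},U_{2})$ and $\mathrm{diag}(I_{a},V_{2})$ act only on the zero blocks of $A$, they leave its normal form intact. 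Reading off the resulting block shapes, with row blocks of sizes $a,\,b-a,\,m-b$ and column blocks $a,\,b-a,\,n-b$, one recovers exactly~\eqref{2.1}.

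The other directions are short. For (4)$\Rightarrow$(3), multiply out the block forms: $A^{T}A$ and $A^{T}B$ both equal $V\,\mathrm{diag}(T_{1}^{2},0,0)\,V^{T}$, while $AA^{T}$ and $BA^{T}$ both equal $U\,\mathrm{diag}(T_{1}^{2},0,0)\,U^{T}$, because $A^{T}$, respectively $A$, annihilates exactly those block rows, respectively columns, in which $A$ and $B$ differ. For (3)$\Leftrightarrow$(2), use the identities $R(A^{\dag})=R(A^{T})$ and $N(A^{\dag})=N(A^{T})$ (immediate from the SVD), rewriting each equality as an inclusion of a range into a null space: $A^{T}A=A^{T}B\Leftrightarrow R(B-A)\subseteq N(A^{T})=N(A^{\dag})\Leftrightarrow A^{\dag}A=A^{\dag}B$, and $AA^{T}=BA^{T}\Leftrightarrow R(A^{T})\subseteq N(B-A)\Leftrightarrow AA^{\dag}=BA^{\dag}$ (the last step using $R(A^{\dag})=R(A^{T})$).

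I expect the only real difficulty to be bookkeeping in (3)$\Rightarrow$(4): one must make the two applications of the SVD compatible, so that the second diagonalization touches only the trailing blocks and does not disturb the form already fixed for $A$, and one must keep every zero block at the size dictated by ${\rk}(A)=a<b={\rk}(B)$. The hypothesis $b>a$ is precisely what guarantees that the block $T_{2}$ actually occurs; when $a=b$ the relation degenerates to $A=B$.
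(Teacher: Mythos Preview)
The paper does not prove Theorem~\ref{RStar-Def}: it is stated in the Preliminaries section as a known result, with citations to Baksalary--Baksalary--Liu and to Mitra--Bhimasankaram--Malik, and no proof is given. So there is no ``paper's own proof'' to compare your proposal against.

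That said, your argument is correct and is essentially the standard route to this characterization. The block-reduction in (3)$\Rightarrow$(4) via two successive SVDs is the usual way the simultaneous normal form~\eqref{2.1} is obtained, and your care in letting the second SVD act only on the trailing blocks so as not to disturb the form of $A$ is exactly the point that needs checking. The equivalences (4)$\Rightarrow$(3) and (3)$\Leftrightarrow$(2) are routine and your justifications are fine; in particular the null-space/range argument for (3)$\Leftrightarrow$(2) is clean and avoids any unnecessary computation. Your closing remark about the role of the hypothesis $b>a$ is also accurate: it guarantees a nontrivial $T_{2}$ block, while the borderline case $a=b$ collapses to $A=B$.
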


\begin{theorem}
[\cite{Wang H.2021} Theorem 2.1]
\label{lemma-2.1}
Let $\widehat A=A+\varepsilon A_0 \in \mathbb{D}^{m\times n}$.
Then the following conditions are equivalent:
\begin{description}
  \item$\left(1\right)$
  The DMPGI $\widehat A^{\dag}$ of $\widehat A$ exists;

  \item$\left(2\right)$
  $\left( I_m-AA^{\dag} \right) A_0 \left( I_n-A^{\dag}A \right)=0$;

  \item$\left(3\right)$
  \
  ${\rk}\left(\begin{matrix}
  A_0  &A\\ A  &0
  \end{matrix}\right)
  =2{\rk}\left( A \right)$.
\end{description}
If the DMPGI $\widehat A^{\dag}$ of $\widehat A$ exists, then
\begin{align}
\label{eq-2.1}
\widehat A ^{\dag}=A ^{\dag}+\varepsilon R,
\end{align}
where
$R=-A^{\dag} A_0 A^{\dag}+\left( A^T A \right)^{\dag} A_0^T \left( I_m-A A^{\dag} \right)
  + \left( I_n-A^{\dag} A \right) A_0^T \left( A A^T \right)^{\dag}.$

Furthermore,
let the SVD of $A$ be as shown in Theorem \ref{SVD},
then
\begin{align}
\label{2.2}
\widehat A
&
=
U\begin{pmatrix}T_1&0\\0&0\end{pmatrix}V^{  T}+\varepsilon U\begin{pmatrix}A_1&A_2\\A_3&0\end{pmatrix}V^{  T},
\\
\widehat  A^{\dag}
&
=
V\begin{pmatrix}T_1^{-1}&0\\0&0\end{pmatrix} U^{  T}
+\varepsilon V\begin{pmatrix}
-T_1^{-1}A_1T_1^{-1}&T_1^{-2}A_3^{  T}\\
A_2^{  T}T_1^{-2}&0
\end{pmatrix}U^{  T},
\end{align}
where $T_1$ is a diagonal positive definite matrix.
\end{theorem}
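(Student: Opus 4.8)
The plan is to reduce the four dual Penrose equations $(\widehat 1)$--$(\widehat 4)$ to systems of ordinary real matrix equations. Writing $\widehat X=X+\varepsilon X_0$, expanding, and collecting coefficients of $\varepsilon^0$ and $\varepsilon^1$ (using $\varepsilon^2=0$), the $\varepsilon^0$ parts are precisely $AXA=A$, $XAX=X$, $AX=(AX)^{T}$, $XA=(XA)^{T}$, so they force $X=A^{\dag}$; hence the real part of a DMPGI, if it exists, must be $A^{\dag}$. Substituting $X=A^{\dag}$ into the $\varepsilon^1$ parts leaves four linear matrix equations in the single unknown $X_0$:
\begin{align}
A X_0 A &= (I_m-AA^{\dag})A_0-A_0A^{\dag}A,\nonumber\\
A^{\dag}AX_0+X_0AA^{\dag}-X_0 &= -A^{\dag}A_0A^{\dag},\nonumber\\
AX_0+A_0A^{\dag} &= \left(AX_0+A_0A^{\dag}\right)^{T},\nonumber\\
X_0A+A^{\dag}A_0 &= \left(X_0A+A^{\dag}A_0\right)^{T}.\nonumber
\end{align}
Thus existence and uniqueness of $\widehat A^{\dag}$ is equivalent to existence and uniqueness of a common solution $X_0$ of this (over-determined) system, and the DMPGI is then $A^{\dag}+\varepsilon X_0$.

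To handle the system I would diagonalize via the SVD of Theorem \ref{SVD}: write $A=U\begin{pmatrix}T_1&0\\0&0\end{pmatrix}V^{T}$, so $A^{\dag}=V\begin{pmatrix}T_1^{-1}&0\\0&0\end{pmatrix}U^{T}$, and introduce the conformal partitions $A_0=U\begin{pmatrix}A_1&A_2\\A_3&A_4\end{pmatrix}V^{T}$ and $X_0=V\begin{pmatrix}B_1&B_2\\B_3&B_4\end{pmatrix}U^{T}$ (not assuming $A_4=0$ yet). Multiplying out, each of the four equations becomes a small set of block identities. The first equation becomes $T_1B_1T_1=-A_1$ together with $A_4=0$; since $(I_m-AA^{\dag})A_0(I_n-A^{\dag}A)=U\begin{pmatrix}0&0\\0&A_4\end{pmatrix}V^{T}$, the condition $A_4=0$ is exactly condition $(2)$, which proves $(1)\Rightarrow(2)$. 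Conversely, assuming $A_4=0$, the block identities coming from $(\widehat 3)$, $(\widehat 4)$, $(\widehat 2)$ successively and uniquely determine $B_1=-T_1^{-1}A_1T_1^{-1}$, $B_2=T_1^{-2}A_3^{T}$, $B_3=A_2^{T}T_1^{-2}$, $B_4=0$, with no residual constraint; so a unique $X_0$ exists, which gives $(2)\Rightarrow(1)$ and, read back through $U$ and $V$, yields the SVD forms \eqref{2.2}. Finally, substituting the SVD into the claimed closed form $R=-A^{\dag}A_0A^{\dag}+(A^{T}A)^{\dag}A_0^{T}(I_m-AA^{\dag})+(I_n-A^{\dag}A)A_0^{T}(AA^{T})^{\dag}$ and simplifying recovers exactly this $X_0$, establishing \eqref{eq-2.1}.

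For $(2)\Leftrightarrow(3)$ I would compute the rank of the bordered matrix directly. Conjugating $\begin{pmatrix}A_0&A\\A&0\end{pmatrix}$ by $\begin{pmatrix}U^{T}&0\\0&U^{T}\end{pmatrix}$ on the left and $\begin{pmatrix}V&0\\0&V\end{pmatrix}$ on the right replaces $A_0$ and $A$ by their SVD blocks; permuting block rows and columns so that the $a$-dimensional pieces come first produces a matrix whose leading $2a\times2a$ corner is the invertible matrix $\begin{pmatrix}A_1&T_1\\T_1&0\end{pmatrix}$. Using $T_1$ (which occurs alone in one block row) to sweep out $A_1$ and $A_3$, and then using it again to sweep out $A_2$, brings the matrix to an essentially block-diagonal form whose only nonzero blocks are $T_1$, $T_1$ and $A_4$. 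Hence $\rk\begin{pmatrix}A_0&A\\A&0\end{pmatrix}=2\,\rk(A)+\rk(A_4)$, so $(3)$ holds iff $A_4=0$, i.e. iff $(2)$ holds, which closes the cycle of equivalences.

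The $\varepsilon$-expansions, the block multiplications, and the check that the closed form equals $X_0$ are all routine. The only steps needing care are the bookkeeping in the SVD argument — one must process the four block equations in the right order so that each of $B_1,\dots,B_4$ is pinned down and so that one \emph{checks}, rather than assumes, that no equation is left unsatisfied — and the row/column permutation in the rank computation. I expect the conceptual heart of the argument, rather than a genuine obstacle, to be the single observation that the one real equation $A_4=0$, equivalently $(I_m-AA^{\dag})A_0(I_n-A^{\dag}A)=0$, is at once necessary for solvability of the $(\widehat 1)$-equation and sufficient for the whole over-determined system to admit a (unique) solution.
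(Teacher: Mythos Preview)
The paper does not actually prove this theorem: it is quoted verbatim from \cite{Wang H.2021} in the Preliminaries section and no argument is supplied, so there is nothing to compare your proposal against in the present paper. That said, your proposal is correct and is essentially the natural proof one would expect in the cited source: expand the four dual Penrose equations in $\varepsilon$, observe that the $\varepsilon^{0}$ layer forces $X=A^{\dag}$, pass to SVD coordinates, and read off that the $\varepsilon^{1}$ layer is uniquely solvable iff the $(2,2)$ block $A_4$ of $A_0$ vanishes; the rank identity $\rk\begin{pmatrix}A_0&A\\A&0\end{pmatrix}=2\,\rk(A)+\rk(A_4)$ then closes the equivalence. Your bookkeeping is sound, including the check that equations $(\widehat 3)$ and $(\widehat 4)$ impose no extra symmetry constraint on $B_1$ once $B_1=-T_1^{-1}A_1T_1^{-1}$ is fixed by $(\widehat 1)$, and that $(\widehat 2)$ both confirms $B_1$ and forces $B_4=0$.
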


\begin{theorem}
[\cite{Wang H.2021}]
\label{MPDGI-Cha}
Let $\widehat  A=A+\varepsilon A_0$.
Then MPDGI of $\widehat  A$, i.e. ${\widehat A}^{p}$ always exists,
and there exist orthogonal matrices $U$ and $V$ such that
\begin{align}
\widehat A
&
=
U\begin{pmatrix}T_1&0\\0&0\end{pmatrix}V^{  T}+\varepsilon U\begin{pmatrix}A_1&A_2\\A_3&A_4\end{pmatrix}V^{  T},
\\
\widehat A^p
&
=
V\begin{pmatrix}T_1^{-1}&0\\0&0\end{pmatrix}  U^{  T}
+
\varepsilon V
\begin{pmatrix}
-T_1^{-1}A_1T_1^{-1}&0\\
0&0
\end{pmatrix}  U^{  T},
\end{align}
where $T_1$ is a diagonal positive definite matrix.
\end{theorem}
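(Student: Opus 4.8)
The plan is to reduce everything to the singular value decomposition of the real part $A$ together with the explicit formula \eqref{MPDGI-Def} for the MPDGI. Since every $A\in\mathbb{R}^{m\times n}$ possesses a Moore--Penrose inverse $A^{\dag}$, the dual matrix $A^{\dag}-\varepsilon A^{\dag}A_0A^{\dag}$ is well defined for arbitrary $A$ and $A_0$; hence $\widehat A^{p}$ exists unconditionally, and it only remains to bring this expression into the stated block form.

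First I would invoke Theorem \ref{SVD} to write $A=U\left(\begin{smallmatrix}T_1&0\\0&0\end{smallmatrix}\right)V^{T}$ with $U,V$ orthogonal and $T_1\in\mathbb{R}^{a\times a}$ diagonal positive definite, where $a=\rk(A)$. Partitioning $U^{T}A_0V$ conformably into blocks of sizes matching those of $\left(\begin{smallmatrix}T_1&0\\0&0\end{smallmatrix}\right)$, I define $A_1,A_2,A_3,A_4$ by $U^{T}A_0V=\left(\begin{smallmatrix}A_1&A_2\\A_3&A_4\end{smallmatrix}\right)$, so that $A_0=U\left(\begin{smallmatrix}A_1&A_2\\A_3&A_4\end{smallmatrix}\right)V^{T}$ and therefore $\widehat A$ has exactly the displayed form.

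Next I would use the standard consequence of the SVD that $A^{\dag}=V\left(\begin{smallmatrix}T_1^{-1}&0\\0&0\end{smallmatrix}\right)U^{T}$. Substituting this and the block form of $A_0$ into \eqref{MPDGI-Def}, the orthogonality relations $U^{T}U=I_m$ and $V^{T}V=I_n$ collapse the interior products, yielding
\[
A^{\dag}A_0A^{\dag}
=V\begin{pmatrix}T_1^{-1}&0\\0&0\end{pmatrix}\begin{pmatrix}A_1&A_2\\A_3&A_4\end{pmatrix}\begin{pmatrix}T_1^{-1}&0\\0&0\end{pmatrix}U^{T}
=V\begin{pmatrix}T_1^{-1}A_1T_1^{-1}&0\\0&0\end{pmatrix}U^{T},
\]
so that $\widehat A^{p}=A^{\dag}-\varepsilon A^{\dag}A_0A^{\dag}$ is precisely the asserted matrix.

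I do not expect a genuine obstacle beyond careful bookkeeping of the block sizes in the conformal partition. The one point worth an explicit line of comment is that the $(2,2)$ block $A_4$ of the dual part is annihilated on both sides by $\left(\begin{smallmatrix}T_1^{-1}&0\\0&0\end{smallmatrix}\right)$ and hence contributes nothing to $\widehat A^{p}$; this is in sharp contrast with the DMPGI of Theorem \ref{lemma-2.1}, and it is exactly why the MPDGI exists for every dual matrix whereas the DMPGI does not.
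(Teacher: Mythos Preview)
Your argument is correct: the SVD of $A$ together with the defining formula \eqref{MPDGI-Def} gives the result immediately, and your block computation of $A^{\dag}A_0A^{\dag}$ is accurate.

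Note, however, that the paper does not actually supply its own proof of this statement; Theorem~\ref{MPDGI-Cha} appears in Section~\ref{Preliminaries} as a preliminary result quoted from \cite{Wang H.2021}, so there is no in-paper argument to compare against. Your proof is the natural one and would be essentially what any direct verification must do: write $A$ in SVD form, partition $A_0$ conformably, and observe that the outer factors $\left(\begin{smallmatrix}T_1^{-1}&0\\0&0\end{smallmatrix}\right)$ kill every block of $U^{T}A_0V$ except the $(1,1)$ block. Your closing remark contrasting this with Theorem~\ref{lemma-2.1} is apt and worth keeping.
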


\section{D-Star Partial Order}
\label{Sect-3-D-Star-Partial-Order}

The  DMPGI satisfies  Penrose equations
and
is closely related to Moore-Penrose generalized inverse of  real matrix.
Therefore,
we firstly introduce the D-star order by applying DMPGI.

\begin{definition}
\label{DStarPartialOrder-Def}
Let
DMPGIs of $\widehat  A$ and $\widehat  B$   exist.
If $\widehat  A$, $\widehat  B$ satisfy
\begin{align}
\label{DStarPartialOrder}
\widehat A^{\dag}\widehat A=\widehat A^{\dag}\widehat  B
\ \mbox{ and }\
\widehat A\widehat A^{\dag}=\widehat B\widehat A^{\dag},
\end{align}
we say that $\widehat  A$ is below $\widehat  B$ under the   D-star order,
and denote it by
  $\widehat A\overset{\tiny\mbox{\rm  D\!-}\ast}\leq\widehat B$.
\end{definition}

\begin{theorem}
\label{DSPO-Char-1-Th}
Let $\widehat  A=A+\varepsilon A_0$
and
$\widehat  B=B+\varepsilon B_0$,
where $A$, $A_0$, $B$ and $B_0\in\mathbb{R}^{m\times n}$.
And let  DMPGIs of $\widehat  A$ and $\widehat  B$  exist,
then
$\widehat A\overset{\tiny\mbox{\rm  D\!-}\ast}\leq\widehat B$
if and only if
\begin{align}
\label{DSPO-Char-1}
\left\{\begin{array}{l}
A\overset\ast\leq B  \\
A^{\dag}A_0+RA=A^{\dag}B_0+RB  \\
AR+A_0A^{\dag}=BR+B_0A^{\dag},
\end{array}\right.
\end{align}
where
$ R=-A^\dagger A_0A^\dagger
+\left(A^{  T}A\right)^\dagger A_0^{  T}\left(I_m-AA^\dagger\right)
+\left(I_n-A^\dagger A\right)A_0^{  T}\left(AA^{  T}\right)^\dagger$.
\end{theorem}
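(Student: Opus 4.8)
The plan is to expand both sides of the two defining equations in Definition \ref{DStarPartialOrder-Def} as dual numbers (real part plus $\varepsilon$-part) using the formula $\widehat A^{\dag}=A^{\dag}+\varepsilon R$ from Theorem \ref{lemma-2.1}, and likewise $\widehat B^{\dag}=B^{\dag}+\varepsilon R_B$ for the appropriate matrix $R_B$ built from $B,B_0$. Since $\varepsilon^2=0$, the equation $\widehat A^{\dag}\widehat A=\widehat A^{\dag}\widehat B$ splits into a real part $A^{\dag}A=A^{\dag}B$ and an $\varepsilon$-part $A^{\dag}A_0+RA=A^{\dag}B_0+RB$; symmetrically, $\widehat A\widehat A^{\dag}=\widehat B\widehat A^{\dag}$ splits into $AA^{\dag}=BA^{\dag}$ and $A_0A^{\dag}+AR=B_0A^{\dag}+BR$. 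So the dual system \eqref{DStarPartialOrder} is equivalent to the four real equations
\[
A^{\dag}A=A^{\dag}B,\quad AA^{\dag}=BA^{\dag},\quad A^{\dag}A_0+RA=A^{\dag}B_0+RB,\quad AR+A_0A^{\dag}=BR+B_0A^{\dag}.
\]

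Next I would invoke Theorem \ref{RStar-Def}: the first two equations are precisely condition (2) there, hence equivalent to $A\overset\ast\leqslant B$. (I should note the rank hypothesis $b>a$ in Theorem \ref{RStar-Def} is merely a convenience for the decomposition form (4); the equivalence of (1) and (2) holds without it, or one treats the case $A=B$ separately, in which the whole system \eqref{DSPO-Char-1} is trivially satisfied.) Substituting this back, the remaining two equations are exactly the second and third lines of \eqref{DSPO-Char-1}, with $R$ the matrix displayed in the theorem statement — which is the same $R$ appearing in \eqref{eq-2.1}. This gives the ``only if'' direction immediately, and since every step is an equivalence the ``if'' direction comes for free: assuming the three conditions in \eqref{DSPO-Char-1}, the first yields the real parts of \eqref{DStarPartialOrder} via Theorem \ref{RStar-Def}(2), and the other two are the $\varepsilon$-parts, so reassembling gives \eqref{DStarPartialOrder}.

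The only genuine subtlety — and the step I would be most careful about — is the bookkeeping in the $\varepsilon$-parts. One must check that expanding $\widehat A^{\dag}\widehat B$ produces the term $RB$ (not $R_B A$ or a mix), i.e. that the characterization is stated purely in terms of $R=R(A,A_0)$ and does not secretly require $R_B$; this works because $\widehat A^{\dag}$, not $\widehat B^{\dag}$, is what multiplies $\widehat B$ in \eqref{DStarPartialOrder}. A secondary point is that Definition \ref{DStarPartialOrder-Def} and Theorem \ref{lemma-2.1} presuppose that $\widehat A^{\dag}$ and $\widehat B^{\dag}$ exist, so the formula $\widehat A^{\dag}=A^{\dag}+\varepsilon R$ is legitimately available; no existence argument is needed here. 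Beyond that the proof is just matching coefficients of $1$ and $\varepsilon$, so I would present the dual expansion once, read off the four real equations, cite Theorem \ref{RStar-Def} for the first two, and conclude.
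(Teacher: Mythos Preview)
Your proposal is correct and follows essentially the same route as the paper: write $\widehat A^{\dag}=A^{\dag}+\varepsilon R$ via Theorem~\ref{lemma-2.1}, expand the two dual equations in Definition~\ref{DStarPartialOrder-Def}, match real and $\varepsilon$-coefficients to obtain the four real equations, and identify the pair $A^{\dag}A=A^{\dag}B$, $AA^{\dag}=BA^{\dag}$ with $A\overset\ast\leq B$ via Theorem~\ref{RStar-Def}. Your side remarks (that $R_B$ never enters because only $\widehat A^{\dag}$ appears in \eqref{DStarPartialOrder}, and that the rank hypothesis $b>a$ in Theorem~\ref{RStar-Def} is inessential for the equivalence $(1)\Leftrightarrow(2)$) are accurate and slightly more careful than the paper's own presentation.
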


\begin{proof}
Let
$\widehat  A=A+\varepsilon A_0$,
$\widehat  B=B+\varepsilon B_0$,
and
 DMPGI $\widehat  A^{\dag}$ of $\widehat  A$ exist.
 Denote
 $\widehat  A^{\dag}=A^{\dag}+\varepsilon R$,
 where $ R$
 is  as in  Theorem \ref{lemma-2.1}.
Then
\begin{align}
\nonumber
\left\{\begin{array}{l}\widehat A^\dagger\widehat A
=
\left(A^\dagger+\varepsilon R\right)\left(A+\varepsilon A_0\right)
=A^\dagger A+\varepsilon\left(A^\dagger A_0+RA\right)
\\
\widehat A^\dagger\widehat B
=\left(A^\dagger+\varepsilon R\right)\left(B+\varepsilon B_0\right)
=A^\dagger B+\varepsilon\left(A^\dagger B_0+RB\right)
\end{array}\right.
\end{align}
and
\begin{align}
\nonumber
\left\{\begin{array}{l}
\widehat A\widehat A^\dagger
=\left(A+\varepsilon A_0\right)\left(A^\dagger+\varepsilon R\right)
=AA^\dagger+\varepsilon\left(AR+A_0A^\dagger\right)
\\
\widehat B\widehat A^\dagger
=\left(B+\varepsilon B_0\right)\left(A^\dagger+\varepsilon R\right)
=BA^\dagger+\varepsilon\left(BR+B_0A^\dagger\right).
\end{array}\right.
\end{align}

Since $\widehat A\overset{\tiny\mbox{\rm  D\!-}\ast}\leq\widehat B$,
it follows from Definition \ref{DStarPartialOrder-Def}
 that $\widehat A\overset{\tiny\mbox{\rm  D\!-}\ast}\leq\widehat B$
if and only if
\begin{align}
\nonumber
\left\{\begin{array}{l}
 A^\dagger A=A^\dagger B,\;AA^\dagger=BA^\dagger \\
 A^\dagger A_0+RA=A^\dagger B_0+RB \\
 AR+A_0A^\dagger=BR+B_0A^\dagger.
\end{array}\right.
\end{align}

Since
$A^\dagger A=A^\dagger B$
and
$AA^\dagger=BA^\dagger$,
we get
 $A\overset\ast\leq B$.
Therefore,
 $\widehat A\overset{\tiny\mbox{\rm  D\!-}\ast}\leq\widehat B$
 is equivalent to (\ref{DSPO-Char-1}).
\end{proof}

\begin{theorem}
\label{DSPO-Char-2-Th}
Let $\widehat  A=A+\varepsilon A_0\in\mathbb{D}^{m\times n}$,
$\widehat  B=B+\varepsilon B_0\in\mathbb{D}^{m\times n}$,
and  DMPGIs of $\widehat  A$ and $\widehat  B$   exist.
Then
$\widehat A\overset{\tiny\mbox{\rm  D\!-}\ast}\leq\widehat B$
 if and only if
there exist orthogonal matrices $U$ and $V$
{\small
\begin{align}
\label{DSPO-Char-2}
\left\{\begin{array}{l}\widehat A
=U\begin{pmatrix}
 T_1&0&0\\
0&0&0\\
0&0&0\end{pmatrix}V^{  T}
+\varepsilon U
\begin{pmatrix}
A_1&A_2&A_3\\
A_4&0&0\\
A_7&0&0
\end{pmatrix}V^{  T},
\\
\widehat B=
U\begin{pmatrix} T_1&0&0\\
0& T_2&0\\
0&0&0
\end{pmatrix}
V^{  T}
+\varepsilon U\begin{pmatrix}A_1
&A_2-{ T_1^{-1}}{  A_4^{  T}T_2}&A_3\\
A_4-{ T_2}{  A_2^{  T}}{ T_1^{-1}}&
  B_5&B_6\\A_7&B_8&0
  \end{pmatrix}V^{  T},
  \end{array}\right.
\end{align}}
where $T_1$ and $T_2$ are diagonal positive definite matrices.
\end{theorem}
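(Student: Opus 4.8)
The plan is to reduce the statement to Theorem~\ref{DSPO-Char-1-Th} and then ``coordinatize'' the system \eqref{DSPO-Char-1} by means of the simultaneous decomposition furnished by the star order. Suppose first that $\widehat A\overset{\tiny\mbox{\rm  D\!-}\ast}\leq\widehat B$. By Theorem~\ref{DSPO-Char-1-Th} this gives, in particular, $A\overset\ast\leq B$, so Theorem~\ref{RStar-Def}(4) provides orthogonal $U$ and $V$ with $A=U\,\mathrm{diag}(T_1,0,0)\,V^{T}$ and $B=U\,\mathrm{diag}(T_1,T_2,0)\,V^{T}$, where $T_1,T_2$ are diagonal positive definite. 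Writing $A_0=U(A_{ij})V^{T}$ and $B_0=U(B_{ij})V^{T}$ in the conformal $3\times3$ block partition, I would first use that the DMPGI of $\widehat A$ exists: condition $(2)$ of Theorem~\ref{lemma-2.1} says $(I_m-AA^{\dag})A_0(I_n-A^{\dag}A)=0$, and evaluating the two projectors in these coordinates ($I_m-AA^{\dag}=U\,\mathrm{diag}(0,I,I)\,U^{T}$, $I_n-A^{\dag}A=V\,\mathrm{diag}(0,I,I)\,V^{T}$) forces $A_{22}=A_{23}=A_{32}=A_{33}=0$. Renaming the surviving blocks $A_1,A_2,A_3$ (first block-row) and $A_4,A_7$ (first block-column), this is precisely the asserted form of $\widehat A$ in \eqref{DSPO-Char-2}; the same argument applied to $\widehat B$ gives $B_{33}=0$.

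Next I would write the dual part $R$ of $\widehat A^{\dag}$ (as in Theorem~\ref{lemma-2.1}) in these coordinates. Using $A^{\dag}=V\,\mathrm{diag}(T_1^{-1},0,0)\,U^{T}$, $(A^{T}A)^{\dag}=V\,\mathrm{diag}(T_1^{-2},0,0)\,V^{T}$ and $(AA^{T})^{\dag}=U\,\mathrm{diag}(T_1^{-2},0,0)\,U^{T}$, a direct block multiplication yields $R=V\left(\begin{smallmatrix}-T_1^{-1}A_1T_1^{-1}&T_1^{-2}A_4^{T}&T_1^{-2}A_7^{T}\\A_2^{T}T_1^{-2}&0&0\\A_3^{T}T_1^{-2}&0&0\end{smallmatrix}\right)U^{T}$. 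Substituting $A$, $B$, $A_0$, $R$ and the unknown $B_0=U(B_{ij})V^{T}$ into the last two equations of \eqref{DSPO-Char-1}, namely $A^{\dag}A_0+RA=A^{\dag}B_0+RB$ and $AR+A_0A^{\dag}=BR+B_0A^{\dag}$, and comparing blocks in the two equations, the $(1,1)$, $(1,3)$ and $(3,1)$ positions give $B_{11}=A_1$, $B_{13}=A_3$, $B_{31}=A_7$; the $(1,2)$ position gives $B_{12}=A_2-T_1^{-1}A_4^{T}T_2$; the $(2,1)$ position gives $B_{21}=A_4-T_2A_2^{T}T_1^{-1}$; and all remaining positions are either trivially satisfied or impose no constraint, leaving $B_{22},B_{23},B_{32}$ (written $B_5,B_6,B_8$) arbitrary. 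Together with $B_{33}=0$ this is exactly the form of $\widehat B$ in \eqref{DSPO-Char-2}.

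For the converse I would simply reverse the computation: starting from the decomposition \eqref{DSPO-Char-2}, the vanishing of the appropriate lower-right blocks of the two dual parts gives $(I_m-AA^{\dag})A_0(I_n-A^{\dag}A)=0$ and $(I_m-BB^{\dag})B_0(I_n-B^{\dag}B)=0$, so both DMPGIs exist by Theorem~\ref{lemma-2.1}; the real parts are in the shape of Theorem~\ref{RStar-Def}(4), so $A\overset\ast\leq B$; and recomputing $R$ and the two matrix products above shows the two remaining equations of \eqref{DSPO-Char-1} hold. Theorem~\ref{DSPO-Char-1-Th} then yields $\widehat A\overset{\tiny\mbox{\rm  D\!-}\ast}\leq\widehat B$.

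I expect the only real difficulty to be bookkeeping: propagating the $3\times3$ block partitions through the formula for $R$ and through both dual equations without sign or transpose errors, and verifying that the ``conjugate'' positions (the $(1,2)$ block of the first equation against the $(2,1)$ block of the second) are consistent rather than over-determined --- this consistency is what simultaneously pins down $B_{12}$ and $B_{21}$ and produces the mirror-image correction terms $-T_1^{-1}A_4^{T}T_2$ and $-T_2A_2^{T}T_1^{-1}$. A minor point to dispatch cleanly is the degenerate block sizes: when $A$ and $B$ have the same rank the $T_2$-row and column are empty (and $A=B$), and when $B$ has full row rank or full column rank the third block-row or block-column is empty, in which cases the corresponding block equations are vacuous.
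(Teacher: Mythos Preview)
Your proposal is correct and follows essentially the same route as the paper: both use Theorem~\ref{DSPO-Char-1-Th} to obtain $A\overset\ast\leq B$, invoke the simultaneous SVD from Theorem~\ref{RStar-Def}(4), use the DMPGI existence criterion of Theorem~\ref{lemma-2.1} to constrain the dual parts, compute $\widehat A^{\dag}$ (equivalently $R$) in block form, and then compare blocks. The only cosmetic difference is that the paper works directly with the defining equalities $\widehat A^{\dag}\widehat A=\widehat A^{\dag}\widehat B$ and $\widehat A\widehat A^{\dag}=\widehat B\widehat A^{\dag}$ rather than their dual-part incarnations from Theorem~\ref{DSPO-Char-1-Th}, but this is the same computation.
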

\begin{proof}
``$\Rightarrow$"
\quad
Denote
${\rk}\left( A \right)=a$
and
 ${\rk}\left( B \right)=b$.
Since $\widehat A\overset{\tiny\mbox{\rm  D\!-}\ast}\leq\widehat B$,
by applying Theorem \ref{DSPO-Char-1-Th},
we get $A\overset\ast\leq B$.
Then
$A$ and $B$ are of the forms as in (\ref{2.1}).
Since  the DMPGI  of $\widehat  A$  exists,
we write
\begin{align}
\label{3.5}
A_0=
U\begin{pmatrix}
\textstyle A_1&\textstyle A_2&\textstyle A_3\\
\textstyle A_4&0&0\\
\textstyle A_7&0&0
\end{pmatrix}V^{  T},
\end{align}
where $A_1\in\mathbb{R}^{a\times a}$,
   $A_2\in\mathbb{R}^{a\times (b-a)}$
 and $A_4\in\mathbb{R}^{(b-a)\times a}$.
Applying Theorem \ref{lemma-2.1}, we have
\begin{align}
\label{3.6}
\widehat A^\dagger=V\begin{pmatrix}
 T_1^{-1}&0&0
\\0&0&0
\\0&0&0
\end{pmatrix}U^{  T}
+
\varepsilon V
\begin{pmatrix}
-T_1^{-1}A_1T_1^{-1}&T_1^{-2}A_4^{  T}& T_1^{-2}A_7^{  T}\\
A_2^{  T}T_1^{-2}&0&0\\
A_3^{  T}T_1^{-2}&0&0\end{pmatrix}U^{  T} .
\end{align}

Since the DMPGI of $\widehat B$ exists,
we  write
\begin{align}
\label{3.7}
B_0=U\begin{pmatrix}  B_1&  B_2&  B_3\\
  B_4&  B_5&  B_6\\
  B_7&  B_8&0\end{pmatrix}V^{  T},
\end{align}
where $B_1\in\mathbb{R}^{a\times a}$ and $B_2\in\mathbb{R}^{a\times (b-a)}$.
Applying (\ref{2.1}), (\ref{3.5}), (\ref{3.6}) and (\ref{3.7}) gives
{\small
\begin{align}
\label{3-3-1}
\left\{
\begin{aligned}
\widehat A^\dagger\widehat A
&
=V\begin{pmatrix}
I&0&0\\0&0&0\\0&0&0\end{pmatrix}V^{  T}
+\varepsilon V\begin{pmatrix}   0&
 T_1^{-1}A_2& T_1^{-1}A_3\\
    A_2^{  T}T_1^{-1}&0&0\\
    A_3^{  T}T_1^{-1}&0&0\end{pmatrix}V^{  T}
\\
\widehat A^\dagger\widehat B
&
=
V\begin{pmatrix}    I&0&0\\0&0&0\\0&0&0\end{pmatrix}V^{  T}
+\varepsilon V\begin{pmatrix} T_1^{-1}B_1-T_1^{-1}A_1&
 T_1^{-1}B_2
+T_1^{-2}A_4^{  T}T_2& T_1^{-1}B_3\\
    A_2^{  T}T_1^{-1}&0&0\\
    A_3^{  T}T_1^{-1}&0&0\end{pmatrix}V^{  T}.
\end{aligned}
\right.
\end{align}}

Since $\widehat A\overset{\tiny\mbox{\rm  D\!-}\ast}\leq\widehat B$,
we have
 $\widehat A^\dagger\widehat A=\widehat A^\dagger\widehat B$.
Applying (\ref{3-3-1}) gives
\begin{align}
\nonumber
\left\{\begin{array}{l}
0= T_1^{-1}B_1
 -T_1^{-1}A_1 \\
T_1^{-1}A_2=
 T_1^{-1}B_2+T_1^{-2}A_4^{  T}T_2 \\
 T_1^{-1}B_3
= T_1^{-1}A_3.
\end{array}\right.
\end{align}
Therefore,
$    B_1=A_1$,
    $B_2=A_2-T_1^{-1}A_4^{  T}T_2$ and
    $B_3=A_3$.
It follows from (\ref{3.7}) that
\begin{align}
\label{3.8}
B_0=U\begin{pmatrix}  A_1&    A_2-T_1^{-1}A_4^{  T}T_2&    A_3\\
B_4&B_5&B_6\\ B_7&B_8&0\end{pmatrix}V^{  T}.
\end{align}

Applying (\ref{2.1}), (\ref{3.5}), (\ref{3.6})  and (\ref{3.8}),
we obtain
{\small
\begin{align}
\label{3-3-2}
\left\{
\begin{aligned}
\widehat A\widehat A^\dagger
&
=U\begin{pmatrix}
  I&0&0\\0&0&0\\0&0&0\end{pmatrix}U^{  T}+
\varepsilon U\begin{pmatrix}
0&
 T_1^{-1}A_4^{  T}&
 T_1^{-1}A_7^{  T}\\
  A_4T_1^{-1}&0&0\\
  A_7T_1^{-1}&0&0\end{pmatrix}U^{  T}
\\
\widehat B\widehat A^\dagger
&
=
U\begin{pmatrix}  I&0&0\\0&0&0\\0&0&0\end{pmatrix}U^{  T}
+\varepsilon U\begin{pmatrix}
 0& T_1^{-1}A_4^{  T}& T_1^{-1}A_7^{  T}\\
  B_4T_1^{-1}+T_2A_2^{  T}T_1^{-2}&0&0\\
  B_7T_1^{-1}&0&0\end{pmatrix}U^{  T}.
\end{aligned}
\right.
\end{align}}
Since $\widehat A\overset{\tiny\mbox{\rm  D\!-}\ast}\leq\widehat B$,
we get
$\widehat A\widehat A^\dagger=\widehat B\widehat A^\dagger$.
It follows from  (\ref{3-3-2}) that
$A_4T_1^{-1}=B_4T_1^{-1}+T_2A_2^{  T}T_1^{-2}$
and
$A_7T_1^{-1}=B_7T_1^{-1}$,
that is,
\begin{align}
\label{3.10}
B_4=A_4-T_2A_2^{  T}T_1^{-1}
\mbox{ \ and } \
B_7=A_7.
\end{align}
Therefore,
applying (\ref{2.1}),
(\ref{3.8}) and (\ref{3.10}),
we get
\begin{align}
\nonumber
\widehat B
=
U\begin{pmatrix}
 T_1&0&0\\
0& T_2&0\\
0&0&0\end{pmatrix}V^{  T}
+\varepsilon U\begin{pmatrix}
A_1&A_2-{ T_1^{-1}}{  A_4^{  T}T_2}&A_3
\\
A_4-{ T_2}{    A_2^{  T}}{ T_1^{-1}}&  B_5&B_6
\\
A_7&B_8&0
\end{pmatrix}V^{  T}.
\end{align}

``$\Leftarrow$"
\quad
Let there exist orthogonal matrices $U$ and $V$
such that
$\widehat A$ and $\widehat B$ can be represented as  (\ref{DSPO-Char-2}).
Then the form of $\widehat A^\dagger$ is as in (\ref{3.6}).
It is easy to check that
\begin{align*}
\left\{
\begin{aligned}
\widehat A^\dagger\widehat A
&
=V_1\begin{pmatrix}
I&0&0\\0&0&0\\0&0&0\end{pmatrix}V_1^{  T}
+\varepsilon V_1\begin{pmatrix}{0}&
 T_1^{-1}A_2& T_1^{-1}A_3\\
  A_2^{  T}T_1^{-1}&0&0\\
  A_3^{  T}T_1^{-1}&0&0\end{pmatrix}V_1^{  T}
=
\widehat A^\dagger\widehat B
\\
\widehat A\widehat A^\dagger
&
=U\begin{pmatrix}
    I&0&0\\0&0&0\\0&0&0\end{pmatrix}U^{  T}+
\varepsilon U\begin{pmatrix}{0}&
 T_1^{-1}A_4^{  T}&
 T_1^{-1}A_7^{  T}\\
    A_4T_1^{-1}&0&0\\
    A_7T_1^{-1}&0&0\end{pmatrix}U^{  T}=
\widehat B\widehat A^\dagger .
\end{aligned}
\right.
\end{align*}
Therefore,
  applying Definition \ref{DStarPartialOrder-Def}
we get $\widehat A\overset{\tiny\mbox{\rm  D\!-}\ast}\leq\widehat B$.
\end{proof}

\begin{theorem}
\label{DSPO-Char-Partial-Th}
The   D-star order is a partial order.
\end{theorem}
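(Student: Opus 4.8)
The plan is to verify the three defining properties of a partial order---reflexivity, antisymmetry, and transitivity---for the relation $\overset{\tiny\mbox{\rm D\!-}\ast}\leq$ on the set of dual matrices whose DMPGI exists. Reflexivity is immediate: for any such $\widehat A$, the Penrose equations $\left(\widehat 1\right)$ and $\left(\widehat 3\right)$--$\left(\widehat 4\right)$ applied to $\widehat A^{\dag}$ give $\widehat A^{\dag}\widehat A=\widehat A^{\dag}\widehat A$ and $\widehat A\widehat A^{\dag}=\widehat A\widehat A^{\dag}$ trivially, so $\widehat A\overset{\tiny\mbox{\rm D\!-}\ast}\leq\widehat A$.

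For antisymmetry, I would suppose $\widehat A\overset{\tiny\mbox{\rm D\!-}\ast}\leq\widehat B$ and $\widehat B\overset{\tiny\mbox{\rm D\!-}\ast}\leq\widehat A$ and show $\widehat A=\widehat B$. First, taking the real parts of the defining equations gives $A^{\dag}A=A^{\dag}B$, $AA^{\dag}=BA^{\dag}$ and symmetrically $B^{\dag}B=B^{\dag}A$, $BB^{\dag}=AB^{\dag}$, which is exactly $A\overset{\ast}\leq B$ and $B\overset{\ast}\leq A$; since the real star order is a partial order (Theorem \ref{RStar-Def}), this forces $A=B$. Now invoking Theorem \ref{DSPO-Char-2-Th} for the pair $(\widehat A,\widehat B)$: since $A=B$ we have $\rk(A)=\rk(B)$, so the block $T_2$ is absent, and the decomposition \eqref{DSPO-Char-2} collapses to $B_0=U\left(\begin{smallmatrix}A_1&A_2\\A_4&B_5\end{smallmatrix}\right)V^{T}$ with $A_1,A_2,A_4$ forced equal to the corresponding blocks of $A_0$; applying the same theorem to the pair $(\widehat B,\widehat A)$ pins down the remaining block as well, giving $A_0=B_0$. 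Alternatively, one can argue directly from \eqref{DSPO-Char-1}: with $A=B$ the second and third equations of \eqref{DSPO-Char-1} become $A^{\dag}A_0+RA=A^{\dag}B_0+RB$ and $AR+A_0A^{\dag}=BR+B_0A^{\dag}$, i.e. $A^{\dag}(A_0-B_0)=0$ and $(A_0-B_0)A^{\dag}=0$; the symmetric pair (using that $\widehat B^{\dag}=B^{\dag}+\varepsilon R'$ with $R'$ the same expression since $A=B$) gives the complementary conditions, and combining $A^{\dag}(A_0-B_0)=0$, $(A_0-B_0)A^{\dag}=0$ with the structure from Theorem \ref{lemma-2.1}$(2)$ (namely $(I-AA^{\dag})(A_0-B_0)(I-A^{\dag}A)=0$, which holds because both $\widehat A$ and $\widehat B$ have DMPGIs) yields $A_0-B_0=0$.

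For transitivity, I would suppose $\widehat A\overset{\tiny\mbox{\rm D\!-}\ast}\leq\widehat B$ and $\widehat B\overset{\tiny\mbox{\rm D\!-}\ast}\leq\widehat C$ and derive $\widehat A\overset{\tiny\mbox{\rm D\!-}\ast}\leq\widehat C$. The real parts give $A\overset{\ast}\leq B$ and $B\overset{\ast}\leq C$, hence $A\overset{\ast}\leq C$ by transitivity of the real star order (Theorem \ref{RStar-Def}). The cleanest route is to use the canonical form of Theorem \ref{DSPO-Char-2-Th}: fix orthogonal $U,V$ putting the pair $(\widehat A,\widehat B)$ in the form \eqref{DSPO-Char-2}, note that the same $U,V$ (after a block refinement of the index set corresponding to $\rk(C)$) can be used for $(\widehat B,\widehat C)$ because the real star order $B\overset{\ast}\leq C$ refines the decomposition $A\overset{\ast}\leq B$ compatibly, and then read off that $(\widehat A,\widehat C)$ again has the form \eqref{DSPO-Char-2}, so the characterization gives $\widehat A\overset{\tiny\mbox{\rm D\!-}\ast}\leq\widehat C$. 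I expect the main obstacle to be this compatibility-of-decompositions bookkeeping in the transitivity step: one must check that the orthogonal matrices and the block partition that simultaneously put $A\overset{\ast}\leq B$ and $B\overset{\ast}\leq C$ into standard form can be chosen consistently, refining the three-block partition of Theorem \ref{RStar-Def} into a four-block partition (for $\rk A$, $\rk B-\rk A$, $\rk C-\rk B$, and the zero block) and tracking how the dual-part blocks transform. If this turns out to be awkward, the fallback is the purely algebraic argument via Theorem \ref{DSPO-Char-1-Th}: translate everything into the equations $A^{\dag}A_0+R_AA=A^{\dag}B_0+R_AB$ etc., use $A^{\dag}A=A^{\dag}B=A^{\dag}C$ and $AA^{\dag}=BA^{\dag}=CA^{\dag}$ (consequences of $A\overset{\ast}\leq B\overset{\ast}\leq C$), and manipulate the dual-part identities---the work there is keeping straight which of $R_A,R_B$ appears and exploiting the identities $R_A A=R_A B$-type relations that follow from the already-established second/third lines of \eqref{DSPO-Char-1}. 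Reflexivity and antisymmetry are routine; transitivity is where the care is needed.
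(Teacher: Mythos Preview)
Your proposal is correct and matches the paper's approach: trivial reflexivity, antisymmetry via the real star order plus the DMPGI existence condition, and transitivity via the four-block refinement of Theorem~\ref{DSPO-Char-2-Th} (the paper carries out exactly the block bookkeeping you anticipate). One caution on your first antisymmetry route: applying Theorem~\ref{DSPO-Char-2-Th} separately to $(\widehat A,\widehat B)$ and to $(\widehat B,\widehat A)$ yields a priori different orthogonal pairs $U,V$, so direct block comparison is delicate---your alternative argument ($A^{\dag}(A_0{-}B_0)=0$, $(A_0{-}B_0)A^{\dag}=0$, and $(I{-}AA^{\dag})(A_0{-}B_0)(I{-}A^{\dag}A)=0$ forcing $A_0=B_0$) is the clean one, and is effectively what the paper does in canonical-form language by invoking the DMPGI existence of $\widehat B$ to kill the residual blocks.
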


\begin{proof}
Let $\widehat  A=A+\varepsilon A_0$;
$\widehat  B=B+\varepsilon B_0$;
 DMPGIs of
$\widehat  A$ and $ \widehat  B$   exist;
and
$\widehat A\overset{\tiny\mbox{\rm  D\!-}\ast}\leq\widehat B$;
  i.e.;
  $\widehat A^{\dag}\widehat A=\widehat A^{\dag}\widehat  B$
  and
  $\widehat A\widehat A^{\dag}=\widehat B\widehat A^{\dag}$.

Next, we show that the D-star order satisfies reflexivity,  anti-symmetry and transitivity.

(i) Reflexivity is self-evident.

(ii) Let
$\widehat  B\overset{\tiny\mbox{\rm  D\!-}\ast}\leq\widehat  A$.
Applying Theorem \ref{DSPO-Char-1-Th}, it follows from
$\widehat  A\overset{\tiny\mbox{\rm  D\!-}\ast}\leq\widehat  B$ that we have
 $   A\overset\ast\leq   B$ and
$   B\overset\ast\leq   A$.
From the anti-symmetry of star partial order on real matrices, we have
\  $A=B$.

Since $\widehat  A\overset{\tiny\mbox{\rm  D\!-}\ast}\leq\widehat  B$,
   $\widehat  A$ and $\widehat  B$ can be represented  in the forms as in  (\ref{DSPO-Char-2}).
Applying Theorem \ref{DSPO-Char-2-Th}, we have $T_2=0$.
Therefore,
{\small
\begin{align}
\label{DSPO-Char-Partial-1}
\left\{\begin{array}{l}\widehat A
=U\begin{pmatrix} T_1&0&0\\0&0&0\\0&0&0\end{pmatrix}V^{  T}
+\varepsilon U\begin{pmatrix}A_1&A_2&A_3\\A_4&0&0\\A_7&0&0\end{pmatrix}V^{  T}
\\
\widehat B=
U\begin{pmatrix} T_1&0&0\\0&0&0\\0&0&0\end{pmatrix}
V^{  T}
+\varepsilon U\begin{pmatrix}
A_1&A_2 &A_3
\\
A_4 &  B_5&B_6
\\
A_7&B_8&0
\end{pmatrix}V^{  T}.
\end{array}\right.
\end{align}}

Since the DMPGI of  $\widehat  B$ exists,
by applying Theorem \ref{lemma-2.1} and  (\ref{DSPO-Char-Partial-1}),
we get
$B_5=0$,
$B_6=0$
and
$B_8=0$.
Therefore,
 $\widehat  A= \widehat  B$.
So, the anti-symmetry holds.

(iii) 
Let $\widehat  C=C+\varepsilon C_0$;
 the DMPGI of $\widehat  C$ exist;
$\widehat B\overset{\tiny\mbox{\rm  D\!-}\ast}\leq\widehat C$.
Since
$\widehat A\overset{\tiny\mbox{\rm  D\!-}\ast}\leq\widehat B$
and
$\widehat B\overset{\tiny\mbox{\rm  D\!-}\ast}\leq\widehat C$, we get
$A \overset\ast\leq C$.
Denote
${\rk}\left( A \right)=a$,
 ${\rk}\left( B \right)=b$
and
 ${\rk}\left( C \right)=c$.
Then there exist
orthogonal matrices $U$ and $V$ such that
{\small
\begin{align}
\label{3-3-6}
A=U\begin{pmatrix}
 T_1&0&0&0\\
0&0&0&0\\
0&0&0&0\\
0&0&0&0
\end{pmatrix}V^{  T} , \
B=
 U\begin{pmatrix}
  T_1&0&0&0\\
0& T_2&0&0\\
0&0&0&0\\
0&0&0&0
\end{pmatrix}
V^{  T} , \
C=
 U\begin{pmatrix}
  T_1&0&0&0\\
0& T_2&0&0\\
0&0& T_3&0\\
0&0&0&0
\end{pmatrix}
V^{  T} ,
\end{align}}
where $T_1\in\mathbb{R}^{a\times a}$,
$T_2\in\mathbb{R}^{(b-a)\times (b-a)}$ and $T_3\in\mathbb{R}^{(c-b)\times (c-b)}$
are diagonal positive  definite  matrices.

Since $\widehat A\overset{\tiny\mbox{\rm  D\!-}\ast}\leq\widehat B$,
applying Theorem \ref{DSPO-Char-2-Th}, we get
{\small
\begin{align}
\label{3-3-4}
\left\{\begin{array}{l} A_0=
 U\begin{pmatrix}
A_1&A_2&A_{31}&A_{32}\\
A_4&0&0&0\\
A_{71}&0&0&0\\
A_{72}&0&0&0\end{pmatrix}
V^{  T}
\\
B_0=
 U\begin{pmatrix}
A_1&A_2-{ T_1^{-1}}{  A_4^{  T}T_2}&A_{31}&A_{32}\\
A_4-{ T_2}{  A_2^{  T}}{ T_1^{-1}}&
  B_5&B_{61}&B_{62}\\A_{71}&B_{81}&0&0\\
A_{72}&B_{82}&0&0\end{pmatrix}V^{  T}.
\end{array}\right.
\end{align}}
Since the DMPGI of $\widehat C$ exists,
we  denote
\begin{align}
\label{3-3-5}
C_0=U\begin{pmatrix}
  C_1&  C_2&  C_3&  C_4\\
  C_5&  C_6&  C_7&  C_8\\
  C_9&  C_{10}&  C_{11}&  C_{12}\\
  C_{13}&  C_{14}&  C_{15}&  0
\end{pmatrix}V^{  T}.
\end{align}
Applying Theorem \ref{lemma-2.1},
and  (\ref{3-3-6}) and (\ref{3-3-4}),
we get
{\small
\begin{align}
\label{3-3-8}
\widehat A^\dagger=V\begin{pmatrix}
 T_1^{-1}&0&0&0\\
0&0 &0&0\\
0&0&0&0\\
0&0&0&0\end{pmatrix}U^{  T}
+\varepsilon V\begin{pmatrix}
-T_1^{-1}A_1T_1^{-1}& T_1^{-2}A_4^{  T}&
 T_1^{-2}A_{71}^{  T}&
 T_1^{-2}A_{71}^{  T}\\
  A_2^{  T}T_1^{-2}& 0&0&0\\
  A_{31}^{  T}T_1^{-2}&0&0&0\\
  A_{32}^{  T}T_1^{-2}&0&0&0
\end{pmatrix}U^{  T}.
\end{align}}

Since $\widehat  B\overset{\tiny\mbox{\rm  D\!-}\ast}\leq\widehat  C$,
by
applying (\ref{3-3-6}), (\ref{3-3-4}), (\ref{3-3-5})
and Theorem \ref{DSPO-Char-2-Th},
we obtain
{\small
\begin{align}
\nonumber
\widehat C
&
=U\begin{pmatrix}
 T_1&0&0&0\\
0& T_2&0&0\\
0&0& T_3&0\\
0&0&0&0\end{pmatrix}V^{  T}
\\
\label{3-3-7}
&
\qquad
+\varepsilon
U\begin{pmatrix}
A_1&  A_2-T_1^{-1}A_4^{  T}T_2&A_{31}
-{ T_1^{-1}}{  A_{71}^{  T}}{ T_3}&
A_{32}\\
  A_4-T_2A_2^{  T}T_1^{-1}&
 B_5&
 B_{61}
-{ T_2^{-1}}{ B_{81}^{  T}}{ T_3}&
B_{62}\\
A_{71}-{ T_3}{  A_{31}^{  T}}{ T_1^{-1}}&
B_{81}-{ T_3}{  B_{61}^{  T}}{ T_2^{-1}}&
    C_{11}&C_{12}\\A_{72}&B_{82}&C_{15}&0
\end{pmatrix}V^{  T}.
\end{align}}

Then applying (\ref{3-3-8}),(\ref{3-3-7}) gives
{\small
\begin{align}
\nonumber
\left\{\begin{array}{l}
\widehat A^\dagger\widehat A
=V\begin{pmatrix}
  I&0&0&0\\
0& 0&0&0\\
0&0&0&0\\
0&0&0&0\end{pmatrix}V^{  T}+\varepsilon V\begin{pmatrix}
0& T_1^{-1}A_2& T_1^{-1}A_{31}&
 T_1^{-1}A_{32}\\
  A_2^{  T}T_1^{-1}&
 0&0&0\\
  A_{31}^{  T}T_1^{-1}&0&0&0\\
  A_{32}^{  T}T_1^{-1}&0&0&0
\end{pmatrix}V^{  T}
\\
\widehat A^\dagger\widehat C=
V\begin{pmatrix}
  I&0&0&0\\0&0&0&0\\0&0&0&0\\
0&0&0&0\end{pmatrix}V^{  T}+
\varepsilon V\begin{pmatrix}
0& T_1^{-1}A_2
& T_1^{-1}A_{31}&
 T_1^{-1}A_{32}\\
  A_2^{  T}T_1^{-1}&
 0&0&0\\
  A_{31}^{  T}T_1^{-1}&0&0&0\\
  A_{32}^{  T}T_1^{-1}&0&0&0
\end{pmatrix}V^{  T}
\end{array}\right.
\end{align}}
and
{\small
\begin{align}
\nonumber
\left\{\begin{array}{l}
\widehat A\widehat A^\dagger
=
U\begin{pmatrix}
  I&0&0&0\\
0&0&0&0\\
0&0&0&0\\
0&0&0&0
\end{pmatrix}U^{  T}+\varepsilon U
\begin{pmatrix}
0& T_1^{-1}A_4^{  T}&
 T_1^{-1}A_{71}^{  T}&
 T_1^{-1}A_{72}^{  T}\\
  A_4T_1^{-1}&0&0&0\\
  A_{71}T_1^{-1}&0&0&0\\
  A_{72}T_1^{-1}&0&0&0
\end{pmatrix}U^{  T}
\\
\widehat C\widehat A^\dagger
=
U\begin{pmatrix}
 I&0&0&0\\
0&0&0&0\\
0&0&0&0\\0&0&0&0
\end{pmatrix}U^{  T}+
\varepsilon U\begin{pmatrix}
0& T_1^{-1}A_4^{  T}&{T}_1^{-1}A_{71}^{  T}&
{T}_1^{-1}A_{72}^{  T}\\
  A_4{T}_1^{-1}&0&0&0\\
    A_{71}{T}_1^{-1}&0&0&0\\
    A_{72}{T}_1^{-1}&0&0&0
\end{pmatrix}U^{  T}.
\end{array}\right.
\end{align}}
It follows that
$
\widehat A^\dagger\widehat A=\widehat A^\dagger\widehat C$
and
$\widehat A\widehat A^\dagger=\widehat C\widehat A^\dagger$,
that is,
$\widehat A\overset{\tiny\mbox{\rm  D\!-}\ast}\leq\widehat C$.
Therefore,  the transitivity of  D-star order holds.
\end{proof}

\begin{theorem}
\label{DSPO-Char-4-Th}
Let
DMPGIs of
 $\widehat  A$ and $ \widehat  B$
  exist.
Then
$\widehat A\overset{\tiny\mbox{\rm  D\!-}\ast}\leq\widehat B$
if and only if
$\widehat A^\dagger \overset{\tiny\mbox{\rm  D\!-}\ast}\leq\widehat B^\dagger$.
\end{theorem}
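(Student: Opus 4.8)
The plan is to reduce the biconditional to a single implication by exploiting the involutivity of the DMPGI, and then to verify that implication using the canonical form of Theorem~\ref{DSPO-Char-2-Th}. (The statement is the dual analogue of the fact, immediate from Theorem~\ref{RStar-Def}(4), that $A\overset\ast\leqslant B\iff A^{\dag}\overset\ast\leqslant B^{\dag}$, since passing to Moore--Penrose inverses merely replaces the diagonal positive definite blocks $T_1,T_2$ by $T_1^{-1},T_2^{-1}$.) First I would record that, whenever the DMPGI of $\widehat A$ exists, so does the DMPGI of $\widehat A^{\dag}$, and $\bigl(\widehat A^{\dag}\bigr)^{\dag}=\widehat A$: reading the Penrose equations $(\widehat 1)$--$(\widehat 4)$ of \eqref{DMPGI-Def} for the pair $(\widehat A,\widehat A^{\dag})$, they are exactly those equations for the pair $(\widehat A^{\dag},\widehat A)$ after relabelling, so $\widehat A$ is \emph{a} DMPGI of $\widehat A^{\dag}$, and uniqueness of the DMPGI gives $\bigl(\widehat A^{\dag}\bigr)^{\dag}=\widehat A$; likewise $\bigl(\widehat B^{\dag}\bigr)^{\dag}=\widehat B$. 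Consequently it is enough to prove the single implication $\widehat A\overset{\tiny\mbox{\rm  D\!-}\ast}\leq\widehat B\Rightarrow\widehat A^{\dag}\overset{\tiny\mbox{\rm  D\!-}\ast}\leq\widehat B^{\dag}$; applying it to the pair $\widehat A^{\dag},\widehat B^{\dag}$ (whose DMPGIs exist) and using $\bigl(\widehat A^{\dag}\bigr)^{\dag}=\widehat A$, $\bigl(\widehat B^{\dag}\bigr)^{\dag}=\widehat B$ yields the converse.

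To prove the implication, assume $\widehat A\overset{\tiny\mbox{\rm  D\!-}\ast}\leq\widehat B$ and choose orthogonal $U,V$ putting $\widehat A,\widehat B$ into the form \eqref{DSPO-Char-2}. I would then compute $\widehat A^{\dag}$ and $\widehat B^{\dag}$ directly from these forms via formula \eqref{eq-2.1} of Theorem~\ref{lemma-2.1}: for $\widehat A$ the invertible block is $T_1$, which reproduces \eqref{3.6}, while for $\widehat B$ the invertible block is the $2\times 2$ block $\mathrm{diag}(T_1,T_2)$, so grouping the dual part of $\widehat B$ into a $2\times 2$ block array with leading block of size $b\times b$ and substituting into \eqref{eq-2.1} yields an explicit $3\times 3$-block expression for $\widehat B^{\dag}$. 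The key point is then to recognise that $\bigl(\widehat A^{\dag},\widehat B^{\dag}\bigr)$, written with $V$ and $U$ playing the roles of the $U$ and $V$ of \eqref{DSPO-Char-2}, again matches the template \eqref{DSPO-Char-2} with $T_1,T_2$ replaced by the diagonal positive definite matrices $T_1^{-1},T_2^{-1}$: the primary parts are $V\,\mathrm{diag}(T_1^{-1},0,0)\,U^{T}$ and $V\,\mathrm{diag}(T_1^{-1},T_2^{-1},0)\,U^{T}$; the dual part of $\widehat A^{\dag}$ has the required vanishing pattern (its bottom-right $2\times 2$ block is $0$); and the two linked off-diagonal blocks of the dual part of $\widehat B^{\dag}$ come out, with $A_2'=T_1^{-2}A_4^{T}$ and $A_4'=A_2^{T}T_1^{-2}$, precisely as $A_2'-T_1(A_4')^{T}T_2^{-1}$ and $A_4'-T_2^{-1}(A_2')^{T}T_1$, while the $(3,3)$ block vanishes. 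Once this matching is checked, Theorem~\ref{DSPO-Char-2-Th}---applicable because $\widehat A^{\dag},\widehat B^{\dag}$ have DMPGIs---gives $\widehat A^{\dag}\overset{\tiny\mbox{\rm  D\!-}\ast}\leq\widehat B^{\dag}$.

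The main obstacle is the bookkeeping in this middle step: one must keep straight that the invertible block used by Theorem~\ref{lemma-2.1} when inverting $\widehat B$ is the full $2\times 2$ block $\mathrm{diag}(T_1,T_2)$ rather than $T_1$ alone, and then check that the cross terms generated by the off-diagonal entries $A_2-T_1^{-1}A_4^{T}T_2$ and $A_4-T_2A_2^{T}T_1^{-1}$ in the dual part of $\widehat B$ collapse exactly to the constrained shape demanded of $\widehat B^{\dag}$ by \eqref{DSPO-Char-2}. Beyond this, everything is substitution into formulas already proved in the paper, so no new idea is required.
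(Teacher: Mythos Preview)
Your proposal is correct and follows essentially the same route as the paper: both use involutivity $\bigl(\widehat A^{\dag}\bigr)^{\dag}=\widehat A$ to reduce to the single implication, and both establish that implication by passing to the block forms \eqref{DSPO-Char-2}, \eqref{3.6}, \eqref{3-3-9}. The only cosmetic difference is that the paper, having $\widehat A^{\dag}$ and $\widehat B^{\dag}$ in hand, verifies the defining equations $\bigl(\widehat A^{\dag}\bigr)^{\dag}\widehat A^{\dag}=\bigl(\widehat A^{\dag}\bigr)^{\dag}\widehat B^{\dag}$ and $\widehat A^{\dag}\bigl(\widehat A^{\dag}\bigr)^{\dag}=\widehat B^{\dag}\bigl(\widehat A^{\dag}\bigr)^{\dag}$ of Definition~\ref{DStarPartialOrder-Def} directly, whereas you instead match $\bigl(\widehat A^{\dag},\widehat B^{\dag}\bigr)$ against the template \eqref{DSPO-Char-2} and invoke Theorem~\ref{DSPO-Char-2-Th}; your bookkeeping with $A_2'=T_1^{-2}A_4^{T}$, $A_4'=A_2^{T}T_1^{-2}$ is correct and the constrained blocks do collapse as you describe.
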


\begin{proof}
$ \Rightarrow$
\quad
Let $\widehat  A=A+\varepsilon A_0$;
$\widehat  B=B+\varepsilon B_0$;
 DMPGIs of $\widehat  A $ and $\widehat  B$   exist;
 $\widehat A\overset{\tiny\mbox{\rm  D\!-}\ast}\leq\widehat B$.
Then
$\widehat  A$ and  $\widehat  B$
are of the forms as in  (\ref{DSPO-Char-2}).
So $\widehat  A^\dag$ can be represented in the form as in (\ref{3.6}),
and
{\small
\begin{align}
\label{3-3-9}
\widehat B^\dagger
=
V\begin{pmatrix}{T}_1^{-1}&0&0\\
0&{T}_2^{-1}&0\\
0&0&0\end{pmatrix}U^{  T}
+
\varepsilon V\begin{pmatrix}
-{T}_1^{-1}A_1{T}_1^{-1}&
{T}_1^{-2}A_4^{  T}-{T}_1^{-1}A_2{T}_2^{-1}&
{T}_1^{-2}A_7^{  T}\\
    A_2^{  T}{T}_1^{-2}-{T}_2^{-1}A_4{T}_1^{-1}&
-{T}_2^{-1}B_5{T}_2^{-1}&{T}_2^{-2}B_8^{  T}\\
    A_3^{  T}{T}_1^{-2}&B_6^{  T}{T}_2^{-2}&0
\end{pmatrix}U^{  T}.
\end{align}}

Since $\left(\widehat A^\dagger\right)^\dagger=\widehat A$,
by applying (\ref{DSPO-Char-2}),
 (\ref{3.6})  and (\ref{3-3-9}),
we get that
\begin{align}
\nonumber
\left(\widehat A^\dagger\right)^\dagger\widehat A^\dagger
=\left(\widehat A^\dagger\right)^{\dagger  }\widehat B^\dagger,
\ \
\widehat A^\dagger\left(\widehat A^\dagger\right)^{\dagger  }
=\widehat B^\dagger\left(\widehat A^\dagger\right)^{\dagger }.
\end{align}
It follows from Definition \ref{DStarPartialOrder-Def}
 that
$\widehat A^\dagger \overset{\tiny\mbox{\rm  D\!-}\ast}\leq\widehat B^\dagger$.

$ \Leftarrow$
\quad
When
$\widehat A^\dagger \overset{\tiny\mbox{\rm  D\!-}\ast}\leq\widehat B^\dagger$,
it is obvious that
$\left(\widehat A^\dagger\right)^\dagger
\overset{\tiny\mbox{\rm  D\!-}\ast}\leq
\left(\widehat B^\dagger\right)^\dagger $.
Since
$\left(\widehat A^\dagger\right)^\dagger =\widehat A$
and
$\left(\widehat B^\dagger\right)^\dagger =\widehat B$,
we get
$\widehat A \overset{\tiny\mbox{\rm  D\!-}\ast}\leq\widehat B$.
\end{proof}

If $  A\overset\ast\leq   B$, we have
 $(B-A)^\dag = B^\dag -A^\dag$
and
$(B+A)^\dag = B^\dag -\frac{1}{2}A^\dag$.
But in $\mathbb{D}^{m\times n}$,
not all dual matrices have DMPGIs.
Therefore,
in the following theorem,
we consider   properties of
$\widehat B+ \widehat A$
and
$\widehat B - \widehat A $
under the   D-star partial order.

\begin{theorem}
\label{DSPO-Sum-Th}
Let
 DMPGIs of $\widehat  A$ and $\widehat  B$   exist;
$\widehat A\overset{\tiny\mbox{\rm  D\!-}\ast}\leq\widehat B$.
Then
the DMPGIs of
$\widehat B+ \widehat A$
and
$\widehat B - \widehat A $  exist,
and
\begin{align}
\label{DSPO-Sum-1}
\left(\widehat B+\widehat A\right)^\dagger
 =
 \widehat B^\dagger- \frac{1}{2}\widehat A^\dagger,
 \
 \
\left(\widehat B-\widehat A\right)^\dagger
=
 \widehat B^\dagger-  \widehat A^\dagger.
\end{align}
\end{theorem}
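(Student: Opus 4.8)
The plan is to pull everything back to the canonical forms supplied by Theorem~\ref{DSPO-Char-2-Th}. Since $\widehat A\overset{\tiny\mbox{\rm D\!-}\ast}\leq\widehat B$ and both DMPGIs exist, fix orthogonal matrices $U,V$ for which $\widehat A$ and $\widehat B$ have the block shapes in~(\ref{DSPO-Char-2}); then $\widehat A^{\dag}$ is given by~(\ref{3.6}) and $\widehat B^{\dag}$ by the expression~(\ref{3-3-9}) derived in the proof of Theorem~\ref{DSPO-Char-4-Th}. In this one fixed basis $\widehat B\mp\widehat A$ is obtained by block subtraction/addition, so the theorem reduces to two tasks: (i) checking that $\widehat B-\widehat A$ and $\widehat B+\widehat A$ possess DMPGIs; (ii) verifying~(\ref{DSPO-Sum-1}) by comparing $\varepsilon$-coefficients block by block.

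For task (i), recall $A\overset\ast\leq B$ by Theorem~\ref{DSPO-Char-1-Th}, so the real parts are $B-A=U\,\mathrm{diag}(0,T_2,0)\,V^{T}$ of rank $b-a$ and $B+A=U\,\mathrm{diag}(2T_1,T_2,0)\,V^{T}$ of rank $b$, with $(B-A)^{\dag}=B^{\dag}-A^{\dag}$ and $(B+A)^{\dag}=B^{\dag}-\tfrac{1}{2}A^{\dag}$ by the classical star-order identities recalled just before the theorem. Applying the criterion Theorem~\ref{lemma-2.1}(2): in the $U,V$ block decomposition the matrix $\bigl(I_m-(B\mp A)(B\mp A)^{\dag}\bigr)(B_0\mp A_0)\bigl(I_n-(B\mp A)^{\dag}(B\mp A)\bigr)$ is precisely the part of $B_0\mp A_0$ supported on the row/column blocks where $B\mp A$ vanishes, namely the outer blocks $(1,1),(1,3),(3,1),(3,3)$ for $B-A$ and the single block $(3,3)$ for $B+A$. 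Reading $B_0-A_0$ and $B_0+A_0$ off~(\ref{DSPO-Char-2}) shows all of these blocks vanish, so both DMPGIs exist.

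For task (ii) I would use one of two routes. Route (a): apply the compact formula~(\ref{eq-2.1}) (equivalently the block formula in~(\ref{2.2})) to $B-A$, respectively $B+A$, and match the result with the $\varepsilon$-expansions of $\widehat B^{\dag}-\widehat A^{\dag}$ and $\widehat B^{\dag}-\tfrac{1}{2}\widehat A^{\dag}$ read from~(\ref{3.6}) and~(\ref{3-3-9}); here the real parts of~(\ref{DSPO-Sum-1}) are immediate and only the dual parts need checking. Route (b): verify directly that $\widehat X:=\widehat B^{\dag}-\widehat A^{\dag}$ (respectively $\widehat X:=\widehat B^{\dag}-\tfrac{1}{2}\widehat A^{\dag}$) satisfies the Penrose equations $(\widehat{1})$--$(\widehat{4})$ relative to $\widehat B-\widehat A$ (respectively $\widehat B+\widehat A$), and invoke uniqueness of the DMPGI once existence is in hand; here the real parts of the four equations are the classical star-order identities and their $\varepsilon$-parts unwind using the second and third relations in~(\ref{DSPO-Char-1}).

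The principal obstacle is the $\varepsilon$-coefficient bookkeeping: many blocks ($A_1,\dots,A_7,B_5,B_6,B_8$) are in play, and the off-diagonal entries $T_1^{-2}A_4^{T}-T_1^{-1}A_2T_2^{-1}$ and $A_2^{T}T_1^{-2}-T_2^{-1}A_4T_1^{-1}$ of $\widehat B^{\dag}$ from~(\ref{3-3-9}) must be seen to cancel exactly against the corresponding pieces in the expression for $(\widehat B\mp\widehat A)^{\dag}$. Carrying the whole computation in the one fixed pair $U,V$, so that every product is a block multiplication with $T_1,T_2$ acting as invertible diagonal blocks, is what keeps the algebra from ballooning; the rest is routine.
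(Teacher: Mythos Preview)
Your proposal is correct and follows essentially the same approach as the paper: the paper also passes to the fixed $U,V$ basis of Theorem~\ref{DSPO-Char-2-Th}, writes out $\widehat B\pm\widehat A$ block by block, invokes Theorem~\ref{lemma-2.1} to obtain existence and explicit block formulas for $(\widehat B\pm\widehat A)^{\dag}$, and then compares directly with~(\ref{3.6}) and~(\ref{3-3-9}) to conclude~(\ref{DSPO-Sum-1}). Your Route~(a) is exactly what is carried out there; your Route~(b) and the explicit reading of which blocks must vanish for the existence criterion are not spelled out in the paper but amount to the same computation.
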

\begin{proof}
Since $\widehat A\overset{\tiny\mbox{\rm  D\!-}\ast}\leq\widehat B$,
the DMPGIs of $\widehat  A $ and $\widehat  B$   exist.
Applying Theorem \ref{DSPO-Char-2-Th},
we obtain
\begin{align*}
\left\{\begin{array}{l}
\widehat B+\widehat A
=
U\begin{pmatrix}
 2{T}_1&0&0\\0&{T}_2&0\\0&0&0
\end{pmatrix}V^{  T}
+\varepsilon U\begin{pmatrix}
2A_1&A_2-{{T}_1^{-1}}{    A_4^{  T}{T}_2}&2A_3\\
2A_4-{{T}_2}{    A_2^{  T}}{{T}_1^{-1}}&    B_5&B_6\\
2A_7&B_8&0\end{pmatrix}V^{  T}
\\
\widehat B-\widehat A =U\begin{pmatrix}
0&0&0\\0&{T}_2&0\\0&0&0
\end{pmatrix}V^{  T}
+
\varepsilon U\begin{pmatrix}
0&-{{T}_1^{-1}}{    A_4^{  T}{T}_2}&0\\
-{{T}_2}{    A_2^{  T}}{{T}_1^{-1}}&    B_5&B_6\\
0&B_8&0\end{pmatrix}V^{  T}.
\end{array}\right.
\end{align*}
It follows
from Theorem \ref{lemma-2.1}
that
DMPGIs of
$\widehat B+\widehat A$
and
$\widehat B+\widehat A$
exist,
and
{\small
\begin{align*}
\left\{\begin{array}{l}
\left(\widehat B+\widehat A\right)^\dagger
= V\begin{pmatrix}
   {\displaystyle\frac12}
{T}_1^{-1}&0&0\\
0&{T}_2^{-1}&0\\
0&0&0
\end{pmatrix}U^{  T} \\
\qquad
\qquad
\qquad
+\varepsilon V
\begin{pmatrix}
-\frac12{T}_1^{-1}A_1{T}_1^{-1}&
   {\displaystyle\frac12}{T}_1^{-2}A_4^{  T}
-{T}_1^{-1}A_2{T}_2^{-1}
&   {\displaystyle\frac12}{T}_1^{-2}A_7^{  T}\\
   {\displaystyle\frac12}A_2^{  T}{T}_1^{-2}-{T}_2^{-1}A_4{T}_1^{-1}
&   -{T}_2^{-1}B_5{T}_2^{-1}
&{T}_2^{-2}B_8^{  T}\\
   {\displaystyle\frac12}A_3^{  T}{T}_1^{-2}
&{    B_6^{  T}}{   {{T}}_2^{-2}}&0
\end{pmatrix}U^{  T}
\\
\left(\widehat B-\widehat A\right)^\dagger
=
V\begin{pmatrix}
0&0&0\\
0&{T}_2^{-1}&0\\
0&0&0
\end{pmatrix}U^{  T}
+
\varepsilon V\begin{pmatrix}
0&   -{T}_1^{-1}A_2{T}_2^{-1}&0\\
   -{T}_2^{-1}A_4{T}_1^{-1}
&   -{T}_2^{-1}B_5{T}_2^{-1}
&{T}_2^{-1}B_8^{  T}\\
0&{    B_6^{  T}}{   {{T}}_2^{-1}}&0
\end{pmatrix}U^{  T}.
 \end{array}\right.
\end{align*}}
Furthermore,
applying  (\ref{3.6}) and  (\ref{3-3-9}),
we get (\ref{DSPO-Sum-1}).
\end{proof}

It is well known that if
  $A^{  T} A=A^{  T} B$ and $AA^{  T}=BA^{  T}$,
then  $A$ is below $B$ under the star partial order.
Now,
by using the method that is similar to the dual star partial order,
we introduce the T-star order.
Let $\widehat  A, \widehat  B\in\mathbb{D}^{m\times n}$,
If $\widehat  A$, $\widehat  B$ satisfy
\begin{align}
\label{DStarPartialOrder-Trans}
\widehat A^{  T}\widehat A=\widehat A^{  T}\widehat  B , \
  \widehat A\widehat A^{  T}=\widehat B\widehat A^{  T},
\end{align}
we say that $\widehat  A$ is below $\widehat  B$ under the  T-star order,
and denote it by
  $\widehat A\overset{{\tiny\mbox{\rm T-}}\ast}\leq\widehat B$.

Since any dual matrices can do transpose operation,
we suppose   $\widehat A=\varepsilon$ and $\widehat B=2\varepsilon$.
  It is easy to check that $\widehat A^T \widehat A=0=\widehat A^T \widehat B$
and
 $\widehat A \widehat A^T=0=\widehat B \widehat A^T$,
i.e.,
$\widehat A\overset{{\tiny\mbox{\rm T-}}\ast}\leq\widehat B$.
Since
 $\widehat B^T \widehat B=0=\widehat B^T \widehat A$
and
 $\widehat B \widehat B^T=0=\widehat A \widehat B^T$,
we have
$\widehat B\overset{{\tiny\mbox{\rm T-}}\ast}\leq\widehat A$.
Because
 $\widehat A \neq \widehat B$,
   T-star order is not   anti-symmetric.
Therefore,
  T-star order is not a  partial order.

Next, in the following theorem,
we suppose that  DMPGIs of dual matrices exist,
and
consider the relations between D-star partial order and T-star order.

\begin{theorem}
\label{DSPO-Char-3-Th}
Let $\widehat  A,\widehat  B\in\mathbb{D}^{m\times n}$,
and DMPGIs of $\widehat  A$ and $\widehat  B$   exist.
 We get $\widehat A\overset{\tiny\mbox{\rm  D\!-}\ast}\leq\widehat B$
if and only if
 $\widehat A\overset{{\tiny\mbox{\rm T-}}\ast}\leq\widehat B$.
\end{theorem}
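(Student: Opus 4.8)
My approach would be to avoid singular value decompositions entirely and extract the equivalence directly from the four Penrose equations $(\widehat 1)$--$(\widehat 4)$ defining $\widehat A^{\dag}$; note that only the existence of $\widehat A^{\dag}$ is genuinely used, the existence of $\widehat B^{\dag}$ serving only to make the D-star relation meaningful in Definition \ref{DStarPartialOrder-Def}. The first step is to record four one-line identities valid for any dual matrix whose DMPGI exists:
\begin{align*}
\widehat A^{T}\widehat A\widehat A^{\dag}=\widehat A^{T},\qquad
\widehat A^{\dag}\widehat A\widehat A^{T}=\widehat A^{T},\\
\widehat A^{\dag}\bigl(\widehat A^{\dag}\bigr)^{T}\widehat A^{T}=\widehat A^{\dag},\qquad
\widehat A^{T}\bigl(\widehat A^{\dag}\bigr)^{T}\widehat A^{\dag}=\widehat A^{\dag}.
\end{align*}
For instance $\widehat A^{T}\widehat A\widehat A^{\dag}=\widehat A^{T}\bigl(\widehat A\widehat A^{\dag}\bigr)^{T}=\bigl(\widehat A\widehat A^{\dag}\widehat A\bigr)^{T}=\widehat A^{T}$ by $(\widehat 3)$ and $(\widehat 1)$, and the other three are obtained the same way using $(\widehat 4)$ and $(\widehat 2)$, keeping in mind that $(\widehat X\widehat Y)^{T}=\widehat Y^{T}\widehat X^{T}$ and that $\varepsilon$ commutes with every matrix, so the dual unit introduces no ordering or sign subtlety.

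Granting these, both implications are immediate substitutions. If $\widehat A\overset{\tiny\mbox{\rm  D\!-}\ast}\leq\widehat B$, i.e.\ $\widehat A^{\dag}\widehat A=\widehat A^{\dag}\widehat B$ and $\widehat A\widehat A^{\dag}=\widehat B\widehat A^{\dag}$, then
\begin{align*}
\widehat A^{T}\widehat A=\widehat A^{T}\widehat A\widehat A^{\dag}\widehat A=\widehat A^{T}\widehat A\widehat A^{\dag}\widehat B=\widehat A^{T}\widehat B,\\
\widehat A\widehat A^{T}=\widehat A\widehat A^{\dag}\widehat A\widehat A^{T}=\widehat B\widehat A^{\dag}\widehat A\widehat A^{T}=\widehat B\widehat A^{T},
\end{align*}
using $(\widehat 1)$ for the first equality in each chain and the first two identities above for the last, so $\widehat A\overset{{\tiny\mbox{\rm T-}}\ast}\leq\widehat B$. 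Conversely, if $\widehat A^{T}\widehat A=\widehat A^{T}\widehat B$ and $\widehat A\widehat A^{T}=\widehat B\widehat A^{T}$, then, reading the last two identities as $\widehat A^{\dag}=\widehat A^{\dag}\bigl(\widehat A^{\dag}\bigr)^{T}\widehat A^{T}$ and $\widehat A^{\dag}=\widehat A^{T}\bigl(\widehat A^{\dag}\bigr)^{T}\widehat A^{\dag}$,
\begin{align*}
\widehat A^{\dag}\widehat A=\widehat A^{\dag}\bigl(\widehat A^{\dag}\bigr)^{T}\widehat A^{T}\widehat A=\widehat A^{\dag}\bigl(\widehat A^{\dag}\bigr)^{T}\widehat A^{T}\widehat B=\widehat A^{\dag}\widehat B,\\
\widehat A\widehat A^{\dag}=\widehat A\widehat A^{T}\bigl(\widehat A^{\dag}\bigr)^{T}\widehat A^{\dag}=\widehat B\widehat A^{T}\bigl(\widehat A^{\dag}\bigr)^{T}\widehat A^{\dag}=\widehat B\widehat A^{\dag},
\end{align*}
and Definition \ref{DStarPartialOrder-Def} gives $\widehat A\overset{\tiny\mbox{\rm  D\!-}\ast}\leq\widehat B$.

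Since every step is a single substitution, I do not expect a substantial obstacle; the only point demanding care is the order-reversal under transposition of products of dual matrices when checking the four auxiliary identities, and the real ``trick'' is merely recognizing that nothing beyond the Penrose equations is needed. If one preferred a proof in the block-decomposition style of the earlier sections, an alternative route is available: the real parts of the T-star equations give $A\overset\ast\leq B$ by Theorem \ref{RStar-Def}, and then, with $A_0$ in the form (\ref{3.5}) forced by the existence of $\widehat A^{\dag}$, the dual parts of the two T-star equations pin the dual part of $\widehat B$ down exactly to the block form (\ref{DSPO-Char-2}); since that form is equivalent to $\widehat A\overset{\tiny\mbox{\rm  D\!-}\ast}\leq\widehat B$ by Theorem \ref{DSPO-Char-2-Th}, and conversely a quick block multiplication from (\ref{DSPO-Char-2}) recovers the T-star equations, this also closes the argument — at the cost of re-running the bookkeeping already carried out for Theorem \ref{DSPO-Char-2-Th}.
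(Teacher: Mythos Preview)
Your proof is correct and takes a genuinely different route from the paper's. The paper argues entirely through the SVD-based block decompositions established earlier: for ``$\Leftarrow$'' it writes $A,B,A_0,B_0$ in the block forms (\ref{2.1}), (\ref{3.5}), (\ref{3.7}) (using $A\overset\ast\leq B$ from the real part and the existence of both DMPGIs), then multiplies out $\widehat A^{T}\widehat A=\widehat A^{T}\widehat B$ and $\widehat A\widehat A^{T}=\widehat B\widehat A^{T}$ block by block to force $B_1=A_1$, $B_2=A_2-T_1^{-1}A_4^{T}T_2$, $B_3=A_3$, $B_4=A_4-T_2A_2^{T}T_1^{-1}$, $B_7=A_7$, recovering exactly the form (\ref{DSPO-Char-2}) and invoking Theorem \ref{DSPO-Char-2-Th}; for ``$\Rightarrow$'' it simply reads off the T-star equations from (\ref{DSPO-Char-2}). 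This is precisely the ``alternative route'' you sketch in your closing paragraph.

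By contrast, your main argument bypasses all coordinates: the four Penrose equations alone yield the identities $\widehat A^{T}\widehat A\widehat A^{\dag}=\widehat A^{T}$, $\widehat A^{\dag}\widehat A\widehat A^{T}=\widehat A^{T}$, $\widehat A^{\dag}(\widehat A^{\dag})^{T}\widehat A^{T}=\widehat A^{\dag}$, $\widehat A^{T}(\widehat A^{\dag})^{T}\widehat A^{\dag}=\widehat A^{\dag}$, and each implication then collapses to two one-line substitutions. This is markedly shorter, uses nothing about $\widehat B^{\dag}$ beyond its role in making Definition \ref{DStarPartialOrder-Def} well-posed (a point you rightly flag), and would transfer unchanged to any algebraic setting with an involution and a Moore--Penrose inverse satisfying $(\widehat 1)$--$(\widehat 4)$. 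The paper's approach, while heavier, has the virtue of consistency with its overall block-decomposition programme and makes the structure of $\widehat B$ explicit along the way.
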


\begin{proof}
``$\Leftarrow$"
Let
$\widehat  A=A+\varepsilon A_0$
and
$\widehat  B=B+\varepsilon B_0$.
It is easy to check that
 $\widehat A\overset{{\tiny\mbox{\rm T-}}\ast}\leq\widehat B$
if and only if
\begin{subnumcases}{}
\label{DSPO-Char-3-2a}
A\overset\ast\leq B
\\
\label{DSPO-Char-3-2b}
A^{  T}A_0+A_0^{  T}A
=
A^{  T}B_0+A_0^{  T}B,\
AA_0^{  T}+A_0A^{  T}=BA_0^{  T}+B_0A^{  T}.
\end {subnumcases}

Since $A\overset\ast\leq B$ and
  DMPGIs of $\widehat  A,\widehat  B$   exist,
then $A$, $B$, $A_0$ and $B_0$
can be represented in the forms as in  (\ref{2.1}),  (\ref{3.5}) and (\ref{3.7}), respectively.

Applying
$\widehat A^{  T}\widehat A=\widehat A^{  T}\widehat B$
gives
\begin{align}
\nonumber
&
V
\begin{pmatrix}
{T}_1^2&0&0\\0&0&0\\0&0&0
\end{pmatrix}V^{  T}
+\varepsilon
 V\begin{pmatrix}
 {T}_1A_1+A_1^{  T}{T}_1&{T}_1A_2&{T}_1A_3\\
     A_2^{  T}{T}_1&0&0\\
       A_3^{  T}{T}_1&0&0\end{pmatrix}V^{  T}
\\
\nonumber
&
\qquad
=
V\begin{pmatrix}
{T}_1^2&0&0\\0&0&0\\0&0&0\end{pmatrix}
V^{  T}
+\varepsilon V
\begin{pmatrix}{T}_1B_1
+A_1^{  T}{T}_1&{T}_1B_2
+A_4^{  T}{T}_2&{T}_1B_3\\
    A_2^{  T}{T}_1&0&0\\
    A_3^{  T}{T}_1&0&0
\end{pmatrix}V^{  T}.
\end{align}
Then
\begin{align}
\nonumber
\left\{\begin{array}{l}
{T}_1A_1+A_1^{  T}{T}_1
={T}_1B_1+A_1^{  T}{T}_1\\
{T}_1A_2={T}_1B_2+A_4^{  T}{T}_2\\
{T}_1A_3={T}_1B_3\end{array}\right.
\Rightarrow
\left\{\begin{array}{l}
 B_1=A_1\\
  B_2=A_2-{T}_1^{-1}A_4^{  T}{T}_2\\
   B_3=A_3.
\end{array}\right.
\end{align}
It follows from  (\ref{3.7}) that
\begin{align}
\label{DSPO-Char-3-3}
\widehat B=U\begin{pmatrix}
{T}_1&0&0\\
0&{T}_2&0\\
0&0&0
\end{pmatrix}V^{  T}
+\varepsilon
U
\begin{pmatrix}    A_1&    A_2-
{T}_1^{-1}A_4^{  T}{T}_2&    A_3\\
    B_4&B_5&B_6\\
    B_7&B_8&0
\end{pmatrix}V^{  T}.
\end{align}

Since
$\widehat A\widehat A^{  T}=\widehat B\widehat A^{  T}$
and
(\ref{DSPO-Char-3-3}),
we get
\begin{align}
\nonumber
&
U\begin{pmatrix}
{T}_1^2&0&0\\0&0&0\\0&0&0
\end{pmatrix}U^{  T}
+
\varepsilon
U\begin{pmatrix}{T}_1A_1^{  T}
+A_1{T}_1&{T}_1A_4^{  T}
&{T}_1A_7^{  T}\\
    A_4{T}_1&0&0\\
    A_7{T}_1&0&0
\end{pmatrix}U^{  T}
\\
\nonumber
&
\qquad
=U\begin{pmatrix}
{T}_1^2&0&0\\
0&0&0\\
0&0&0\end{pmatrix}U^{  T}
+
\varepsilon
U\begin{pmatrix}
{T}_1A_1^{  T}+A_1{T}_1
&{T}_1A_4^{  T}
&{T}_1A_7^{  T}\\
{T}_2A_2^{  T}+B_4{T}_1&0&0\\
    B_7{T}_1&0&0\end{pmatrix}U^{  T}.
\end{align}
Then
\begin{align}
\nonumber
\left\{\begin{array}{l}A_4{T}_1
={T}_1A_2^{  T}+B_4{T}_1\\
A_7{T}_1=B_7{T}_1
\end{array}\right.
\Rightarrow
\left\{\begin{array}{l}B_4
=A_4-{T}_2A_2^{  T}{T}_1^{-1}\\
B_7=A_7.\end{array}\right.
\end{align}
 It follows from  (\ref{DSPO-Char-3-3})  that
\begin{align}
\label{DSPO-Char-3-4}
\widehat B=U\begin{pmatrix}
{T}_1&0&0\\
0&{T}_2&0\\
0&0&0\end{pmatrix}V^{  T}
+\varepsilon U\begin{pmatrix}
A_1&    A_2-{T}_1^{-1}A_4^{  T}{T}_2&    A_3\\
 A_4-{T}_2A_2^{  T}{T}_1^{-1}&    B_5&B_6\\
  A_7&B_8&0\end{pmatrix}V^{  T}.
\end{align}
Applying (\ref{DSPO-Char-3-4})
and
Theorem \ref{DSPO-Char-2-Th},
we obtain
$\widehat A\overset{\tiny\mbox{\rm  D\!-}\ast}\leq\widehat B$.

``$\Rightarrow$"
\quad
Let
$\widehat A\overset{\tiny\mbox{\rm  D\!-}\ast}\leq\widehat B$,
by applying Theorem \ref{DSPO-Char-2-Th} ,
and it is easy to check that
$\widehat A^{  T}\widehat A=\widehat A^{  T}\widehat  B$
and
$  \widehat A\widehat A^{  T}=\widehat B\widehat A^{  T}$,
that is,
 $\widehat A\overset{{\tiny\mbox{\rm T-}}\ast}\leq\widehat B$.
\end{proof}

\section{P-Star Partial Order}
\label{Sect-4-P-Star-Partial-Order}

In Section \ref{Sect-4-P-Star-Partial-Order} ,
we introduce the   D-star order
and
show that it is a  partial order.
By using the method similar to the D-star order as in (\ref{DStarPartialOrder}),
we introduce the   P-order by using MPDGI in this section.

Let $\widehat  A=A+\varepsilon A_0$
and
$\widehat  B=B+\varepsilon B_0$.
If
\begin{align}
\label{DPSO-Char-1-Def}
\widehat A^p\widehat A=\widehat A^p\widehat B
\mbox{ \  and }
\widehat A\widehat A^p=\widehat B\widehat A^p,
\end{align}
we say that $\widehat  A$ is below $\widehat  B$ under the   P-order,
and if so,
we write
$\widehat A\overset{P}\leq\widehat B$.

It is well known that the existence of DMPGI  of dual matrix needs strict conditions ,
but the MPDGI that is closely related to DMPGI always exists for arbitrary dual matrix.
Therefore,
the  new binary relation
  is different from the D-star order.
It is meaningful to introduce   P-order
and discuss its properties and characterizations.

\begin{theorem}
\label{DPSO-Char-3-Th}
Let $\widehat  A=A+\varepsilon A_0$;
$\widehat  B=B+\varepsilon B_0$;
 DMPGIs of $\widehat  A$ and $\widehat  B$   exist,
then $\widehat A\overset P\leq\widehat B$
if and only if
\begin{align}
\label{DPSO-Char-3-Th-1}
\left\{\begin{array}{l}
A\overset\ast\leq B\\
A^\dagger A_0-A^\dagger A_0A^\dagger A
=A^\dagger B_0-A^\dagger A_0A^\dagger B \\
-AA^\dagger A_0A^\dagger+A_0A^\dagger
=-BA^\dagger A_0A^\dagger+B_0A^\dagger.
\end{array}\right.
\end{align}
\end{theorem}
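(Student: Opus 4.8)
The plan is to follow the proof of Theorem~\ref{DSPO-Char-1-Th} almost verbatim, with the MPDGI $\widehat A^{p}$ in place of the DMPGI $\widehat A^{\dag}$. By~(\ref{MPDGI-Def}) one has $\widehat A^{p}=A^{\dag}-\varepsilon A^{\dag}A_0A^{\dag}$, and by Theorem~\ref{MPDGI-Cha} this exists for every dual matrix, so there is no existence issue on the left-hand sides of~(\ref{DPSO-Char-1-Def}). First I would substitute $\widehat A=A+\varepsilon A_0$ and $\widehat B=B+\varepsilon B_0$ into the four products and expand, discarding every term carrying $\varepsilon^{2}=0$. This gives $\widehat A^{p}\widehat A=A^{\dag}A+\varepsilon\bigl(A^{\dag}A_0-A^{\dag}A_0A^{\dag}A\bigr)$, $\widehat A^{p}\widehat B=A^{\dag}B+\varepsilon\bigl(A^{\dag}B_0-A^{\dag}A_0A^{\dag}B\bigr)$, $\widehat A\widehat A^{p}=AA^{\dag}+\varepsilon\bigl(A_0A^{\dag}-AA^{\dag}A_0A^{\dag}\bigr)$ and $\widehat B\widehat A^{p}=BA^{\dag}+\varepsilon\bigl(B_0A^{\dag}-BA^{\dag}A_0A^{\dag}\bigr)$.

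Next I would use that two dual matrices coincide exactly when their real parts and their $\varepsilon$-parts coincide. Equating the real parts in the two identities of~(\ref{DPSO-Char-1-Def}) yields $A^{\dag}A=A^{\dag}B$ and $AA^{\dag}=BA^{\dag}$, which by the equivalence ${\rm (1)}\Leftrightarrow{\rm (2)}$ of Theorem~\ref{RStar-Def} is precisely $A\overset\ast\leq B$; equating the $\varepsilon$-parts yields the second and third lines of~(\ref{DPSO-Char-3-Th-1}). For the converse, assuming the three conditions in~(\ref{DPSO-Char-3-Th-1}) and reading the same computation backwards shows $\widehat A^{p}\widehat A=\widehat A^{p}\widehat B$ and $\widehat A\widehat A^{p}=\widehat B\widehat A^{p}$, i.e.\ $\widehat A\overset P\leq\widehat B$.

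The calculation presents no genuine obstacle; the only point demanding care is the bookkeeping of the first-order $\varepsilon$-terms. The contrast with Theorem~\ref{DSPO-Char-1-Th} worth flagging is that here no structural restriction on $A_0$ or $B_0$ surfaces, precisely because $\widehat A^{p}$ always exists while $\widehat A^{\dag}$ need not; in particular the hypothesis that the DMPGIs of $\widehat A$ and $\widehat B$ exist is not actually used in the argument, and is retained only so that this characterization can be placed side by side with the D-star order in Section~\ref{Sect-5-Relationships-Star-Partial-Order}.
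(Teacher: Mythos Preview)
Your proposal is correct and follows essentially the same route as the paper: both arguments substitute $\widehat A^{p}=A^{\dag}-\varepsilon A^{\dag}A_0A^{\dag}$ into the four products in~(\ref{DPSO-Char-1-Def}), expand modulo $\varepsilon^{2}=0$, and then equate real and dual parts to obtain~(\ref{DPSO-Char-3-Th-1}). Your closing observation that the DMPGI hypothesis on $\widehat A$ and $\widehat B$ is not actually invoked in the argument is a correct and worthwhile remark that the paper does not make explicit.
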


\begin{proof}
Denote
$\widehat A^p=A^\dagger+\varepsilon R_p$,
where $R_p=-A^\dagger A_0A^\dagger$,
then we have
\begin{align}
\label{DPSO-Char-1-Def-1}
\left\{\begin{array}{l}
\widehat A^p\widehat A
=\left(A^\dagger+\varepsilon R_p\right)\left(A+\varepsilon A_0\right)
=A^\dagger A+\varepsilon\left(A^\dagger A_0+R_pA\right)\\
\widehat A^p\widehat B
=\left(A^\dagger+\varepsilon R_p\right)\left(B+\varepsilon B_0\right)
=A^\dagger B+\varepsilon\left(A^\dagger B_0+R_pB\right)
\end{array}\right.
\end{align}
and
\begin{align}
\label{DPSO-Char-1-Def-2}
\left\{\begin{array}{l}
\widehat A\widehat A^p
=\left(A+\varepsilon A_0\right)\left(A^\dagger+\varepsilon R_p\right)
=AA^\dagger+\varepsilon\left(AR_p+A_0A^\dagger\right)\\
\widehat B\widehat A^p
=\left(B+\varepsilon B_0\right)\left(A^\dagger+\varepsilon R_p\right)
=BA^\dagger+\varepsilon\left(BR_p+B_0A^\dagger\right).
\end{array}\right.
\end{align}
Applying
 (\ref{DPSO-Char-1-Def-1}) and  (\ref{DPSO-Char-1-Def-2}),
we get that
 (\ref{DPSO-Char-1-Def})
 is equivalent to
\begin{align}
\label{DPSO-Char-1-Def-3}
\left\{\begin{array}{l}
A^\dagger A
=A^\dagger B,\;AA^\dagger
=BA^\dagger
\\
A^\dagger A_0+R_pA
=A^\dagger B_0+R_pB,\;
AR_p+A_0A^\dagger
=BR_p+B_0A^\dagger.
\end{array}\right.
\end{align}
So, we have (\ref{DPSO-Char-3-Th-1} ).
\end{proof}

  \begin{theorem}
 \label{DPSO-Char-1-Th}
Let $\widehat A\overset{P}\leq\widehat B$.
Then
there exist orthogonal matrices $U$ and $V$
 such that
\begin{align}
 \label{DPSO-Char-1}
\left\{\begin{array}{l}\widehat A
=U\begin{pmatrix}    T_1&0&0\\0&0&0\\0&0&0\end{pmatrix}V^{  T}
+\varepsilon U\begin{pmatrix}
A_1&A_2&A_3\\A_4&A_5&A_6\\A_7&A_8&A_9\end{pmatrix}V^{  T}
\\
\widehat B
=U\begin{pmatrix}    T_1&0&0\\0&    T_2&0\\0&0&0
\end{pmatrix}V^{  T}
+
\varepsilon U\begin{pmatrix}
A_1&A_2&A_3\\A_4&    B_5&B_6\\A_7&B_8&B_9
\end{pmatrix}V^{  T},
\end{array}\right.
\end{align}
where
$T_1$ and $T_2$ are diagonal positive definite matrices.
\end{theorem}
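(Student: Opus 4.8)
The plan is to imitate the proof of Theorem~\ref{DSPO-Char-2-Th}, separating each of the two defining identities of the P-order into its real and dual parts. Write $\widehat A^p=A^{\dag}+\varepsilon R_p$ with $R_p=-A^{\dag}A_0A^{\dag}$ (see (\ref{MPDGI-Def}) and the proof of Theorem~\ref{DPSO-Char-3-Th}). The real parts of $\widehat A^p\widehat A=\widehat A^p\widehat B$ and $\widehat A\widehat A^p=\widehat B\widehat A^p$ are then $A^{\dag}A=A^{\dag}B$ and $AA^{\dag}=BA^{\dag}$, so $A\overset\ast\leq B$ by Theorem~\ref{RStar-Def}; fix once and for all orthogonal matrices $U,V$ bringing $A$ and $B$ to the forms (\ref{2.1}), where $T_1\in\mathbb{R}^{a\times a}$ and $T_2\in\mathbb{R}^{(b-a)\times(b-a)}$ are positive definite diagonal, $a=\rk(A)$, $b=\rk(B)$.

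Since this pair $U,V$ also realises an SVD of $A$, Theorem~\ref{MPDGI-Cha} expresses $\widehat A^p$ in closed form relative to $U,V$, with $A_1$ the leading $a\times a$ block of $U^{T}A_0V$. I would then partition $A_0$ and $B_0$ conformally with (\ref{2.1}) as $3\times3$ block matrices, relabelling the blocks $A_1,\dots,A_9$ and $B_1,\dots,B_9$, and compute the dual parts $A^{\dag}A_0+R_pA$, $A^{\dag}B_0+R_pB$, $AR_p+A_0A^{\dag}$, $BR_p+B_0A^{\dag}$ in block form. Because $A^{\dag}$ and $R_p$ are supported on the $(1,1)$ block, $A^{\dag}A_0+R_pA$ and $A^{\dag}B_0+R_pB$ are supported on the first block row, while $AR_p+A_0A^{\dag}$ and $BR_p+B_0A^{\dag}$ are supported on the first block column.

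Equating the dual part of $\widehat A^p\widehat A$ with that of $\widehat A^p\widehat B$ then forces, after cancelling the invertible factors $T_1^{-1}$, the relations $B_1=A_1$, $B_2=A_2$, $B_3=A_3$; equating the dual part of $\widehat A\widehat A^p$ with that of $\widehat B\widehat A^p$ forces $B_4=A_4$ and $B_7=A_7$. Substituting these five identities into the block form of $B_0$ yields exactly the representation (\ref{DPSO-Char-1}), with $B_5,B_6,B_8,B_9$ left unconstrained.

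This is essentially a bookkeeping computation and presents no genuine difficulty; the step deserving most attention is the set-up, namely noting that the orthogonal pair $U,V$ furnished by the star-order decomposition (\ref{2.1}) of $A$ and $B$ is simultaneously an SVD pair of $A$, so that Theorem~\ref{MPDGI-Cha} applies and supplies the $\widehat A^p$ used above, and then tracking the nine blocks coherently through the four products. Observe that, in contrast to Theorems~\ref{DSPO-Char-1-Th}--\ref{DSPO-Char-2-Th}, no hypothesis on the existence of DMPGIs is required, since $\widehat A^p$ always exists; this is also why the statement is one-directional, a decomposition of the displayed shape being necessary for $\widehat A\overset{P}{\leq}\widehat B$.
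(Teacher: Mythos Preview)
Your proposal is correct and follows essentially the same route as the paper: extract $A\overset\ast\leq B$ from the real parts, pass to the simultaneous SVD-type block form (\ref{2.1}) via Theorem~\ref{RStar-Def}, write $\widehat A^p$ as in Theorem~\ref{MPDGI-Cha}, and compare the dual parts of the four products blockwise to force $B_1=A_1,\ B_2=A_2,\ B_3=A_3,\ B_4=A_4,\ B_7=A_7$. Your remark that the argument needs no DMPGI hypothesis and that the implication is one-directional is also in line with the paper.
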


\begin{proof}
$\Rightarrow$
\quad
Let $\widehat A\overset{P}\leq\widehat B$.
Then applying Theorem  \ref{RStar-Def} ,
we get $A$ and $B$ are of forms as in (\ref{2.1}).
Denote
\begin{align}
\label{DPSPO-Char-1-3}
 \widehat A
=U\begin{pmatrix}
  T_1&0&0\\
0&0&0\\
0&0&0
\end{pmatrix}V^{  T}
+\varepsilon U\begin{pmatrix}
A_1&A_2&A_3\\
A_4&A_5&A_6\\
A_7&A_8&A_9
\end{pmatrix}V^{  T}.
\end{align}
We have
\begin{align}
\label{DPSPO-Char-1-12}
\widehat A^p
=V\begin{pmatrix}
    T_1^{-1}&0&0\\0&0&0\\0&0&0
\end{pmatrix}U^{  T}
+\varepsilon V
\begin{pmatrix}
-{   {  T}_1^{-1}}A_1{   {  T}_1^{-1}}
&0&0\\
0&0&0\\
0&0&0
\end{pmatrix}U^{  T}.
\end{align}
 Correspondingly, partition matrix $U^{  T} B_0 V$ as follows
\begin{align}
\label{DPSPO-Char-1-13}
U^{  T} B_0 V
=
\begin{pmatrix}
B_1&B_2&B_3\\
B_4&B_5&B_6\\
B_7&B_8&B_9
\end{pmatrix}.
\end{align}

Applying(\ref{2.1}),
 (\ref{DPSPO-Char-1-3}),
 (\ref{DPSPO-Char-1-12})
 and
 (\ref{DPSPO-Char-1-13}),
we get
\begin{align}
\left\{\begin{array}{l}
\widehat A^p\widehat A
=
V\begin{pmatrix}
    I&0&0\\0&0&0\\0&0&0
\end{pmatrix}V^{  T}
+
\varepsilon V\begin{pmatrix}
0&T_1^{-1}A_2&T_1^{-1}A_3\\
0&0&0\\
0&0&0
\end{pmatrix}V^{  T}
\\
\widehat A^p\widehat B
=
V\begin{pmatrix}
    I&0&0\\
0&0&0\\
0&0&0
\end{pmatrix}V^{  T}
+
\varepsilon V\begin{pmatrix}
T_1^{-1}B_1-T_1^{-1}A_1&T_1^{-1}B_2&T_1^{-1}B_3\\
0&0&0\\
0&0&0
\end{pmatrix}V^{  T}.
\end{array}\right.
\end{align}

Since
$\widehat A^p\widehat A
=\widehat A^p\widehat B$,
we obtain
\begin{align}
\label{2.1-1}
 B_1=A_1,\
 B_2=A_2\
 \mbox{ and \ } B_3=A_3.
\end{align}

Similarly, by applying
$\widehat A\widehat A^p=\widehat B\widehat A^p$,
  $ B_1=A_1$
and
\begin{align}
\left\{\begin{array}{l}
\widehat A\widehat A^p
=U\begin{pmatrix}    I&0&0\\0&0&0\\0&0&0
\end{pmatrix}U^{  T}
+\varepsilon U\begin{pmatrix}
0&0&0\\A_4T_1^{-1}&0&0\\A_7T_1^{-1}&0&0
\end{pmatrix}U^{  T}
\\
\widehat B\widehat A^p
=
U\begin{pmatrix}
    I&0&0\\0&0&0\\0&0&0
\end{pmatrix}U^{  T}
+\varepsilon
U\begin{pmatrix}
0&0&0\\B_4T_1^{-1}&0&0\\
B_7T_1^{-1}&0&0
\end{pmatrix}U^{  T},
\end{array}\right.
\end{align}
we obtain
\begin{align}
\label{2.1-2}
 B_4=A_4\
 \mbox{ and \ }  B_7=A_7.
\end{align}
Therefore,
applying
 (\ref{DPSPO-Char-1-3}),
(\ref{2.1-1})
and
 (\ref{2.1-2})
gives (\ref{DPSO-Char-1}).
\end{proof}

\begin{example}
 \label{DPSO-Char-1-Ex}
Let
\begin{align}
\nonumber
\widehat A=\begin{pmatrix}
1&0\\
0&0
\end{pmatrix}+
\varepsilon\begin{pmatrix}
1&0\\
0&0
\end{pmatrix},\;
\widehat B=\begin{pmatrix}
1&0\\0&0
\end{pmatrix}
+
\varepsilon\begin{pmatrix}
1&0\\0&1
\end{pmatrix},
\end{align}
then
\begin{align}
\nonumber
 \widehat A^p
 =
 \widehat B^p
 =
 \begin{pmatrix}1&0\\0&0\end{pmatrix}+
 \varepsilon\begin{pmatrix}
 -1&0\\0&0\end{pmatrix}.
\end{align}
Applying
Theorem \ref{DPSO-Char-1-Th} ,
we have
$\widehat A\overset{P}\leq\widehat B$
and
$\widehat B\overset{P}\leq \widehat A$.
Since $\widehat A \neq \widehat B$,
we get that the binary relation is not anti-symmetric.
Therefore,  the P-order  is not a partial order.
 \end{example}

\begin{theorem}
 \label{DPSPO-Char-1-Th}
Let
 DMPGIs  of
$\widehat  A\in \mathbb{D}^{m\times n}$
and
$\widehat  B\in \mathbb{D}^{m\times n}$   exist.
Denote
$$\widehat A\overset{\tiny\mbox{\rm  P\!-}\ast}\leq\widehat B:
\widehat A^p\widehat A=\widehat A^p\widehat B
\mbox{ \  and }
\widehat A\widehat A^p=\widehat B\widehat A^p.$$
We call it the P-star order.
It is a partial order.

Furthermore,
there exist $U$ and $V$ that are orthogonal matrices
 such that
\begin{align}
\label{DPSPO-Char-1-1}
\left\{\begin{array}{l}
\widehat A
=
U\begin{pmatrix}
  T_1&0&0\\0&0&0\\0&0&0\end{pmatrix}
V^{  T}
+\varepsilon U
\begin{pmatrix}
A_1&A_2&A_3\\A_4&0&0\\A_7&0&0
\end{pmatrix}V^{  T}\\
\widehat B
=
U\begin{pmatrix}
  T_1&0&0\\0&  T_2&0\\0&0&0
\end{pmatrix}V^{  T}
+\varepsilon U\begin{pmatrix}
A_1&A_2&A_3\\A_4&    B_5&B_6\\A_7&B_8&0
\end{pmatrix}V^{  T},
\end{array}\right.
\end{align}
where
$T_1$ and $T_2$ are diagonal positive definite matrices.
\end{theorem}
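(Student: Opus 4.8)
The plan is to deduce the statement from facts already in hand: Theorem~\ref{DPSO-Char-1-Th} gives the block shape of any P-related pair, Theorem~\ref{lemma-2.1} controls which dual matrices have DMPGIs, and the order axioms then follow as in the proof of Theorem~\ref{DSPO-Char-Partial-Th}. I would first obtain the decomposition~(\ref{DPSPO-Char-1-1}). Since $\widehat A\overset{\tiny\mbox{\rm  P\!-}\ast}\leq\widehat B$ means in particular $\widehat A\overset{P}\leq\widehat B$, Theorem~\ref{DPSO-Char-1-Th} supplies orthogonal $U,V$ putting $\widehat A,\widehat B$ in the form~(\ref{DPSO-Char-1}), whose $\varepsilon$-parts still carry free blocks $A_5,A_6,A_8,A_9$ (for $\widehat A$) and $B_5,B_6,B_8,B_9$ (for $\widehat B$). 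A direct computation from the SVD of $A$ in~(\ref{DPSO-Char-1}) shows that, in the $U,V$-basis, $\left(I_m-AA^\dagger\right)A_0\left(I_n-A^\dagger A\right)$ has all blocks zero except the lower-right $\begin{pmatrix}A_5&A_6\\A_8&A_9\end{pmatrix}$, so condition~$(2)$ of Theorem~\ref{lemma-2.1} for $\widehat A$ forces $A_5=A_6=A_8=A_9=0$; likewise $\left(I_m-BB^\dagger\right)B_0\left(I_n-B^\dagger B\right)$ reduces to its $(3,3)$-block $B_9$, so the existence of $\widehat B^\dagger$ forces $B_9=0$. This is exactly~(\ref{DPSPO-Char-1-1}).

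For the order axioms: reflexivity is immediate. For anti-symmetry, assuming also $\widehat B\overset{\tiny\mbox{\rm  P\!-}\ast}\leq\widehat A$, the first line of~(\ref{DPSO-Char-3-Th-1}) (via Theorem~\ref{DPSO-Char-3-Th}) gives $A\overset\ast\leq B$ and $B\overset\ast\leq A$, hence $A=B$ by anti-symmetry of the star order on real matrices; then ${\rk}(A)={\rk}(B)$ collapses the $T_2$-block in~(\ref{DPSPO-Char-1-1}) to size zero, so the $\varepsilon$-parts of $\widehat A,\widehat B$ reduce to the same matrix in $A_1,A_2,A_3,A_4,A_7$ and $\widehat A=\widehat B$. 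For transitivity, suppose in addition that $\widehat C=C+\varepsilon C_0$ has a DMPGI and $\widehat B\overset{\tiny\mbox{\rm  P\!-}\ast}\leq\widehat C$; then $A\overset\ast\leq B\overset\ast\leq C$ yields $A\overset\ast\leq C$, and, refining the SVD to a common partition $A=U\,{\rm diag}(T_1,0,0,0)V^{T}$, $B=U\,{\rm diag}(T_1,T_2,0,0)V^{T}$, $C=U\,{\rm diag}(T_1,T_2,T_3,0)V^{T}$, one writes $A_0,B_0,C_0$ in this basis using~(\ref{DPSPO-Char-1-1}) for the pairs $(\widehat A,\widehat B)$ and $(\widehat B,\widehat C)$ together with the DMPGI normalizations, and checks directly — exactly as in the proof of Theorem~\ref{DSPO-Char-Partial-Th} — that $\widehat A^p\widehat A=\widehat A^p\widehat C$ and $\widehat A\widehat A^p=\widehat C\widehat A^p$, i.e. $\widehat A\overset{\tiny\mbox{\rm  P\!-}\ast}\leq\widehat C$.

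The step I expect to be the main obstacle is transitivity, which is a bookkeeping-heavy block computation. The structural fact that makes it go through is that the $\varepsilon$-part of $\widehat A^p=A^\dagger-\varepsilon A^\dagger A_0A^\dagger$ is concentrated in the leading $(1,1)$ block, so each of $\widehat A^p\widehat A,\ \widehat A^p\widehat C,\ \widehat A\widehat A^p,\ \widehat C\widehat A^p$ only sees the blocks that $\widehat B$ and $\widehat C$ share with $\widehat A$; the extra blocks $B_5,B_6,B_8$, their $\widehat C$-analogues, and the diagonal pieces $T_2,T_3$ never enter these products — which is precisely why the P-star order behaves more rigidly than the D-star order here.
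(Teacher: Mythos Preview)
Your proposal is correct. The derivation of the decomposition~(\ref{DPSPO-Char-1-1}) and the treatment of reflexivity and anti-symmetry match the paper's argument essentially step for step (your reading of anti-symmetry --- that ${\rk}(A)={\rk}(B)$ makes the $T_2$-block empty --- is if anything cleaner than the paper's phrasing ``$T_2=0$'' followed by a DMPGI argument to kill $B_5,B_6,B_8$).

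For transitivity, however, you and the paper take different routes. You propose to refine to a common $4\times4$ SVD basis for $A,B,C$ and verify $\widehat A^p\widehat A=\widehat A^p\widehat C$ and $\widehat A\widehat A^p=\widehat C\widehat A^p$ by direct block computation, mirroring the D-star transitivity proof in Theorem~\ref{DSPO-Char-Partial-Th}; this works for exactly the reason you identify, namely that the $\varepsilon$-part of $\widehat A^p$ lives entirely in the $(1,1)$ block. The paper instead stays with the $3\times3$ form~(\ref{DPSPO-Char-1-1}) and extracts from it the absorption identities
\[
\widehat A^p\widehat A\,\widehat B^p\widehat B=\widehat A^p\widehat A,\quad
\widehat A^p\widehat A\,\widehat B^p=\widehat A^p,\quad
\widehat B\widehat B^p\,\widehat A\widehat A^p=\widehat A\widehat A^p,\quad
\widehat B^p\,\widehat A\widehat A^p=\widehat A^p,
\]
after which left- and right-multiplying the relations $\widehat B^p\widehat B=\widehat B^p\widehat C$ and $\widehat B\widehat B^p=\widehat C\widehat B^p$ by $\widehat A^p\widehat A$ and $\widehat A\widehat A^p$ yields transitivity in one line. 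The paper's approach never needs to write $\widehat C$ in block form or set up the $4\times4$ partition, and the absorption identities are reusable; your approach is more computational but entirely self-contained given the tools you have already assembled.
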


\begin{proof}
Let $\widehat  A=A+\varepsilon A_0$,
 $\widehat  B=B+\varepsilon B_0\in \mathbb{D}^{m\times n}$.
Since
$\widehat A^p\widehat A=\widehat A^p\widehat B$,
$\widehat A\widehat A^p=\widehat B\widehat A^p$
and  DMPGIs of $\widehat  A$ and $\widehat  B$   exist,
by applying
Theorem \ref{lemma-2.1}
and
Theorem \ref{DPSO-Char-1-Th} ,
 we get  that $\widehat  A$ and $\widehat  B$
 are of the forms  as in (\ref{DPSPO-Char-1-1}).

Next, we show that P-star order satisfys reflexivity, the anti-symmetry, transitivity, respectively.

1)  Since $\widehat A\overset{P}\leq\widehat A$,  reflexivity is self-evident.

2)
Let  $\widehat  A\overset{P}\leq\widehat  B$ and
$\widehat  B\overset{P}\leq\widehat  A$.
Hence,
we get $   A\overset\ast\leq   B$ and
$   B\overset\ast\leq   A$.
Since ``$\overset\ast\leq  $" is a partial order, i.e., $A=B$.
Let  $\widehat  A$ and $\widehat  B$ be of the form as in (\ref{DPSPO-Char-1-1}).
Then ${T}_2=0$
and
\begin{align}
\label{DPSPO-Char-1-2}
\widehat B=
U\begin{pmatrix} {T}_1&0&0\\0&0&0\\0&0&0\end{pmatrix}
V^{  T}
+\varepsilon U\begin{pmatrix}
A_1&A_2 &A_3\\
A_4 &   B_5&B_6\\
A_7&B_8&0\end{pmatrix}V^{  T}.
\end{align}
Since the DMPGI of $\widehat  B$ exists,
by using Theorem \ref{lemma-2.1} ,
we have $\left( I_m-BB^{\dag} \right) B_0 \left( I_n-B^{\dag}B \right)=0$.
It follows
that
$B_5=0$,
$B_6=0$
and
$B_8=0$.
Therefore,
 $\widehat  A= \widehat  B$.
So,
the anti-symmetry holds.

3)
Next, we check the transitivity.

Since $\widehat A\overset{\tiny\mbox{\rm  P\!-}\ast}\leq\widehat B$, then
$\widehat  A$ and $\widehat  B$ are of the form as in (\ref{DPSPO-Char-1-1}).
Therefore, we obtain
\begin{align}
\label{DPSPO-Char-1-4}
\left\{\begin{array}{l}
\widehat A^p\widehat A
=
V\begin{pmatrix}
    I&0&0\\0&0&0\\0&0&0
\end{pmatrix}V^{  T}
+\varepsilon V\begin{pmatrix}
0&{T}_1^{-1}A_2&{T}_1^{-1}A_3\\0&0&0\\0&0&0
\end{pmatrix}V^{  T}
\\
\widehat B^p\widehat B
=V\begin{pmatrix}
    I&0&0\\0&    I&0\\0&0&0\end{pmatrix}V^{  T}
+\varepsilon V\begin{pmatrix}
0&0&{T}_1^{-1}A_3\\0&0&{T}_2^{-1}B_6\\0&0&0
\end{pmatrix}V^{  T},
\end{array}\right.
\end{align}
and
\begin{align}
\label{DPSPO-Char-1-5}
\left\{\begin{array}{l}
\widehat A^p\widehat A\widehat B^p
=
V\begin{pmatrix}
  {T}_1^{-1}&0&0\\0&0&0\\0&0&0
\end{pmatrix}V^{  T}
+\varepsilon V
\begin{pmatrix}
-{  {T}_1^{-1}}A_1{  {T}_1^{-1}}&0&0\\0&0&0\\0&0&0
\end{pmatrix}V^{  T}
\\
\widehat A^p\widehat A\widehat B^p\widehat B=V\begin{pmatrix}    I&0&0\\0&
0&0\\0&0&0
\end{pmatrix}V^{  T}+
\varepsilon V\begin{pmatrix}0&{T}_1^{-1}A_2&{T}_1^{-1}A_3\\0&0&0\\0&0&0
\end{pmatrix}V^{  T}.
\end{array}\right.
\end{align}
Applying (\ref{DPSPO-Char-1-4}) and (\ref{DPSPO-Char-1-5})
gives
\begin{align}
\label{DPSPO-Char-1-6}
\widehat A^p\widehat A\widehat B^p\widehat B=\widehat A^p\widehat A
 \mbox{ \ and \ }
\widehat A^p\widehat A\widehat B^p=\widehat A^p.
\end{align}

In the same way,
we get
\begin{align}
\label{DPSPO-Char-1-7}
\left\{\begin{array}{l}
\widehat A\widehat A^p
=U\begin{pmatrix}
    I&0&0\\0&0&0\\0&0&0
\end{pmatrix}U^{  T}
+\varepsilon U\begin{pmatrix}
0&0&0\\A_4T_1^{-1}&0&0\\A_7{T}_1^{-1}&0&0
\end{pmatrix}U^{  T}
\\
\widehat B\widehat B^p
=U\begin{pmatrix}
    I&0&0\\0&    I&0\\0&0&0\end{pmatrix}U^{  T}
+\varepsilon U\begin{pmatrix}
0&0&0\\A_4 {T}_1^{-1}&0&0\\A_7 {T}_1^{-1}&0&0
\end{pmatrix}U^{  T},
\end{array}\right.
\end{align}
and
\begin{align}
\label{DPSPO-Char-1-8}
\left\{\begin{array}{l}
\widehat B\widehat B^p\widehat A\widehat A^p
=U\begin{pmatrix}
    I&0&0\\0&0&0\\0&0&0\end{pmatrix}U^{  T}
+\varepsilon U\begin{pmatrix}
0&0&0\\A_4{T}_1^{-1}&0&0\\A_7{T}_1^{-1}&0&0
\end{pmatrix}U^{  T}
\\
\widehat B^p\widehat A\widehat A^p=
V\begin{pmatrix}
  {T}_1^{-1}&0&0\\0&0&0\\0&0&0
\end{pmatrix}U^{  T}
+\varepsilon V\begin{pmatrix}
-{ { {T}}_1^{-1}}A_1{ { {T}}_1^{-1}}&0&0\\0&0&0\\0&0&0
\end{pmatrix}U^{  T}.
\end{array}\right.
\end{align}
Applying (\ref{DPSPO-Char-1-7}) and (\ref{DPSPO-Char-1-8})
gives
\begin{align}
\label{DPSPO-Char-1-9}
\widehat B\widehat B^p\widehat A\widehat A^p=\widehat A\widehat A^p
\mbox{ \ and \ }
\widehat B^p\widehat A\widehat A^p=\widehat A^p.
\end{align}

Let  $\widehat B\overset{\tiny\mbox{\rm  P\!-}\ast}\leq\widehat C$,
that is, $\widehat B^p\widehat B=\widehat B^p\widehat C$ and
$\widehat B\widehat B^p=\widehat C\widehat B^p$.
Then
 \begin{align}
 \nonumber
\widehat A^p\widehat A\widehat B^p\widehat B=\widehat A^p\widehat A\widehat B^p\widehat C
\mbox{ \ and \ }
\widehat B\widehat B^p\widehat A\widehat A^p=\widehat C\widehat B^p\widehat A\widehat A^p,
\end{align}
It follows from
  (\ref{DPSPO-Char-1-6}) and (\ref{DPSPO-Char-1-9})
that
 \begin{align}
 \label{DPSPO-Char-1-10}
\widehat A^p\widehat A =\widehat A^p \widehat C
\mbox{ \ and \ }
 \widehat A\widehat A^p=\widehat C \widehat A^p,
\end{align}
that is,
  $\widehat A\overset{\tiny\mbox{\rm  P\!-}\ast}\leq\widehat C$.
  Therefore, the transitivity holds.
\end{proof}

\begin{theorem}
 \label{DPSPO-Char-2-Th}
Let $\widehat  A=A+\varepsilon A_0$;
$\widehat  B=B+\varepsilon B_0$;
DMPGIs of $\widehat  A$ and  $\widehat  B$ exist.
Then
$\widehat A\overset{\tiny\mbox{\rm  P\!-}\ast}\leq\widehat B$
if and only if
\begin{align}
\label{DPSPO-Char-1-11}
\left\{
\begin{aligned}
  A^T A& =  A^TB, \
  A A^T =B A^T
   \\
 A^TA_0&= A^TB_0, \
  A_0A^T=B_0A^T.
 \end{aligned}
 \right.
\end{align}
\end{theorem}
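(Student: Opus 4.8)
The plan is to reduce everything to one elementary fact about real Moore--Penrose inverses and then quote the characterization of the $P$-order already in Theorem~\ref{DPSO-Char-3-Th}. The elementary fact is: for real matrices of compatible sizes, $P^{\dag}Q=P^{\dag}R$ iff $P^{T}Q=P^{T}R$, and symmetrically $QP^{\dag}=RP^{\dag}$ iff $QP^{T}=RP^{T}$. This follows from the identities $P^{\dag}=(P^{T}P)^{\dag}P^{T}$ and $P^{T}=P^{T}PP^{\dag}$ (and their transposes) and is precisely the mechanism behind the equivalence of items (2) and (3) in Theorem~\ref{RStar-Def}.

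Since the DMPGIs of $\widehat A$ and $\widehat B$ exist, the relation $\widehat A\overset{\tiny\mbox{\rm  P\!-}\ast}\leq\widehat B$ is by definition the same relation $\widehat A^p\widehat A=\widehat A^p\widehat B$, $\widehat A\widehat A^p=\widehat B\widehat A^p$ studied in Theorem~\ref{DPSO-Char-3-Th}, so that theorem applies and $\widehat A\overset{\tiny\mbox{\rm  P\!-}\ast}\leq\widehat B$ is equivalent to the system (\ref{DPSO-Char-3-Th-1}). I would then show (\ref{DPSO-Char-3-Th-1}) $\Leftrightarrow$ (\ref{DPSPO-Char-1-11}). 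From $A\overset\ast\leq B$ one reads off, via Theorem~\ref{RStar-Def}, both $A^{\dag}A=A^{\dag}B$, $AA^{\dag}=BA^{\dag}$ and $A^{T}A=A^{T}B$, $AA^{T}=BA^{T}$, the latter pair being half of (\ref{DPSPO-Char-1-11}). Substituting $A^{\dag}A=A^{\dag}B$ into the second line of (\ref{DPSO-Char-3-Th-1}) makes the terms $A^{\dag}A_{0}A^{\dag}A$ and $A^{\dag}A_{0}A^{\dag}B$ cancel, leaving $A^{\dag}A_{0}=A^{\dag}B_{0}$, which by the elementary fact is equivalent to $A^{T}A_{0}=A^{T}B_{0}$; the third line collapses in the same way to $A_{0}A^{\dag}=B_{0}A^{\dag}$, equivalently $A_{0}A^{T}=B_{0}A^{T}$. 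Hence (\ref{DPSPO-Char-1-11}) holds.

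For the converse I would run the computation backwards: (\ref{DPSPO-Char-1-11}) gives $A\overset\ast\leq B$ by Theorem~\ref{RStar-Def}, hence $A^{\dag}A=A^{\dag}B$ and $AA^{\dag}=BA^{\dag}$, while the elementary fact converts $A^{T}A_{0}=A^{T}B_{0}$, $A_{0}A^{T}=B_{0}A^{T}$ into $A^{\dag}A_{0}=A^{\dag}B_{0}$, $A_{0}A^{\dag}=B_{0}A^{\dag}$. Feeding these four identities into the right-hand sides of the last two lines of (\ref{DPSO-Char-3-Th-1}) reproduces those equations, so (\ref{DPSO-Char-3-Th-1}) holds and therefore $\widehat A\overset{\tiny\mbox{\rm  P\!-}\ast}\leq\widehat B$.

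There is no serious obstacle; the only points needing care are keeping the left- and right-hand versions of the Moore--Penrose/transpose equivalence straight, and performing the cancellation of the cross term $A^{\dag}A_{0}A^{\dag}A$ in the right order, i.e. first extracting the real-part identities $A^{\dag}A=A^{\dag}B$, $AA^{\dag}=BA^{\dag}$ and only then simplifying the $\varepsilon$-part equations. Alternatively one could bypass Theorem~\ref{DPSO-Char-3-Th} and argue from canonical forms: by Theorem~\ref{DPSPO-Char-1-Th} the relation $\widehat A\overset{\tiny\mbox{\rm  P\!-}\ast}\leq\widehat B$ forces the block forms (\ref{DPSPO-Char-1-1}), from which $A^{T}A=A^{T}B$, $A^{T}A_{0}=A^{T}B_{0}$, $AA^{T}=BA^{T}$, $A_{0}A^{T}=B_{0}A^{T}$ are read off directly; conversely, from (\ref{DPSPO-Char-1-11}) together with Theorem~\ref{RStar-Def}, Theorem~\ref{lemma-2.1}, and a comparison of blocks one recovers (\ref{DPSPO-Char-1-1}) and then verifies $\widehat A^{p}\widehat A=\widehat A^{p}\widehat B$, $\widehat A\widehat A^{p}=\widehat B\widehat A^{p}$ by the computation already used in the proof of Theorem~\ref{DPSPO-Char-1-Th}. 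This route is longer but stays entirely within the SVD bookkeeping of the paper.
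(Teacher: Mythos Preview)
Your primary argument is correct and takes a genuinely different route from the paper. The paper proves both directions through the SVD block forms: for $\Rightarrow$ it invokes Theorem~\ref{DPSPO-Char-1-Th} to obtain the canonical form (\ref{DPSPO-Char-1-1}) and then reads off the four identities in (\ref{DPSPO-Char-1-11}) by explicit block multiplication; for $\Leftarrow$ it first gets $A\overset{\ast}{\leq}B$ from Theorem~\ref{RStar-Def}, writes $A_0,B_0$ in the block forms forced by existence of the DMPGIs, uses $A^{T}A_0=A^{T}B_0$ and $A_0A^{T}=B_0A^{T}$ to identify enough blocks of $B_0$ with those of $A_0$ to recover (\ref{DPSPO-Char-1-1}), and then applies Theorem~\ref{DPSPO-Char-1-Th} again. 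Your approach instead stays coordinate-free: you quote Theorem~\ref{DPSO-Char-3-Th} to rewrite the $P$-star relation as the system (\ref{DPSO-Char-3-Th-1}), then use $A^{\dag}A=A^{\dag}B$ and $AA^{\dag}=BA^{\dag}$ (from $A\overset{\ast}{\leq}B$) to collapse the $\varepsilon$-part equations to $A^{\dag}A_0=A^{\dag}B_0$ and $A_0A^{\dag}=B_0A^{\dag}$, and finally convert between $A^{\dag}$ and $A^{T}$ via the identities $A^{\dag}=(A^{T}A)^{\dag}A^{T}$ and $A^{T}=A^{T}AA^{\dag}$. This is shorter and avoids all SVD bookkeeping; the paper's route, on the other hand, keeps the argument self-contained within the canonical-form machinery already developed, which is consistent with how the surrounding theorems are proved. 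The alternative route you sketch at the end is essentially the paper's proof.
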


\begin{proof}
$\Rightarrow$
Since
 DMPGIs of $\widehat  A$ and $\widehat  B$    exist
and
$\widehat A\overset{\tiny\mbox{\rm  P\!-}\ast}\leq\widehat B$,
by using Theorem \ref{DPSPO-Char-1-Th} ,
we have
\begin{align}
\nonumber
\left\{
\begin{aligned}
  A^T A&
  =  A^TB
  =V\begin{pmatrix}T_1^{2}&0&0\\0&0&0\\0&0&0\end{pmatrix}V^T, \
  A A^T
  =B A^T
  =U\begin{pmatrix}T_1^{2}&0&0\\0&0&0\\0&0&0\end{pmatrix}U^T
   \\
 A^TA_0&
 = A^TB_0
 =V\begin{pmatrix}T_1A_1&T_1A_2&T_1A_3\\0&0&0\\0&0&0\end{pmatrix}V^T, \
  A_0A^T
  =B_0A^T
  =U\begin{pmatrix}A_1T_1&0&0\\A_4T_1&0&0\\A_7T_1&0&0\end{pmatrix}U^T.
 \end{aligned}
 \right.
\end{align}
It follows that
$A^TA_0= A^TB_0$
and
  $A_0A^T=B_0A^T$.
Therefore, we get (\ref{DPSPO-Char-1-11}).

$\Leftarrow$
Since
$ A^T A =  A^TB$
and
$  A A^T =B A^T$,
we obtain $ A\overset\ast\leq  B$.
Then applying Theorem  \ref{RStar-Def} ,
we get the forms of $A$ and $B$ as in  (\ref{2.1}).

Since  DMPGIs of $\widehat  A$ and $\widehat  B$   exist,
we write
\begin{align*}
A_0=U\begin{pmatrix}
A_1&  A_2&  A_3\\  A_4&0&0\\ A_7&0&0
\end{pmatrix}V^T,\;\;
B_0=U\begin{pmatrix}
 B_1&  B_2&  B_3\\  B_4&  B_5&  B_6\\ B_7&  B_8&0
 \end{pmatrix}V^T.
\end{align*}
Furthermore,
applying $ A^TA_0= A^TB_0$ and $ A_0A^T=B_0A^T$,
we get
$B_0=U\begin{pmatrix}
  A_1&  A_2&  A_3\\ A_4&  B_5&  B_6\\ A_7&  B_8&0
\end{pmatrix}V^T$.
By applying Theorem \ref{DPSPO-Char-1-Th},
it follows that
 $\widehat A\overset{\tiny\mbox{\rm  P\!-}\ast}\leq\widehat B$.
\end{proof}

\section{Relations among dual matrix  partial orders}
\label{Sect-5-Relationships-Star-Partial-Order}

From Theorem \ref{lemma-2.1} and Theorem \ref{MPDGI-Cha}
we see that DMPGI and MPDGI are closely related in  form.
Therefore,
D-star order induced by   DMPGI
 and
 P-star order induced by   MPDGI
 are also highly similar in form.
This is what the partial order in the real field does not have.
In this section,
we consider relationships among various types of partial orders of dual matrices.
These relations will
provide motivation for our follow-up research on matrix partial order theory
 in the real  field.

From the discussion in the above sections,
we can see that the discussion on P-star partial order and D-star  partial order
 is under the condition of the existence of DMPGI.
Therefore,
we suppose that the DMPGI of dual matrix discussed in this section exists.
Since the DMPGI of
 $\widehat A$  exists,
   the DMPGI of $\widehat A^\dag$ and $\widehat A^p$    exists.
 Therefore, we will not explain the existence of  DMPGI one by one later.


When  $\widehat  A^\dag = \widehat  A^p $,
the   D-star partial order is equivalent to the   P-star partial order.
However,in general, these two kinds of partial orders are not equivalent.
Here are some examples:
\begin{example}
 Let
 {\small
 \begin{align}
 \nonumber
\widehat A
&=
 \begin{pmatrix}
1&0&0\\0& 0&0\\0&0&0\end{pmatrix}
+\varepsilon
\begin{pmatrix}
1&1&1\\1& 0&0\\1&0&0
\end{pmatrix}, \
\widehat B
=
 \begin{pmatrix}
1&0&0\\0& 1&0\\0&0&0
\end{pmatrix}
+\varepsilon  \begin{pmatrix}
1&1&1\\1& 1&0\\1&0&0
\end{pmatrix},\
\widehat C
=
 \begin{pmatrix}
1&0&0\\0& 1&0\\0&0&0
\end{pmatrix}
+\varepsilon  \begin{pmatrix}
1&0&1\\0& 1&0\\1&0&0
\end{pmatrix}.
\end{align}}
Applying Theorem \ref{DSPO-Char-2-Th}
 and
 Theorem \ref{DPSPO-Char-1-Th},
 we have

 {\rm (1).}  $\widehat  A  \overset{\tiny\mbox{\rm  P\!-}\ast}\leq\widehat  B $,
and
  $\widehat  A$ is not below $\widehat  B$ under the partial order $\overset{\tiny\mbox{\rm  D\!-}\ast}\leq$;

 {\rm  (2).}  $\widehat  A  \overset{\tiny\mbox{\rm  D\!-}\ast}\leq\widehat  C $,
and
  $\widehat  A$ is not below $\widehat  C$ under the partial order $\overset{\tiny\mbox{\rm  P\!-}\ast}\leq$.
\end{example}

In the following theorems,
we further discuss  relationship between   D-star partial order and the   P-star partial order.
First,
 we consider characterizations of
$\widehat A\overset{\tiny\mbox{\rm  P\!-}\ast}\leq\widehat B$
when $\widehat A\overset{\tiny\mbox{\rm  D\!-}\ast}\leq\widehat B$ holds.

\begin{theorem}
\label{Two-between-PO}
Let DMPGIs of $\widehat  A=A+\varepsilon A_0$
and
$\widehat  B=B+\varepsilon B_0$
  exist;
$\widehat A\overset{\tiny\mbox{\rm  D\!-}\ast}\leq\widehat B$.
Then the following conditions are equivalent:
\begin{enumerate}
  \item[{\rm (1)}]
$\widehat A\overset{\tiny\mbox{\rm  P\!-}\ast}\leq\widehat B$;

  \item[{\rm (2)}]
There exist orthogonal matrices
$U$ and $V$
such that
{\small
\begin{align}
\label{DSPO-Char-5-1}
\left\{\begin{array}{l}\widehat A
=U\begin{pmatrix}
 T_1&0&0\\
0&0&0\\
0&0&0\end{pmatrix}V^{  T}
+\varepsilon U
\begin{pmatrix}
A_1&0&A_3\\
0&0&0\\
A_7&0&0
\end{pmatrix}V^{  T}
\\
\widehat B=
U\begin{pmatrix} T_1&0&0\\
0& T_2&0\\
0&0&0
\end{pmatrix}
V^{  T}
+\varepsilon U\begin{pmatrix}
A_1&0&A_3\\
0&  B_5&B_6\\A_7&B_8&0
  \end{pmatrix}V^{  T},
  \end{array}\right.
\end{align}}
where $T_1$ and $T_2$ are diagonal positive definite matrices;

  \item[{\rm (3)}]
$A_0^TA=A_0^TB$
and
  $ AA_0^T=B  A_0^T$;

  \item[{\rm (4)}]
  $\widehat A^\dagger\overset{\tiny\mbox{\rm  P\!-}\ast}\leq\widehat B^\dagger$.
\end{enumerate}
\end{theorem}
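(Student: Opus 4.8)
\noindent\emph{Proof proposal.} The plan is to use the joint $D$-star normal form of $\widehat A$ and $\widehat B$ supplied by Theorem~\ref{DSPO-Char-2-Th} as a common reference point, and to show that each of (1)--(4) is equivalent to the single requirement that the two off-diagonal $\varepsilon$-blocks $A_2$ and $A_4$ vanish, which is exactly statement (2). Since $\widehat A\overset{\tiny\mbox{\rm  D\!-}\ast}\leq\widehat B$ and the DMPGIs exist, I would fix orthogonal $U,V$ so that $\widehat A=A+\varepsilon A_0$ and $\widehat B=B+\varepsilon B_0$ are written as in (\ref{DSPO-Char-2}); in particular $A\overset\ast\leq B$, the matrices $A,B$ are as in (\ref{2.1}), the $\varepsilon$-part of $\widehat A$ has $(2,1)$-block $A_4$ and zero $(2,2)$-, $(2,3)$-, $(3,2)$-, $(3,3)$-blocks, and the off-diagonal $\varepsilon$-blocks of $\widehat B$ are $A_2-T_1^{-1}A_4^{  T}T_2$ and $A_4-T_2A_2^{  T}T_1^{-1}$. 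With $A,B,A_0,B_0$ written in these blocks, all the matrix products occurring in (1), (3), (4) are computed entrywise.

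\noindent\emph{Step 1: (1) $\Leftrightarrow$ (2) $\Leftrightarrow$ (3).} Because the DMPGIs of $\widehat A$ and $\widehat B$ exist, Theorem~\ref{DPSPO-Char-2-Th} says $\widehat A\overset{\tiny\mbox{\rm  P\!-}\ast}\leq\widehat B$ iff $A^{  T}A=A^{  T}B$, $AA^{  T}=BA^{  T}$, $A^{  T}A_0=A^{  T}B_0$ and $A_0A^{  T}=B_0A^{  T}$; the first two follow from $A\overset\ast\leq B$. Computing the remaining two in the normal form, $A^{  T}A_0=A^{  T}B_0$ collapses to the block identity $A_4^{  T}T_2=0$ and $A_0A^{  T}=B_0A^{  T}$ collapses to $T_2A_2^{  T}=0$; since $T_2$ is positive definite these read $A_4=0$ and $A_2=0$, i.e. (2). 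The same block bookkeeping shows that the two identities in (3), namely $A_0^{  T}A=A_0^{  T}B$ and $AA_0^{  T}=BA_0^{  T}$, reduce respectively to $A_4^{  T}T_2=0$ and $T_2A_2^{  T}=0$ as well, giving (3) $\Leftrightarrow$ (2), hence (1) $\Leftrightarrow$ (3).

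\noindent\emph{Step 2: (2) $\Leftrightarrow$ (4).} Because $\widehat A\overset{\tiny\mbox{\rm  D\!-}\ast}\leq\widehat B$, Theorem~\ref{DSPO-Char-4-Th} gives $\widehat A^{\dag}\overset{\tiny\mbox{\rm  D\!-}\ast}\leq\widehat B^{\dag}$ (the DMPGIs of $\widehat A^{\dag},\widehat B^{\dag}$ exist by the remark opening this section), so the equivalence (1) $\Leftrightarrow$ (2) just proved applies to the pair $\widehat A^{\dag},\widehat B^{\dag}$. Writing $\widehat A^{\dag}$ as in (\ref{3.6}) and $\widehat B^{\dag}$ as in (\ref{3-3-9}) — which is precisely the $D$-star normal form of the pair, with $U,V$ and $T_1,T_2$ replaced by $V,U$ and $T_1^{-1},T_2^{-1}$ — the two off-diagonal $\varepsilon$-blocks of $\widehat A^{\dag}$ are $T_1^{-2}A_4^{  T}$ and $A_2^{  T}T_1^{-2}$. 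Therefore $\widehat A^{\dag}\overset{\tiny\mbox{\rm  P\!-}\ast}\leq\widehat B^{\dag}$ is equivalent to $T_1^{-2}A_4^{  T}=0$ and $A_2^{  T}T_1^{-2}=0$; since $T_1$ is positive definite this is again $A_4=0$ and $A_2=0$, i.e. (2). This closes the cycle of equivalences.

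\noindent The arguments are routine once the normal form is fixed; I do not anticipate a genuine obstacle. The only points to watch are which off-diagonal block controls $A_2$ and which controls $A_4$, and the observation that the criterion of Theorem~\ref{DPSPO-Char-2-Th} for $(\widehat A,\widehat B)$ and its instance for $(\widehat A^{\dag},\widehat B^{\dag})$ both collapse (up to the invertible factors $T_1,T_2$) to the same pair of equations $A_4^{  T}T_2=0$, $T_2A_2^{  T}=0$. The main risk is arithmetic slips in the block products, so I would double-check the $(1,2)$- and $(2,1)$-entries of $A^{  T}B_0$, $B_0A^{  T}$, $A_0^{  T}B$, $BA_0^{  T}$, and of the $\varepsilon$-part of $\widehat B^{\dag}$ in (\ref{3-3-9}).
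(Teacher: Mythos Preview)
Your proposal is correct and follows the paper's approach: fix the $D$-star normal form (\ref{DSPO-Char-2}) and reduce each of (1), (3), (4) to the vanishing of the off-diagonal $\varepsilon$-blocks $A_2$ and $A_4$. The only notable difference is in handling (4)$\Leftrightarrow$(2): the paper applies Theorem~\ref{DPSPO-Char-1-Th} directly to $(\widehat A^\dagger,\widehat B^\dagger)$ written in the forms (\ref{3.6}) and (\ref{3-3-9}), whereas you observe that these formulas already constitute the $D$-star normal form of the pair $(\widehat A^\dagger,\widehat B^\dagger)$ and recycle Step~1 via Theorem~\ref{DSPO-Char-4-Th}---a small economy, but the block equations it unwinds are exactly those in the paper.
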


\begin{proof}
Since  DMPGIs of
 $\widehat  A$, $\widehat  B$   exist,
and $\widehat A\overset{\tiny\mbox{\rm  D\!-}\ast}\leq\widehat B$,
by applying Theorem \ref{DSPO-Char-2-Th},
we get the decomposition of $\widehat  A$ and $\widehat  B$
 as in
(\ref{DSPO-Char-2}).
Since DMPGIs of $\widehat  A, \widehat  B$   exist
and $\widehat A\overset{\tiny\mbox{\rm  P\!-}\ast}\leq\widehat B$,
 $\widehat  A$ and $\widehat  B$ have the forms as in  (\ref{DPSPO-Char-1-1}).

$\left(1\right)\Rightarrow\left(2\right)$:
\quad
When
$\widehat A\overset{\tiny\mbox{\rm  D\!-}\ast}\leq\widehat B$
and
$\widehat A\overset{\tiny\mbox{\rm  P\!-}\ast}\leq\widehat B$,
applying (\ref{DSPO-Char-2})
and
(\ref{DPSPO-Char-1-1}),
we get
\begin{align}
\nonumber
A_2-{{T}_1^{-1}}{    A_4^{  T}{T}_2}=A_2
\ \mbox{and}
\
A_4-{   {{T}}_2}A_2^{  T}{T}_1^{-1}=A_4.
\end{align}
Since ${T}_1$ and ${T}_2$ are invertible,
$ A_4=0$ and $A_2=0 $.
It follows from (\ref{DPSPO-Char-1-1}) that we get (2).

$\left(2\right)\Rightarrow\left(1\right)$:
\quad
When
  $\widehat A\overset{\tiny\mbox{\rm  D\!-}\ast}\leq\widehat B$
  and
   $ A_4=0$ and $A_2=0 $,
applying (\ref{DSPO-Char-2})
and
(\ref{DPSPO-Char-1-1}),
it is easy to check that
$\widehat A\overset{\tiny\mbox{\rm  P\!-}\ast}\leq\widehat B$.

$\left(2\right)\Rightarrow\left(3\right)$:
Applying (\ref{DSPO-Char-5-1})
gives
\begin{align}
\nonumber
\left\{\begin{array}{l}A_0\left(B^T-A^T\right)
=U\begin{pmatrix}
A_1&0&A_3\\0&0&0\\A_7&0&0
\end{pmatrix}
\begin{pmatrix}
0&0&0\\0&{T}_2&0\\0&0&0
\end{pmatrix}U^{  T}=0\\
\left(B^T-A^T\right)A_0
=U\begin{pmatrix}
0&0&0\\
0&{T}_2&0\\
0&0&0
\end{pmatrix}\begin{pmatrix}
A_1&0&A_3\\
0&0&0\\
A_7&0&0\end{pmatrix}U^{  T}
=0,
\end{array}\right.
\end{align}
that is,
$A_0^TA=A_0^TB$
and
  $ AA_0^T=B  A_0^T$.

 $\left(3\right)\Rightarrow\left(2\right)$:
Since the decompositions of $\widehat A$ and $\widehat B$ are as in
 (\ref{DSPO-Char-2}),
 $A_0^TA=A_0^TB$
and
  $ AA_0^T=B  A_0^T$,
we have
\begin{align}
\nonumber
\left\{\begin{array}{l}A_0\left(B^T-A^T\right)
=U\begin{pmatrix}
A_1&A_2&A_3\\A_4&0&0\\A_7&0&0
\end{pmatrix}
\begin{pmatrix}
0&0&0\\0&{T}_2&0\\0&0&0
\end{pmatrix}U^{  T}
=U\begin{pmatrix}
0&A_2{{{T}}_2}&0\\0&0&0\\0&0&0
\end{pmatrix}=0\\
\left(B^T-A^T\right)A_0
=U\begin{pmatrix}
0&0&0\\0&{T}_2&0\\0&0&0
\end{pmatrix}
\begin{pmatrix}
A_1&A_2&A_3\\A_4&0&0\\A_7&0&0
\end{pmatrix}U^{  T}
=
U\begin{pmatrix}
0&0&0\\{{ {T}}_2}A_4&0&0\\0&0&0
\end{pmatrix}U^{  T}=0.
\end{array}
\right.
\end{align}
Therefore,
$A_2=0$ and $A_4=0$.
It follows from (\ref{DSPO-Char-2}) that we get (2).

$\left(4\right)\Rightarrow\left(2\right)$
Since  DMPGIs of
 $\widehat  A$, $\widehat  B$  exist
and
$\widehat A\overset{\tiny\mbox{\rm  D\!-}\ast}\leq\widehat B$,
 $\widehat  A$ and $\widehat  B$ have the forms as in (\ref{DSPO-Char-2}).
Then  the forms of
$\widehat A^\dagger$ and $\widehat B^\dagger$
are as in
 (\ref{3.6}) and  (\ref{3-3-9}), respectively.
Applying Theorem \ref{DPSPO-Char-1-Th},
we have
\begin{align}
\nonumber
\left\{\begin{array}{l}
{T}_1^{-2}A_4^{  T}-{T}_1^{-1}A_2{T}_2^{-1}
=
{T}_1^{-2}A_4^{  T}
\\
 A_2^{  T}{T}_1^{-2}-{T}_2^{-1}A_4{T}_1^{-1}
= A_2^{  T}T_1^{-2}
\end{array}\right.
\Rightarrow
\left\{\begin{array}{l}A_2=0
\\
A_4=0\end{array}
\right.
\end{align}
It follows from (\ref{DPSPO-Char-1-1}) that we get (2).

$\left(2\right)\Rightarrow\left(4\right)$
 Applying $\widehat A\overset{\tiny\mbox{\rm  D\!-}\ast}\leq\widehat B$,
$A_2=0$, $A_4=0$,
(\ref{3.6}) and  (\ref{3-3-9}),
we get
\begin{align}
\nonumber
\left\{
\begin{aligned}
\widehat A^\dagger
&
=V\begin{pmatrix}
 T_1^{-1}&0&0
\\0&0&0
\\0&0&0\end{pmatrix}U^{  T}+\varepsilon V\begin{pmatrix}
-T_1^{-1}A_1T_1^{-1}&0& T_1^{-2}A_7^{  T}
\\   0&0&0\\   A_3^{  T}T_1^{-2}&0&0\end{pmatrix}U^{  T}
\\
\widehat B^\dagger
&=
V\begin{pmatrix}{T}_1^{-1}&0&0\\
0&{T}_2^{-1}&0\\
0&0&0\end{pmatrix}U^{  T}
+
\varepsilon V\begin{pmatrix}
-{T}_1^{-1}A_1{T}_1^{-1}&
0&
{T}_1^{-2}A_7^{  T}\\
0&
-{T}_2^{-1}B_5{T}_2^{-1}&{T}_2^{-1}B_8^{  T}\\
    A_3^{  T}{T}_1^{-2}&B_6^{  T}{T}_2^{-1}&0
\end{pmatrix}U^{  T}.
\end{aligned}
\right.
\end{align}
It follows from Theorem \ref{DPSPO-Char-1-Th}
that
$\widehat A^\dagger\overset{\tiny\mbox{\rm  P\!-}\ast}\leq\widehat B^\dagger$.
\end{proof}

 In Theorem
\ref{DSPO-Char-4-Th},
we see that
$\widehat A\overset{\tiny\mbox{\rm  D\!-}\ast}\leq\widehat B$
if and only if
$\widehat A^\dagger \overset{\tiny\mbox{\rm  D\!-}\ast}\leq\widehat B^\dagger$.
Conversely, the same is not true for the
 relationship between
  $\widehat A\overset{\tiny\mbox{\rm  P\!-}\ast} \leq\widehat B$
and
$ \widehat A^p \overset{\tiny\mbox{\rm  P\!-}\ast} \leq\widehat B^p $.
Let
\begin{align}
  \nonumber
\widehat A=\begin{pmatrix}1&0&0\\0&0&0\\0&0&0\end{pmatrix}
+\varepsilon\begin{pmatrix}1&2&3\\4&0&0\\7&0&0\end{pmatrix},\;
\widehat B=\begin{pmatrix}1&0&0\\0&2&0\\0&0&0\end{pmatrix}
+\varepsilon\begin{pmatrix}1&2&3\\4&-1&-2\\7&-3&0\end{pmatrix}.
\end{align}
Then
\begin{align}
\nonumber
 \widehat A^p=\begin{pmatrix}1&0&0\\0&0&0\\0&0&0\end{pmatrix}
 +\varepsilon\begin{pmatrix}-1&0&0\\0&0&0\\0&0&0\end{pmatrix}
\ ,
 \widehat B^p=
 \begin{pmatrix}1&0&0\\0&   \frac12&0\\0&0&0\end{pmatrix}
 +\varepsilon
 \begin{pmatrix}-1&-1&0\\-2&   \frac14&0\\0&0&0\end{pmatrix}.
\end{align}
Applying Theorem \ref{DPSPO-Char-1-Th},
we have $\widehat A\overset{\tiny\mbox{\rm  P\!-}\ast} \leq\widehat B$.

Since
\begin{align}
  \nonumber
\left(\widehat A^p\right)^p\widehat A^p
=
\begin{pmatrix}1&0&0\\0&0&0\\0&0&0
\end{pmatrix},
\;\;
\left(\widehat A^p\right)^p\widehat B^p
=
\begin{pmatrix}
1&0&0\\0&0&0\\0&0&0
\end{pmatrix}+
\varepsilon\begin{pmatrix}
0&-1&0\\0&0&0\\0&0&0
\end{pmatrix},
\end{align}
we get that
 $\left(\widehat A^p\right)^p\widehat A^p\neq \left(\widehat A^p\right)^p\widehat B^p$.
Therefore,
$\widehat A^p$ is   not below
$\widehat B^p$ under  the P-star partial order.

In the following theorem we consider characterizations of
$\widehat A^p \overset{\tiny\mbox{\rm  P\!-}\ast} \leq\widehat B^p $,
when $\widehat A\overset{\tiny\mbox{\rm  P\!-}\ast}\leq \widehat B$ holds.
\begin{theorem}
\label{DPSPO-Char-4-Th}
Let  DMPGIs of $\widehat  A=A+\varepsilon A_0$ and
$\widehat  B=B+\varepsilon B_0$
  exist,
and
$\widehat A\overset{\tiny\mbox{\rm  P\!-}\ast}\leq \widehat B$.
Then the following conditions are equivalent:
\begin{enumerate}
  \item[{\rm (1)}]
$\widehat A\overset{\tiny\mbox{\rm  D\!-}\ast}\leq\widehat B$;

  \item[{\rm (2)}]
$\widehat A^p \overset{\tiny\mbox{\rm  P\!-}\ast} \leq\widehat B^p $;

  \item[{\rm (3)}]
  $\widehat A$ and $\widehat B$ can be represented in the forms as in (\ref{DSPO-Char-5-1}).

  \item[{\rm (4)}]
$A_0^TA=A_0^TB$
and
  $ AA_0^T=B  A_0^T$.
\end{enumerate}
\end{theorem}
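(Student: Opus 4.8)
The plan is to fix one SVD-adapted basis and to reduce all four conditions to the single elementary requirement that two blocks of the dual part of $\widehat A$ vanish. Since $\widehat A\overset{\tiny\mbox{\rm  P\!-}\ast}\leq\widehat B$ and the DMPGIs exist, Theorem~\ref{DPSPO-Char-1-Th} (together with Theorem~\ref{lemma-2.1}) supplies orthogonal $U,V$ for which $\widehat A$ and $\widehat B$ have the form (\ref{DPSPO-Char-1-1}), with blocks $A_1,A_2,A_3,A_4,A_7,B_5,B_6,B_8$ and diagonal positive definite $T_1,T_2$. Write $(P)$ for the statement ``$A_2=0$ and $A_4=0$''. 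I would prove $(1)\Leftrightarrow(P)$, $(4)\Leftrightarrow(P)$, $(2)\Leftrightarrow(P)$ and $(3)\Leftrightarrow(P)$; the theorem follows. Note first that $(P)\Rightarrow(3)$ is immediate, since under $(P)$ the decomposition (\ref{DPSPO-Char-1-1}) is literally (\ref{DSPO-Char-5-1}); conversely (\ref{DSPO-Char-5-1}) is the $A_2=A_4=0$ specialization of (\ref{DSPO-Char-2}), so Theorem~\ref{DSPO-Char-2-Th} gives $(3)\Rightarrow(1)$, and then $(1)\Rightarrow(P)$ (below) closes the loop $(3)\Rightarrow(P)$.

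For $(1)\Leftrightarrow(P)$ I would compute $\widehat A^{\dagger}$ from (\ref{DPSPO-Char-1-1}) by Theorem~\ref{lemma-2.1} and then form $\widehat A^{\dagger}\widehat A,\ \widehat A^{\dagger}\widehat B,\ \widehat A\widehat A^{\dagger},\ \widehat B\widehat A^{\dagger}$ in the $U,V$-blocks. The real parts coincide because $A\overset\ast\leq B$, while the dual parts of $\widehat A^{\dagger}\widehat A=\widehat A^{\dagger}\widehat B$ and of $\widehat A\widehat A^{\dagger}=\widehat B\widehat A^{\dagger}$ differ only by $T_1^{-2}A_4^{T}T_2$ in the $(1,2)$-block and by $T_2A_2^{T}T_1^{-2}$ in the $(2,1)$-block, so these identities hold precisely when $A_2=A_4=0$. (Alternatively, $(1)$ together with the standing hypothesis $\widehat A\overset{\tiny\mbox{\rm  P\!-}\ast}\leq\widehat B$ puts one in the setting of Theorem~\ref{Two-between-PO}, whose implication $(1)\Rightarrow(2)$ yields (\ref{DSPO-Char-5-1}).) For $(4)\Leftrightarrow(P)$ I would evaluate $A_0^{T}(B-A)$ and $(B-A)A_0^{T}$ in the blocks of (\ref{DPSPO-Char-1-1}): the only possibly nonzero entries are the $(1,2)$-block $A_4^{T}T_2$ of the first and the $(2,1)$-block $T_2A_2^{T}$ of the second, so $A_0^{T}A=A_0^{T}B$ and $AA_0^{T}=BA_0^{T}$ hold exactly when $A_2=A_4=0$; this is the argument $(2)\Leftrightarrow(3)$ of Theorem~\ref{Two-between-PO}.

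The genuinely new step is $(2)\Leftrightarrow(P)$. Using (\ref{MPDGI-Def}) and Theorem~\ref{MPDGI-Cha}, in the basis of (\ref{DPSPO-Char-1-1}) the MPDGI $\widehat A^{p}$ has dual part with only the $(1,1)$-block $-T_1^{-1}A_1T_1^{-1}$, while the dual part of $\widehat B^{p}$ carries, in its leading $2\times2$ block, the sub-blocks $-T_1^{-1}A_1T_1^{-1},\,-T_1^{-1}A_2T_2^{-1},\,-T_2^{-1}A_4T_1^{-1},\,-T_2^{-1}B_5T_2^{-1}$. Next, applying (\ref{MPDGI-Def}) to $\widehat A^{p}=A^{\dagger}+\varepsilon(-A^{\dagger}A_0A^{\dagger})$, i.e. with $X=A^{\dagger}$ and using $(A^{\dagger})^{\dagger}=A$, gives $(\widehat A^{p})^{p}=A+\varepsilon\,AA^{\dagger}A_0A^{\dagger}A$, whose $U,V$-blocks are $T_1$ (real part) and $A_1$ (dual part), both in the $(1,1)$-slot. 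Substituting these into $(\widehat A^{p})^{p}\widehat A^{p}=(\widehat A^{p})^{p}\widehat B^{p}$ and $\widehat A^{p}(\widehat A^{p})^{p}=\widehat B^{p}(\widehat A^{p})^{p}$ and comparing dual parts, everything cancels except the $(1,2)$-block $A_2T_2^{-1}$ and the $(2,1)$-block $T_2^{-1}A_4$; hence $(2)$ is equivalent to $A_2=A_4=0$, i.e. to $(P)$.

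The main obstacle is the bookkeeping in this last step: three partitions must be handled consistently — the $a\times a$ one used to form $\widehat A^{p}$, the $b\times b$ one used to form $\widehat B^{p}$, and their common $3\times3$ refinement (block sizes $a$, $b-a$, $n-b$) in which the comparisons are carried out — and one must apply (\ref{MPDGI-Def}) to $\widehat A^{p}$ itself, with $X=A^{\dagger}$ and $(A^{\dagger})^{\dagger}=A$, rather than to $\widehat A$. Once the forms of $\widehat A^{p},\ \widehat B^{p},\ (\widehat A^{p})^{p}$ are in hand, every verification is routine block multiplication of exactly the kind already performed in the proofs of Theorems~\ref{DSPO-Char-2-Th}, \ref{DPSPO-Char-1-Th} and \ref{Two-between-PO}.
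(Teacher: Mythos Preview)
Your proposal is correct and follows essentially the same route as the paper: fix the $U,V$ basis from Theorem~\ref{DPSPO-Char-1-Th} and reduce each of (1)--(4) to the vanishing of the blocks $A_2$ and $A_4$. The only notable difference is in the treatment of (2). The paper writes down $\widehat A^{p}$ and $\widehat B^{p}$ in block form and then invokes the block characterization of the P-star order (Theorem~\ref{DPSPO-Char-1-Th}) on this new pair, reading off $A_2=A_4=0$ from the off-diagonal blocks of the dual part of $\widehat B^{p}$. You instead compute $(\widehat A^{p})^{p}=A+\varepsilon\,AA^{\dagger}A_0A^{\dagger}A$ explicitly and verify the defining equalities $(\widehat A^{p})^{p}\widehat A^{p}=(\widehat A^{p})^{p}\widehat B^{p}$ and $\widehat A^{p}(\widehat A^{p})^{p}=\widehat B^{p}(\widehat A^{p})^{p}$ by hand. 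Both arguments are the same block computation; the paper's version is slightly more economical since it reuses an earlier theorem, while yours is self-contained and avoids having to check that the hypotheses of Theorem~\ref{DPSPO-Char-1-Th} (existence of DMPGIs for $\widehat A^{p},\widehat B^{p}$) are met.
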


\begin{proof}
Since
the DMPGIs of
 $\widehat  A$ and $\widehat  B$   exist,
 and $\widehat A\overset{\tiny\mbox{\rm  P\!-}\ast}\leq \widehat B$,
by applying
Theorem \ref{DPSPO-Char-1-Th} ,
we get that   $\widehat  A$ and $\widehat  B$
have the forms as in (\ref{DPSPO-Char-1-1}).

$\left(1\right)\Rightarrow\left(3\right)$:
\quad
Since
$\widehat A\overset{\tiny\mbox{\rm  P\!-}\ast}\leq \widehat B$,
applying Theorem \ref{DSPO-Char-2-Th}
and $\widehat A\overset{\tiny\mbox{\rm  D\!-}\ast}\leq\widehat B$,
we get
$A_2-{ T_1^{-1}}{  A_4^{  T}T_2}=A_2$ and
$A_4-{ T_2}{  A_2^{  T}}{ T_1^{-1}}=A_4$.
Therefore,
we get (3).

$\left(2\right)\Rightarrow\left(3\right)$:
\quad
Applying
(\ref{DPSPO-Char-1-1}),
we have
\begin{align}
\left\{\begin{array}{l}
\widehat A^p
=V\begin{pmatrix}
T_1^{-1}&0&0\\0&0&0\\0&0&0
\end{pmatrix}U^T
+\varepsilon V
\begin{pmatrix}
-{  T_1^{-1}}A_1{  T_1^{-1}}&0&0\\0&0&0\\0&0&0
\end{pmatrix}U^T\\
\widehat B^p
=V\begin{pmatrix}  T_1^{-1}&0&0\\0&  T_2^{-1}&0\\0&0&0
\end{pmatrix}U^{\mathrm T}
+\varepsilon V\begin{pmatrix}
-{  T_1^{-1}}A_1{  T_1^{-1}}&-{  T_1^{-1}}A_2{  T_2^{-1}}&0
\\-{  T_2^{-1}}A_4{  T_1^{-1}}&-{  T_2^{-1}}B_5{  T_2^{-1}}&0
\\0&0&0
\end{pmatrix}U^{\mathrm T}.
\end{array}\right.
\end{align}

Applying
Theorem \ref{DPSPO-Char-1-Th}
gives  $\widehat A^p \overset{\tiny\mbox{\rm  P\!-}\ast} \leq\widehat B^p $,
then we obtain
$A_2=0$ and $A_4=0$.
Therefore,
applying Thereom \ref{DPSPO-Char-1-Th} gives  (3).

$\left(3\right)\Rightarrow\left(4\right)$,
$\left(3\right)\Rightarrow\left(2\right)$ and
$\left(3\right)\Rightarrow\left(1\right)$:
\quad
By applying (3),  it is easy to check that
$\widehat A\overset{\tiny\mbox{\rm  D\!-}\ast}\leq\widehat B$,
$\widehat A^p \overset{\tiny\mbox{\rm  P\!-}\ast} \leq\widehat B^p $,
$A_0^TA=A_0^TB$
and
  $ AA_0^T=B  A_0^T$.

$\left(4\right)\Rightarrow\left(3\right)$:
\quad
Since
$\widehat A\overset{\tiny\mbox{\rm  P\!-}\ast}\leq \widehat B$,
$\widehat A$ and $\widehat B$ are of the forms  as in (\ref{DPSPO-Char-1-1}).
From  $A_0^TA=A_0^TB$
and
  $ AA_0^T=B  A_0^T$,
  it follows that
 $ A_2=0$ and $A_4=0 $.
  Therefore, we get (3).
\end{proof}

Applying Theorem \ref{DPSPO-Char-2-Th},
 Theorem \ref{Two-between-PO}
and Theorem \ref{DPSPO-Char-4-Th},
we obtain Theorem \ref{DPSPO-Char-3-Th}.

\begin{theorem}
 \label{DPSPO-Char-3-Th}
Let DMPGIs of  $\widehat  A=A+\varepsilon A_0$
and
$\widehat  B=B+\varepsilon B_0$   exist.
Then the following conditions are equivalent:
\begin{enumerate}
  \item[{\rm (1)}]
$\widehat A\overset{\tiny\mbox{\rm  D\!-}\ast}\leq\widehat B$
 and
 $\widehat A \overset{\tiny\mbox{\rm  P\!-}\ast}\leq\widehat B$;

 \item[{\rm (2)}]
$\widehat A\overset{\tiny\mbox{\rm  D\!-}\ast}\leq\widehat B$
 and
$\widehat A^\dagger\overset{\tiny\mbox{\rm  P\!-}\ast}\leq\widehat B^\dagger$.

  \item[{\rm (3)}]
$\widehat A\overset{\tiny\mbox{\rm  P\!-}\ast}\leq \widehat B$
and
$\widehat A^p \overset{\tiny\mbox{\rm  P\!-}\ast} \leq\widehat B^p $;

  \item[{\rm (4)}]

  $A^T A =  A^TB$,
  $A A^T =B A^T$,
  $ A^TA_0= A^TB_0$,
  $  A_0A^T=B_0A^T$,\\
  $  A_0^TA=A_0^TB$,
  and
  $   AA_0^T=B  A_0^T $.

  \item[{\rm (5)}]
  $\widehat A$ and $\widehat B$ can be represented in the forms as in (\ref{DSPO-Char-5-1}).
\end{enumerate}
\end{theorem}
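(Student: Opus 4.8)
The plan is to obtain Theorem~\ref{DPSPO-Char-3-Th} as a bookkeeping assembly of Theorems~\ref{DPSPO-Char-2-Th}, \ref{Two-between-PO} and \ref{DPSPO-Char-4-Th}. Each of the five conditions splits into a statement forcing $A\overset\ast\leq B$ together with conditions on the dual parts $A_0,B_0$, and the three cited theorems already provide the pairwise equivalences among those dual-part conditions. The standing hypothesis that the DMPGIs of $\widehat A$ and $\widehat B$ exist is exactly what those theorems require; the only thing to watch is that, wherever Theorem~\ref{Two-between-PO} or Theorem~\ref{DPSPO-Char-4-Th} is invoked, the order relation it takes as a hypothesis ($\widehat A\overset{\tiny\mbox{\rm  D\!-}\ast}\leq\widehat B$ for the former, $\widehat A\overset{\tiny\mbox{\rm  P\!-}\ast}\leq\widehat B$ for the latter) is already among the conditions currently assumed.

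First I would handle $(1)\Leftrightarrow(4)$. By Theorem~\ref{DPSPO-Char-2-Th}, $\widehat A\overset{\tiny\mbox{\rm  P\!-}\ast}\leq\widehat B$ is equivalent to the first four equalities of (4). For $(1)\Rightarrow(4)$, condition (1) supplies $\widehat A\overset{\tiny\mbox{\rm  P\!-}\ast}\leq\widehat B$ (hence those four equalities) and $\widehat A\overset{\tiny\mbox{\rm  D\!-}\ast}\leq\widehat B$, so the implication $(1)\Rightarrow(3)$ of Theorem~\ref{Two-between-PO} yields the remaining equalities $A_0^TA=A_0^TB$ and $AA_0^T=BA_0^T$. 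For $(4)\Rightarrow(1)$, the first four equalities give $\widehat A\overset{\tiny\mbox{\rm  P\!-}\ast}\leq\widehat B$ by Theorem~\ref{DPSPO-Char-2-Th}; with that in hand, the last two equalities are condition (4) of Theorem~\ref{DPSPO-Char-4-Th}, equivalent to its condition (1), namely $\widehat A\overset{\tiny\mbox{\rm  D\!-}\ast}\leq\widehat B$, so both orders hold.

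Next I would close the remaining equivalences around (1). For $(1)\Leftrightarrow(5)$: under (1) both orders hold, so $(1)\Leftrightarrow(2)$ of Theorem~\ref{Two-between-PO} gives the decomposition (\ref{DSPO-Char-5-1}); conversely (\ref{DSPO-Char-5-1}) is the case $A_2=0$, $A_4=0$ of (\ref{DSPO-Char-2}), so Theorem~\ref{DSPO-Char-2-Th} gives $\widehat A\overset{\tiny\mbox{\rm  D\!-}\ast}\leq\widehat B$, and then $(2)\Rightarrow(1)$ of Theorem~\ref{Two-between-PO} gives $\widehat A\overset{\tiny\mbox{\rm  P\!-}\ast}\leq\widehat B$. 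For $(1)\Leftrightarrow(2)$: both (1) and (2) contain $\widehat A\overset{\tiny\mbox{\rm  D\!-}\ast}\leq\widehat B$, so under that hypothesis the equivalence $(1)\Leftrightarrow(4)$ of Theorem~\ref{Two-between-PO} identifies $\widehat A\overset{\tiny\mbox{\rm  P\!-}\ast}\leq\widehat B$ with $\widehat A^\dagger\overset{\tiny\mbox{\rm  P\!-}\ast}\leq\widehat B^\dagger$. For $(1)\Leftrightarrow(3)$: both (1) and (3) contain $\widehat A\overset{\tiny\mbox{\rm  P\!-}\ast}\leq\widehat B$, so under that hypothesis the equivalence $(1)\Leftrightarrow(2)$ of Theorem~\ref{DPSPO-Char-4-Th} identifies $\widehat A\overset{\tiny\mbox{\rm  D\!-}\ast}\leq\widehat B$ with $\widehat A^p\overset{\tiny\mbox{\rm  P\!-}\ast}\leq\widehat B^p$. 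Together these give $(2)\Leftrightarrow(1)\Leftrightarrow(3)$, $(1)\Leftrightarrow(4)$, $(1)\Leftrightarrow(5)$.

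There is no delicate computation in this argument; the work already lives in Theorems~\ref{DPSPO-Char-2-Th}, \ref{Two-between-PO}, \ref{DPSPO-Char-4-Th}. The only real obstacle is the logical threading: one must pick the hub (condition (1), which carries both orders) and, in the $(4)\Rightarrow(1)$ and $(5)\Rightarrow(1)$ directions, first bootstrap one of the two orders from Theorem~\ref{DPSPO-Char-2-Th} or Theorem~\ref{DSPO-Char-2-Th} before the companion equivalence theorem becomes applicable.
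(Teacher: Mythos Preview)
Your proposal is correct and is essentially the paper's own approach: the paper simply states that the result follows by applying Theorems~\ref{DPSPO-Char-2-Th}, \ref{Two-between-PO} and \ref{DPSPO-Char-4-Th}, and you have carried out exactly that bookkeeping in detail (with the additional, harmless appeal to Theorem~\ref{DSPO-Char-2-Th} for the $(5)\Rightarrow(1)$ direction).
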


\


Let
\begin{align}
\nonumber
\widehat A=\begin{pmatrix}1&0&0\\0&0&0\\0&0&0
\end{pmatrix}+
\varepsilon\begin{pmatrix}1&2&3\\4&0&0\\7&0&0\end{pmatrix},\;
\widehat B=\begin{pmatrix}1&0&0\\0&2&0\\0&0&0\end{pmatrix}+
\varepsilon\begin{pmatrix}1&-6&3\\0&-2&-1\\7&-3&0\end{pmatrix}.
\end{align}
Applying Theorem \ref{DSPO-Char-2-Th},
it is easy to check that
$\widehat A\overset{\tiny\mbox{\rm  D\!-}\ast}\leq\widehat B$.
Since
\begin{align}
\nonumber
\widehat A^p
&=\begin{pmatrix}1&0&0\\0&0&0\\0&0&0\end{pmatrix}+
\varepsilon\begin{pmatrix}-1&0&0\\0&0&0\\0&0&0\end{pmatrix},\;
\widehat B^p=\begin{pmatrix}1&0&0\\0&   \frac12&0\\0&0&0\end{pmatrix}
+\varepsilon\begin{pmatrix}-1&3&0\\0&   \frac12&0\\0&0&0\end{pmatrix},
\\
\nonumber
\;\left(\widehat A^p\right)^\dagger\widehat A^p
&=\begin{pmatrix}
1&0&0\\0&0&0\\0&0&0
\end{pmatrix}
+\varepsilon\begin{pmatrix}
0&0&0\\0&0&0\\0&0&0
\end{pmatrix},\;\;
\left(\widehat A^p\right)^\dagger\widehat B^p
=\begin{pmatrix}1&0&0\\0&0&0\\0&0&0
\end{pmatrix}+
\varepsilon\begin{pmatrix}
0&3&0\\0&0&0\\0&0&0
\end{pmatrix},
\end{align}
we get $ \left(\widehat A^p\right)^\dagger\widehat A^p\neq\;
\left(\widehat A^p\right)^\dagger\widehat B^p$.
It follows from Theorem \ref{DSPO-Char-2-Th} that
 $\widehat A^p$ is not below $\widehat B^p$
under the D-star partial order.
Therefore,
  $\widehat A\overset{\tiny\mbox{\rm  D\!-}\ast}\leq\widehat B
\nRightarrow
 \widehat A^p\overset{\tiny\mbox{\rm  D\!-}\ast}\leq\widehat B^p$.

  \begin{theorem}
\label{DSPO-MODGI-S-3}
Let  DMPGIs of $\widehat  A=A+\varepsilon A_0$
and
$\widehat  B=B+\varepsilon B_0$
  exist.
If
  $\widehat A\overset{\tiny\mbox{\rm  D\!-}\ast}\leq\widehat B$,
then the following conditions are equivalent:
\begin{enumerate}
\item[{\rm (1)}]
$ \widehat A^p\overset{\tiny\mbox{\rm  D\!-}\ast}\leq\widehat B^p$;

\item[{\rm (2)}]
There exist orthogonal matrices $U$ and $V$
such that
{\small
\begin{align}
\label{DSPO-Char-5-3}
\left\{\begin{array}{l}\widehat A
=U\begin{pmatrix}
 T_1&0&0\\
0&0&0\\
0&0&0\end{pmatrix}V^{  T}
+\varepsilon U
\begin{pmatrix}
A_1&A_2&A_3\\
A_4&0&0\\
A_7&0&0
\end{pmatrix}V^{  T}
\\
\widehat B=
U\begin{pmatrix} T_1&0&0\\
0& T_2&0\\
0&0&0
\end{pmatrix}
V^{  T}
+\varepsilon U\begin{pmatrix}
A_1&0&A_3\\
0&  B_5&B_6\\
A_7&B_8&0
  \end{pmatrix}V^{  T},
  \end{array}
  \right.
\end{align}}
where $T_1$ and $T_2$ are diagonal positive definite matrices.

  \item[{\rm (3)}]
$BB_0^TA=AB_0^TA$
and $AB_0^TB=AB_0^TA$;
\end{enumerate}
\end{theorem}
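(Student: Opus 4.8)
The plan is to fix a single SVD-type decomposition coming from $\widehat A\overset{\tiny\mbox{\rm  D\!-}\ast}\leq\widehat B$, compute the two dual generalized inverses that occur in the statement, and then show that each of (1), (2), (3) is equivalent to one and the same pair of matrix identities, whence they are equivalent to one another.

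First I would invoke Theorem \ref{DSPO-Char-2-Th}: since $\widehat A\overset{\tiny\mbox{\rm  D\!-}\ast}\leq\widehat B$ and the DMPGIs exist, there are orthogonal $U,V$ giving $\widehat A,\widehat B$ the forms (\ref{DSPO-Char-2}). Plugging these into Theorem \ref{MPDGI-Cha} produces explicit expressions for $\widehat A^p$ and $\widehat B^p$: their real parts are $A^\dagger=V\,\mathrm{diag}(T_1^{-1},0,0)\,U^{T}$ and $B^\dagger=V\,\mathrm{diag}(T_1^{-1},T_2^{-1},0)\,U^{T}$, the dual coefficient of $\widehat A^p$ is concentrated in the leading $a\times a$ block ($=-T_1^{-1}A_1T_1^{-1}$), and that of $\widehat B^p$ lives in the leading $b\times b$ block, with $(1,2)$-block $T_1^{-2}A_4^{T}-T_1^{-1}A_2T_2^{-1}$ and $(2,1)$-block $A_2^{T}T_1^{-2}-T_2^{-1}A_4T_1^{-1}$. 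Because the dual part of $\widehat A^p$ sits in one diagonal block, Theorem \ref{lemma-2.1} gives at once $(\widehat A^p)^\dagger=U\,\mathrm{diag}(T_1,0,0)\,V^{T}+\varepsilon\,U\,\mathrm{diag}(A_1,0,0)\,V^{T}$.

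To handle $(1)\Leftrightarrow(2)$ I would substitute these into Definition \ref{DStarPartialOrder-Def} for the pair $(\widehat A^p,\widehat B^p)$: block multiplication makes $(\widehat A^p)^\dagger\widehat A^p$ and $\widehat A^p(\widehat A^p)^\dagger$ purely real, while the dual parts of $(\widehat A^p)^\dagger\widehat B^p$ and $\widehat B^p(\widehat A^p)^\dagger$ collapse to the single off-diagonal blocks $T_1^{-1}A_4^{T}-A_2T_2^{-1}$ and $A_2^{T}T_1^{-1}-T_2^{-1}A_4$; hence $\widehat A^p\overset{\tiny\mbox{\rm  D\!-}\ast}\leq\widehat B^p$ holds exactly when $A_2=T_1^{-1}A_4^{T}T_2$ and $A_4=T_2A_2^{T}T_1^{-1}$. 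These two identities are precisely what forces the $(1,2)$- and $(2,1)$-blocks $A_2-T_1^{-1}A_4^{T}T_2$ and $A_4-T_2A_2^{T}T_1^{-1}$ of the dual coefficient of $\widehat B$ in (\ref{DSPO-Char-2}) to vanish, i.e. for $\widehat A,\widehat B$ to take the form (\ref{DSPO-Char-5-3}); so $(1)\Leftrightarrow(2)$. For $(2)\Leftrightarrow(3)$ I would compute directly: substituting (\ref{DSPO-Char-5-3}) shows $BB_0^{T}A$, $AB_0^{T}A$ and $AB_0^{T}B$ all equal $U\,\mathrm{diag}(T_1A_1^{T}T_1,0,0)\,V^{T}$, giving $(2)\Rightarrow(3)$; conversely, rewriting (3) in the decomposition (\ref{DSPO-Char-2}) turns $BB_0^{T}A=AB_0^{T}A$ into $T_2(A_2^{T}T_1-T_2A_4)=0$ and $AB_0^{T}B=AB_0^{T}A$ into $(T_1A_4^{T}-A_2T_2)T_2=0$, which by invertibility of $T_1,T_2$ are the same pair of identities, so $(3)\Rightarrow(2)$.

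The substantive part is the block-matrix bookkeeping in the previous paragraph — correctly forming $\widehat A^p$, $\widehat B^p$, $(\widehat A^p)^\dagger$ and all the products while tracking transposes and which of $U,V$ flanks each expression; every step is routine, but there are many of them, and a sign or transpose slip would break the chain. One point that deserves a line of care is the quantifier ``there exist orthogonal $U,V$'' in (2): since (3) is coordinate-free, it is enough to verify all the equivalences for the single pair $U,V$ fixed at the outset, and that is exactly what the computations above do.
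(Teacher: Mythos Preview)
Your proposal is correct and follows essentially the same route as the paper: fix the decomposition (\ref{DSPO-Char-2}) coming from $\widehat A\overset{\tiny\mbox{\rm  D\!-}\ast}\leq\widehat B$, compute $\widehat A^p$ and $\widehat B^p$ in block form, and reduce each of (1), (2), (3) to the same pair of identities $A_2=T_1^{-1}A_4^{T}T_2$ and $A_4=T_2A_2^{T}T_1^{-1}$. The only cosmetic difference is that for $(1)\Leftrightarrow(2)$ the paper applies Theorem \ref{DSPO-Char-2-Th} to the pair $(\widehat A^p,\widehat B^p)$ instead of computing $(\widehat A^p)^\dagger$ and checking Definition \ref{DStarPartialOrder-Def} directly, but the resulting block conditions are identical.
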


\begin{proof}
Since
  $\widehat A\overset{\tiny\mbox{\rm  D\!-}\ast}\leq\widehat B$,
by using Theorem \ref{DSPO-Char-2-Th},
we get the  $\widehat  A$ and $\widehat  B$ are of forms as in (\ref{DSPO-Char-2}) ,
and
\begin{align}
 \label{3.3-2}
\left\{\begin{array}{l}
BB_0^TA
=U\begin{pmatrix}
{T}_1A_1^{ T}T_1&{0}&{0}\\
{T}_2^2A_4-T_2A_2^{ T}T_1&0&0\\
{0}&0&0
\end{pmatrix}V^T,\;
AB_0^TA
=U\begin{pmatrix}
{T}_1A_1^{  T}T_1&{0}&{0}\\{0}&0&0\\{0}&0&0
\end{pmatrix}V^T\\
AB_0^TB
=U\begin{pmatrix}
{T}_1A_1^{  T}T_1&  A_2T_2^2-T_1A_4^{  T}T_2&{0}\\{0}&0&0\\{0}&0&0
\end{pmatrix}V^T,\;
AB_0^TA=U\begin{pmatrix}{T}_1A_1^{ T}T_1&{0}&{0}\\{0}&0&0\\{0}&0&0
\end{pmatrix}
V^T.
\end{array}\right.
\end{align}

$\left(1\right)\Rightarrow\left(2\right)$
\quad
Applying (\ref{DSPO-Char-2}) and  Theorem \ref{lemma-2.1},
we get
\begin{align}
\nonumber
\left\{\begin{array}{l}\widehat A^p
=
V\begin{pmatrix}
{T}_1^{-1}&0&0\\0&0&0\\0&0&0
\end{pmatrix}U^T
+\varepsilon V\begin{pmatrix}
-{{T}_1^{-1}}A_1{{T}_1^{-1}}&0&0\\0&0&0\\0&0&0
\end{pmatrix}U^T
\\
\widehat B^p=
V\begin{pmatrix}
{T}_1^{-1}&0&0\\0&{T}_2^{-1}&0\\0&0&0
\end{pmatrix}U^T+
\varepsilon V\begin{pmatrix}
-{{T}_1^{-1}}A_1{{T}_1^{-1}}&
{{T}_1^{-2}}{ A_4^{ T}}-{{T}_1^{-1}}A_2{{T}_2^{-1}}&0\\
 A_2^{ T}\Sigma_1^{-2}-T_2^{-1}A_4T_1^{-1}&-{{T}_2^{-1}}B_5{{T}_2^{-1}}&0\\
 0&0&0\end{pmatrix}U^T.\end{array}
 \right.
\end{align}

Applying Theorem \ref{DSPO-Char-2-Th},
we obtain
${{T}_1^{-2}}{ A_4^{ T}}-{{T}_1^{-1}}A_2{{T}_2^{-1}}=0$
and
$A_2^{ T}T_1^{-2}-T_2^{-1}A_4T_1^{-1}=0$,
that is,
$A_2-{ T_1^{-1}}{  A_4^{  T}T_2}=0$ and $A_4-{ T_2}{  A_2^{  T}}{ T_1^{-1}}=0$.
It  froms (\ref{3.3-2})
that (\ref{DSPO-Char-5-3}) holds.

$\left(2\right)\Rightarrow\left(1\right)$:
 Applying (\ref{DSPO-Char-5-3})
gives
\begin{align}
\nonumber
\left\{\begin{array}{l}\widehat A^p
=
V\begin{pmatrix}
{T}_1^{-1}&0&0\\0&0&0\\0&0&0
\end{pmatrix}U^T
+\varepsilon V\begin{pmatrix}
-{{T}_1^{-1}}A_1{{T}_1^{-1}}&0&0\\0&0&0\\0&0&0
\end{pmatrix}U^T
\\
\widehat B^p=
V\begin{pmatrix}
{T}_1^{-1}&0&0\\0&{T}_2^{-1}&0\\0&0&0
\end{pmatrix}U^T+
\varepsilon V\begin{pmatrix}
-{{T}_1^{-1}}A_1{{T}_1^{-1}}&0&0\\
 0&-{{T}_2^{-1}}B_5{{T}_2^{-1}}&0\\
 0&0&0\end{pmatrix}U^T.\end{array}\right.
\end{align}
Then applying  Theorem \ref{DSPO-Char-2-Th}
gives
$ \widehat A^p\overset{\tiny\mbox{\rm  D\!-}\ast}\leq\widehat B^p $.

$\left(2\right)\Leftrightarrow\left(3\right)$:
 Applying (\ref{3.3-2}) ,
we obtain that
$BB_0^TA=AB_0^TA$
and $AB_0^TB=AB_0^TA$
if and only if
${T}_2^2A_4-T_2A_2^{ T}T_1=0$ and  $A_2T_2^2-T_1A_4^{  T}T_2=0$
if and only if
$A_2-{ T_1^{-1}}{  A_4^{  T}T_2}=0$ and $A_4-{ T_2}{  A_2^{  T}}{ T_1^{-1}}=0$.
Therefore,
applying Theorem \ref{DSPO-Char-2-Th} ,
we get $\left(2\right)\Leftrightarrow\left(3\right)$.
\end{proof}

Let
\begin{align}
  \nonumber
\widehat A=\begin{pmatrix}1&0&0\\0&0&0\\0&0&0\end{pmatrix}
+\varepsilon\begin{pmatrix}1&2&3\\4&0&0\\7&0&0\end{pmatrix},\;
\widehat B=\begin{pmatrix}1&0&0\\0&2&0\\0&0&0\end{pmatrix}
+\varepsilon\begin{pmatrix}1&2&3\\4&-1&-2\\7&-3&0\end{pmatrix}.
\end{align}
Applying Theorem \ref{DPSPO-Char-1-Th} ,
we can easily check that $\widehat A\overset{\tiny\mbox{\rm  P\!-}\ast} \leq\widehat B$.

Applying Theorem \ref{RStar-Def}, we have
\begin{align}
  \nonumber
\widehat A^\dagger
=\begin{pmatrix}
1&0&0\\0&0&0\\0&0&0
\end{pmatrix}
+\varepsilon
\begin{pmatrix}
-1&4&7\\2&0&0\\3&0&0
\end{pmatrix},\;
\widehat B^\dagger
=\begin{pmatrix}
1&0&0\\0&   \frac12&0\\0&0&0
\end{pmatrix}
+\varepsilon
\begin{pmatrix}
-1&3&7\\0&0&0\\3&0&0
\end{pmatrix}.
\end{align}

Since
\begin{align}
  \nonumber
\left(\widehat A^\dagger\right)^p\widehat A^\dagger
=\begin{pmatrix}
1&0&0\\0&0&0\\0&0&0
\end{pmatrix}
+
\varepsilon\begin{pmatrix}
0&4&7\\0&0&0\\0&0&0
\end{pmatrix},\;\;
\left(\widehat A^\dagger\right)^p\widehat B^\dagger
=
\begin{pmatrix}
1&0&0\\0&0&0\\0&0&0\end{pmatrix}
+\varepsilon
\begin{pmatrix}
0&3&7\\0&0&0\\0&0&0\end{pmatrix},
\end{align}
we obtain
$\left(\widehat A^\dagger\right)^p\widehat A^\dagger\neq\;
\left(\widehat A^\dagger\right)^p\widehat B^\dagger$.
Therefore,
$\widehat A^{\dag}$ is   not below $\widehat B^{\dag}$ under
the P-star partial order.
Therefore,  $\widehat A\overset{\tiny\mbox{\rm  P\!-}\ast} \leq\widehat B
\nRightarrow
 \widehat A^\dag \overset{\tiny\mbox{\rm  P\!-}\ast} \leq\widehat B^\dag $.

  \begin{theorem}
\label{DSPO-MODGI-S-4}
Let  DMPGIs of $\widehat  A=A+\varepsilon A_0$
and
$\widehat  B=B+\varepsilon B_0$
  exist.
If
  $\widehat A\overset{\tiny\mbox{\rm  P\!-}\ast} \leq\widehat B$,
then the following conditions are equivalent:
\begin{enumerate}
\item[{\rm (1)}]
$\widehat A^\dag \overset{\tiny\mbox{\rm  P\!-}\ast} \leq\widehat B^\dag$;

\item[{\rm (2)}]
There exist orthogonal matrices $U$ and $V$
such that
{\small
\begin{align}
\label{DSPO-MODGI-S-5}
\left\{\begin{array}{l}\widehat A
=U\begin{pmatrix}
 T_1&0&0\\
0&0&0\\
0&0&0\end{pmatrix}V^{  T}
+\varepsilon U
\begin{pmatrix}
A_1&A_2&A_3\\
A_4&0&0\\
A_7&0&0
\end{pmatrix}V^{  T}
\\
\widehat B=
U\begin{pmatrix} T_1&0&0\\
0& T_2&0\\
0&0&0
\end{pmatrix}
V^{  T}
+\varepsilon U\begin{pmatrix}
A_1&A_2&A_3\\
A_4&  B_5&B_6\\
A_7&B_8&0
  \end{pmatrix}V^{  T},
  \end{array}
  \right.
\end{align}}
where $T_1$ and $T_2$ are diagonal positive definite matrices,
$A_4T_1+T_2A_2^T=0$
and
$A_4^T T_2+T_1 A_2 =0$.
\end{enumerate}
\end{theorem}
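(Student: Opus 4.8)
The plan is to translate both sides into explicit block-matrix identities, via Theorem~\ref{DPSPO-Char-1-Th}, Theorem~\ref{lemma-2.1} and the coordinate-free description of the P-star order in Theorem~\ref{DPSPO-Char-2-Th}, in the spirit of the proof of Theorem~\ref{DSPO-MODGI-S-3}. Since the DMPGIs of $\widehat A=A+\varepsilon A_0$ and $\widehat B=B+\varepsilon B_0$ exist and $\widehat A\overset{\tiny\mbox{\rm  P\!-}\ast}\leq\widehat B$, Theorem~\ref{DPSPO-Char-1-Th} supplies orthogonal $U,V$ putting $\widehat A,\widehat B$ in the form (\ref{DPSPO-Char-1-1}); this is already the shape needed in (\ref{DSPO-MODGI-S-5}), except that the two supplementary relations $A_4T_1+T_2A_2^{T}=0$ and $A_4^{T}T_2+T_1A_2=0$ are not yet available. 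Applying Theorem~\ref{lemma-2.1} to these decompositions gives $\widehat A^{\dagger}=A^{\dagger}+\varepsilon R$, which is precisely (\ref{3.6}), and — note that $\widehat B$ is here in the P-star shape, not the D-star shape of (\ref{DSPO-Char-2}), so (\ref{3-3-9}) is not directly available and must be recomputed — the same formula applied to $\widehat B$ yields $\widehat B^{\dagger}=B^{\dagger}+\varepsilon R_B$, with $B^{\dagger}=V\,\mathrm{diag}(T_1^{-1},T_2^{-1},0)\,U^{T}$ and $R_B$ the $3\times3$ block matrix whose $(1,1)$ and $(2,2)$ blocks are $-T_1^{-1}A_1T_1^{-1}$ and $-T_2^{-1}B_5T_2^{-1}$, whose $(1,2)$ and $(2,1)$ blocks are $-T_1^{-1}A_2T_2^{-1}$ and $-T_2^{-1}A_4T_1^{-1}$, whose $(1,3),(2,3),(3,1),(3,2)$ blocks are $T_1^{-2}A_7^{T},\,T_2^{-2}B_8^{T},\,A_3^{T}T_1^{-2},\,B_6^{T}T_2^{-2}$, and whose $(3,3)$ block is zero. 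Because the DMPGI of $\widehat A$ exists, so do the DMPGIs of $\widehat A^{\dagger}$ and $\widehat B^{\dagger}$ (as observed at the beginning of Section~\ref{Sect-5-Relationships-Star-Partial-Order}), hence Theorem~\ref{DPSPO-Char-2-Th} applies to the pair $\widehat A^{\dagger},\widehat B^{\dagger}$ and rephrases $\widehat A^{\dagger}\overset{\tiny\mbox{\rm  P\!-}\ast}\leq\widehat B^{\dagger}$ as the four identities $(A^{\dagger})^{T}A^{\dagger}=(A^{\dagger})^{T}B^{\dagger}$, $A^{\dagger}(A^{\dagger})^{T}=B^{\dagger}(A^{\dagger})^{T}$, $(A^{\dagger})^{T}R=(A^{\dagger})^{T}R_B$ and $R(A^{\dagger})^{T}=R_B(A^{\dagger})^{T}$.

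Next I would check that the first two identities are automatic: with $U^{T}U=I$, $V^{T}V=I$ and the block forms of $A^{\dagger},B^{\dagger}$, both $(A^{\dagger})^{T}A^{\dagger}$ and $(A^{\dagger})^{T}B^{\dagger}$ collapse to $U\,\mathrm{diag}(T_1^{-2},0,0)\,U^{T}$ (the $T_2^{-1}$ block of $B^{\dagger}$ is annihilated), and likewise $A^{\dagger}(A^{\dagger})^{T}=B^{\dagger}(A^{\dagger})^{T}=V\,\mathrm{diag}(T_1^{-2},0,0)\,V^{T}$; equivalently, $A\overset\ast\leq B$ already forces $A^{\dagger}\overset\ast\leq B^{\dagger}$ in this coordinatization. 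The content is thus in the last two. Carrying out the multiplications, $(A^{\dagger})^{T}R$ and $(A^{\dagger})^{T}R_B$ are $m\times m$ matrices all of whose block rows vanish except the first, with first block rows $(-T_1^{-2}A_1T_1^{-1},\,T_1^{-3}A_4^{T},\,T_1^{-3}A_7^{T})$ and $(-T_1^{-2}A_1T_1^{-1},\,-T_1^{-2}A_2T_2^{-1},\,T_1^{-3}A_7^{T})$; so equality is equivalent to $T_1^{-3}A_4^{T}=-T_1^{-2}A_2T_2^{-1}$, i.e.\ $A_4^{T}T_2+T_1A_2=0$. Symmetrically, $R(A^{\dagger})^{T}$ and $R_B(A^{\dagger})^{T}$ are $n\times n$ matrices with only a nonzero first block column, and comparing their $(2,1)$ blocks gives $A_2^{T}T_1^{-3}=-T_2^{-1}A_4T_1^{-2}$, i.e.\ $A_4T_1+T_2A_2^{T}=0$. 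Hence $\widehat A^{\dagger}\overset{\tiny\mbox{\rm  P\!-}\ast}\leq\widehat B^{\dagger}$ holds if and only if both supplementary relations hold, which is exactly the assertion that $\widehat A,\widehat B$ can be written in the form (\ref{DSPO-MODGI-S-5}); conversely, substituting (\ref{DSPO-MODGI-S-5}) back into the four conditions of Theorem~\ref{DPSPO-Char-2-Th} verifies them, giving $(2)\Rightarrow(1)$. This closes the equivalence.

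The only genuine difficulty is bookkeeping. One must remember that here $\widehat B$ sits in the P-star form (\ref{DPSPO-Char-1-1}), so $\widehat B^{\dagger}$ has to be derived afresh from Theorem~\ref{lemma-2.1} rather than quoted from (\ref{3-3-9}); and the several $3\times3$ block products must be performed with the correct partition sizes ($a,\ b-a,\ m-b$ on the $U$-side and $a,\ b-a,\ n-b$ on the $V$-side), so that the first block row and first block column are read off correctly. Once $R$ and $R_B$ have been pinned down, the remaining manipulation is routine matrix arithmetic, and the two constraints $A_4T_1+T_2A_2^{T}=0$, $A_4^{T}T_2+T_1A_2=0$ drop out as the only surviving conditions.
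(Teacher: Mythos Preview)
Your proof is correct and follows essentially the same route as the paper: use Theorem~\ref{DPSPO-Char-1-Th} to put $\widehat A,\widehat B$ into the P-star block form (\ref{DPSPO-Char-1-1}), compute $\widehat A^{\dagger}$ and $\widehat B^{\dagger}$ explicitly from Theorem~\ref{lemma-2.1} (the paper records the latter as (\ref{DPSPO-Char-1-14})), and then read off the two scalar block identities. The only cosmetic difference is that the paper checks $\widehat A^{\dagger}\overset{\tiny\mbox{\rm  P\!-}\ast}\leq\widehat B^{\dagger}$ by comparing the block decompositions directly against the template of Theorem~\ref{DPSPO-Char-1-Th}, whereas you route through the product criteria of Theorem~\ref{DPSPO-Char-2-Th}; both lead to the same pair of conditions $T_1^{-2}A_4^{T}+T_1^{-1}A_2T_2^{-1}=0$ and $A_2^{T}T_1^{-2}+T_2^{-1}A_4T_1^{-1}=0$, which are equivalent to $A_4^{T}T_2+T_1A_2=0$ and $A_4T_1+T_2A_2^{T}=0$.
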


\begin{proof}
Since
the DMPGIs of
 $\widehat  A$ and $\widehat  B$  exist,
and $\widehat A\overset{\tiny\mbox{\rm  P\!-}\ast}\leq \widehat B$,
by using
Theorem \ref{DPSPO-Char-1-Th}
we get that  $\widehat  A$ and $\widehat  B$
are as in (\ref{DPSPO-Char-1-1}),
$\widehat A^\dagger$ is of the form as in (\ref{3.6}),
and
\begin{align}
\label{DPSPO-Char-1-14}
\widehat B^\dagger
=
U\begin{pmatrix}
  T_1^{-1}&0&0\\0&  T_2^{-1}&0\\0&0&0
\end{pmatrix}V^{  T}
+\varepsilon U\begin{pmatrix}
-T_1^{-1}A_1T_1^{-1}&-T_1^{-1}A_2T_2^{-1}& T_1^{-2}A_7^T\\
-T_2^{-1}A_4T_1^{-1}&-T_2^{-1}B_5T_2^{-1}&T_2^{-2} B_8^T\\
A_3^T T_1^{-2}&B_6^TT_2^{-2} &0
\end{pmatrix}V^{  T}.
\end{align}

$\left(1\right)\Rightarrow\left(2\right)$
\quad
If
$\widehat A^\dag \overset{\tiny\mbox{\rm  P\!-}\ast} \leq\widehat B^\dag$,
then
applying Theorem \ref{DPSPO-Char-1-Th} ,
we get
$T_1^{-2}A_4^{  T}+T_1^{-1}A_2T_2^{-1}=0$
and
$A_2^{  T}T_1^{-2}+T_2^{-1}A_4T_1^{-1}=0$.
Therefore,
we get (2).

$\left(2\right)\Rightarrow\left(1\right)$
\quad
If $\widehat A$ and $\widehat B$ are given as in (\ref{DSPO-MODGI-S-5}),
it is easy to check that  (1) holds.
\end{proof}


\section*{Disclosure statement}
No potential conflict of interest was reported by the authors.

\section*{Funding}
This work was supported partially by the
  Guangxi Natural Science Foundation [grant number 2018GXNSFDA281023],
National Natural Science Foundation of China [grant number 12061015],
Xiangsihu Young Scholars Innovative Research Team of Guangxi Minzu University
 [grant number 2019RSCXSHQN03]
and
Thousands of Young and Middle-aged Key Teachers Training Programme in Guangxi Colleges and Universities [grant number GUIJIAOSHIFAN2019-81HAO].

\section*{References}

\end{document}